\documentclass[11pt,a4paper]{article}

\usepackage{lmodern}
\usepackage{amsmath,amssymb,amsthm,mathtools}
\usepackage{geometry}
\usepackage{xcolor}            
\usepackage[normalem]{ulem}    
\usepackage{hyperref}
\hypersetup{colorlinks=true, linkcolor=blue, citecolor=blue}
\usepackage{enumitem}
\usepackage{booktabs}
\usepackage{authblk}

\mathtoolsset{showonlyrefs}

\newtheorem{theorem}{Theorem}[section]
\newtheorem{proposition}[theorem]{Proposition}
\newtheorem{lemma}[theorem]{Lemma}
\newtheorem{corollary}[theorem]{Corollary}
\theoremstyle{definition}
\newtheorem{definition}[theorem]{Definition}
\newtheorem{remark}[theorem]{Remark}
\newtheorem{example}[theorem]{Example}

\newcommand{\R}{\mathbb{R}}

\newcommand{\cL}{\mathcal{L}}

\begin{document}

\title{\bfseries The Financial Bubble Model\\ with L\'evy Jump Processes\thanks{R. Barkhudaryan and V. Khalatyan were supported by the Higher Education and Science Committee of the MESCS RA under Research Project No.~25RL-1A040.}}

\author[2,1,3]{Avetik Arakelyan\thanks{ \texttt{email: avetik.arakelyan@ysu.am}}}
\author[1,2]{Rafayel Barkhudaryan\thanks{\texttt{email: barkhudaryan@ysu.am}}}
\author[1]{Vigen Khalatyan\thanks{\texttt{email: v.khalatyan@ysu.am}}}
\author[1]{Michael Poghosyan\thanks{\texttt{email: michael@ysu.am}}}

\affil[1]{Yerevan State University, Yerevan, Armenia}
\affil[2]{Institute of Mathematics of NAS RA, Yerevan, Armenia}
\affil[3]{Center for Scientific Innovation and Education, Yerevan, Armenia}

\date{}

\maketitle

\begin{abstract}

In this work we consider an extension of the Berestycki--Monneau--Scheinkman (BMS) model \cite{BMS14} for speculative financial bubbles, in which the investor disagreement process is allowed to have jumps. While the original BMS framework assumes that disagreement evolves along continuous paths, our model accounts for sudden shifts in market sentiment through an independent L\'evy jump process. Using optimal stopping theory and the It\^o--L\'evy formula, we show that the speculative bubble premium satisfies a non-local partial integro-differential equation (PIDE) with a moving obstacle. We develop a viscosity solution theory for this non-local obstacle problem. We prove a comparison principle by a doubling-of-variables argument adapted to the non-local jump integral, and we prove existence and uniqueness of the bubble price by Perron's method, constructing explicit continuous sub- and supersolutions. We then introduce a monotone Implicit--Explicit finite difference scheme for the bubble premium. Following the Barles--Souganidis framework, we show that the discrete operator preserves the M-matrix property and that the scheme converges locally uniformly to the unique viscosity solution under a state-dependent Courant--Friedrichs--Lewy (CFL) condition. At the end of the paper we implement the scheme via a PSOR--Picard algorithm and present numerical tests for four finite- and infinite-activity L\'evy models.
\end{abstract}

\vspace{1em}

\section{Introduction}

Speculative asset bubbles, from the 17th-century Tulip Mania to the dot-com boom and the more recent cryptocurrency episodes, have repeatedly been associated with macroeconomic crises. The behavioral origins of these phenomena are well documented in the economic literature \cite{Kindleberger2005,Shiller2015}, but describing the mechanics of bubble formation within a mathematical framework remains a difficult problem.

One line of research, due to Jarrow, Protter, and their co-authors \cite{JarrowProtter2007,JarrowProtter2010}, works in a classical arbitrage-free setting. Here a bubble is defined as the difference between the market price and the fundamental value, and it is described mathematically as a strict local martingale under the risk-neutral measure. Such models are well suited for statistical detection and for arbitrage-free derivative pricing, but they mainly describe the explosion of the price path rather than the mechanism that generates the overvaluation.

To describe the formation of bubbles, we follow instead the equilibrium approach, which is based on investor disagreement and on limits to arbitrage. It was introduced by Miller \cite{Miller1977} and formalized by Harrison and Kreps \cite{HarrisonKreps1978}, and it explains a bubble through the so-called resale option. When short-selling is constrained, the owner of an asset is willing to pay a premium above the fundamental dividend value, because he holds an American-style option to resell the asset to a more optimistic investor in the future. This mechanism was carried over to continuous-time diffusions by Scheinkman and Xiong \cite{ScheinkmanXiong2003}, and was further studied by Chen and Kohn \cite{ChenKohn2011}. Berestycki, Monneau, and Scheinkman (BMS) \cite{BMS14} reduced this trading mechanism to a macroscopic model and showed that the bubble value function satisfies a parabolic obstacle partial differential equation (PDE), whose free boundary is the optimal trading threshold between the two groups of agents. For the numerical treatment of such problems we refer to \cite{Arakelyan2019,MR3552318}.

The BMS framework and its extensions assume that the disagreement process evolves along continuous diffusion paths. Continuous paths, however, do not capture the sudden shocks that are present in financial markets. Empirical studies indicate that market sentiment, liquidity crises, and macroeconomic news arrive as abrupt jumps, which produce heavy-tailed return distributions that cannot be obtained from a pure diffusion \cite{ContTankov2004}.

In this paper we generalize the heterogeneous beliefs model by letting the disagreement process be driven by an independent L\'evy jump process. Passing from a pure diffusion to a jump-diffusion allows for sudden shifts in market optimism. The formulation covers a number of L\'evy models used in financial engineering, such as Merton's model \cite{Merton1976}, the Variance Gamma (VG) model \cite{Madan1998}, the Normal Inverse Gaussian (NIG) model \cite{BarndorffNielsen1997}, and the double exponential model of Kou \cite{Kou2002}.

The introduction of L\'evy jumps changes the pricing problem. Applying the It\^o--L\'evy formula to the optimal stopping problem leads to a non-local partial integro-differential equation (PIDE) with a moving obstacle. The non-local integral operator, which accounts for the expected change of the bubble value at a jump of the disagreement, destroys the local character of the differential operator. A related problem was studied by Wehowar \cite{Wehowar2018}, who extended the Chen and Kohn framework to a L\'evy setting and observed that classical $C^{2,1}$ solutions need not exist, due to the separation of the continuation and exercise regions by the free boundary.

To handle this we use the theory of viscosity solutions. Viscosity solutions for second-order PDEs were established by Crandall, Ishii, and Lions \cite{Crandall1992}; their extension to non-local PIDEs requires a careful treatment of the integral singularity, as in Alvarez and Tourin \cite{Alvarez1996} and Barles and Imbert \cite{BI08}, and was applied to financial obstacle problems by Cont and Voltchkova \cite{ContVoltchkova2005}.

The main contribution of the paper is the adaptation of the doubling-of-variables argument and the Jensen--Ishii lemma to this non-local obstacle problem. By bounding the jump measure through suitable penalty functions, we prove a comparison principle. We then apply Perron's method, with explicit supersolutions that dominate the non-local L\'evy integral, to prove existence and uniqueness of the bubble price.

The rest of the paper is organized as follows. In Section \ref{sec:bubble-levy} we describe the heterogeneous beliefs model and derive the non-local obstacle PIDE by semimartingale calculus. In Section \ref{sec:viscosity} we introduce the viscosity solution framework and prove its stability under relaxed limits. In Section \ref{sec:comparison} we prove the comparison principle for the non-local operator. In Section \ref{sec:existence} we construct the sub- and supersolution barriers and prove existence and uniqueness of the bubble price by Perron's method. In Section \ref{sec:convergence} we introduce a monotone Implicit--Explicit  finite difference scheme and prove its convergence to the unique viscosity solution, by an extension of the Barles--Souganidis framework. Finally, in Section \ref{sec:simulations} we present numerical simulations for several market scenarios and discuss the results.

\section{The Mathematical Model}\label{sec:bubble-levy}
It is convenient to cast both standard derivative pricing and the heterogeneous beliefs model into a single optimal stopping framework.

Let us define a value function $V(x, t)$ for an asset, where
$t \in [0, T]$ denotes \emph{calendar time} ($t=0$ is the initial trading date,
$t=T$ is maturity). The dynamics of the asset's value rely on an underlying
stochastic state variable $g_t$. Assuming risk-neutral valuation and a constant
continuous discount rate $r > 0$, the general optimal stopping problem is
governed by the following master equation
\begin{equation}\label{main-sde}
    V(g, t) = \sup_{\tau \in [t, T]} \mathbb{E}\!\left[ e^{-r(\tau-t)} \Phi(g_\tau, \tau) \;\Big|\; g_t = g \right],
\end{equation}
where $\tau$ is a stopping time in $[t, T]$, and $\Phi(g, t)$ is the obstacle, that is the immediate payoff obtained when the trade is executed at time $\tau$.


\subsection{Parameters and the Payoff Function}

The payoff obtained from reselling the asset depends on the disagreement variable $g_t$ and on a deterministic time profile $\alpha(t)$. We assume that $\alpha \in C^1([0,T])$, $\alpha(t) \geq 0$ for all $t \in [0,T]$, and $\alpha(T) = 0$. In addition, we assume that $\alpha$ is non-increasing, that is $\alpha'(t) \leq 0$; this is convenient for the construction of the supersolution, and it reflects the fact that the resale premium decays as maturity approaches.

Let $c > 0$ denote the fixed transaction cost of a trade. The net payoff $\psi(g,t)$ at exercise is the linear function
\begin{equation}\label{eq:payoff}
  \psi(g,t) := g\alpha(t) - c.
\end{equation}
This choice provides the complementarity condition needed for the well-posedness of the free-boundary problem. Indeed, the sum of the payoffs of the two groups at any state $g$ is a negative constant
\begin{equation}\label{eq:obstacle-sum}
  \psi(g,t) + \psi(-g,t) = \bigl(g\alpha(t) - c\bigr) + \bigl(-g\alpha(t) - c\bigr) = -2c < 0.
\end{equation}
This condition guarantees that the two groups cannot find it optimal to exercise their resale options at the same time, so that the exercise regions are disjoint and no instantaneous trading loop can occur.

The disagreement process $g_t$ is governed by the mean-reversion rate $\rho \geq 0$, the diffusion volatility $\sigma > 0$, and the L\'evy jump measure $\nu(dz)$. Together with the discount rate $r > 0$, the stability of the model rests on a growth condition rather than on smallness of the parameters. As will be made precise in the viscosity analysis below, the rates $(r, \rho)$ must be large enough to dominate the linear growth of the state-dependent jump variance.
We further assume that the state-dependent jump amplitude $\gamma(g,z)$ is continuous with respect to $g$ and for given $T > 0$ satisfies the following conditions

\begin{equation}\label{eq:param}
\begin{gathered}
    \int_{\mathbb{R}\setminus\{0\}} |\gamma(x,z)-\gamma(y,z)|^2 \,\nu(dz) \leq L_\gamma^2\,|x-y|^2, \quad \forall x,y \in \mathbb{R}, \\[2ex]
    \int_{\mathbb{R}\setminus\{0\}} |\gamma(x,z)|^2 \,\nu(dz) < \infty, \quad \forall x \in \mathbb{R}, \\[2ex]
    \sup_{0<|z|\leq1}\frac{|\gamma(g,z)|}{|z|} \leq K_1\bigl(1+|g|\bigr), \quad \forall g\in\mathbb{R}, \text{ for some } K_1>0.
\end{gathered}
\end{equation}

\subsection{Derivation of the PIDE}

In the heterogeneous beliefs model \cite{HarrisonKreps1978,ScheinkmanXiong2003}, the price premium of the asset (the bubble) comes from the option to resell the asset to the complementary group of investors. The state variable $g_t$ is the difference in optimism between the two groups.

The payoff at exercise (reselling the asset) is not a fixed constant. Following Chen and Kohn \cite{ChenKohn2011} and Berestycki, Monneau, and Scheinkman (BMS) \cite{BMS2014,BMS14}, the seller receives an immediate cash flow net of transaction costs, $\psi(g_\tau, \tau)$, plus the continuation value of the bubble seen from the new buyers, that is at the state $-g_\tau$.

Hence the payoff is coupled to the solution itself,
\[
\Phi_{\text{Bubble}}(g_\tau, \tau) = [V_{\text{Bubble}}(-g_\tau, \tau) + \psi(g_\tau, \tau)]^+.
\]
Applying this to \eqref{main-sde} yields the bubble valuation equation
\[
V_{\text{Bubble}}(g, t) = \sup_{\tau \in [t, T]} \mathbb{E}\!\left[ e^{-r(\tau-t)} \left[ V_{\text{Bubble}}(-g_\tau, \tau) + \psi(g_\tau, \tau) \right]^+ \;\Big|\; g_t = g \right].
\]

\begin{theorem}[Bubble non-local PIDE]\label{thm:FK}
The bubble value defined above
\[
 u(g,t):= \sup_{\tau \in [t, T]} \mathbb{E}\!\left[ e^{-r(\tau-t)} \left[ u(-g_\tau, \tau) + \psi(g_\tau, \tau) \right]^+ \;\Big|\; g_t = g \right]
\]
satisfies the non-local obstacle PIDE
\begin{equation}\label{eq:bubble-pide}
  \min\!\bigl(-\mathcal{L}^\nu u(g,t),\;\;
  u(g,t)-u(-g,t)-\psi(g,t)\bigr) = 0,
  \qquad u(g,T)=0,
\end{equation}
where the forward operator is
\[
  \mathcal{L}^\nu u := u_t + \mathcal{A}u - ru,
\]
\[
  \mathcal{A}u := -\rho g\,u_g + \tfrac{\sigma^2}{2}u_{gg}
  + \int_{\R\setminus\{0\}}\bigl[u(g{+}\gamma)-u(g)-\gamma\,u_g\,\mathbf{1}_{\{|z|\le1\}}
  \bigr]\nu(dz).
\]
and $\gamma \equiv \gamma(g,z)$ represents the state-dependent jump amplitude such that  $\int_{\R\setminus\{0\}}\! |\gamma|^2 \,\nu(dz)<\infty.$
\end{theorem}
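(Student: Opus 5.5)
This is a formal derivation in the Feynman--Kac spirit; at this point in the paper only a heuristic/classical version is expected, with the rigorous meaning supplied by the viscosity framework later. We assume $u$ is smooth enough ($C^{2,1}$, polynomial growth) that the It\^o--L\'evy formula applies. The dynamics are
\[
dg_t=-\rho g_t\,dt+\sigma\,dW_t+\int_{|z|\le1}\gamma(g_{t-},z)\,\tilde N(dt,dz)+\int_{|z|>1}\gamma(g_{t-},z)\,N(dt,dz),
\]
so that $\mathcal A$ is the generator of $g$. The proof then follows the standard dynamic-programming proof of the variational inequality for optimal stopping, with $\Phi(g,t)=[u(-g,t)+\psi(g,t)]^+$ treated as a given obstacle for the fixed function $u$.

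First we get the inequality $u\ge u(-g,t)+\psi(g,t)$ by taking $\tau=t$. This gives $u(g,t)\ge \Phi(g,t)\ge u(-g,t)+\psi(g,t)$. The terminal condition holds because at $t=T$ the only admissible stopping time is $\tau=T$. Hence $u(g,T)=[u(-g,T)-c]^+$, since $\alpha(T)=0$. Applying the same identity at $-g$ gives $u(-g,T)=[u(g,T)-c]^+$. If $u(g,T)>0$, then $u(g,T)=u(-g,T)-c\le u(g,T)-2c$, which is a contradiction. Therefore $u(\cdot,T)=0$. This is exactly the role of \eqref{eq:obstacle-sum}.

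Second, we show $-\mathcal L^\nu u\ge 0$ everywhere using the supermartingale property. By the dynamic programming principle, $e^{-rs}u(g_s,s)$ is a supermartingale. Applying It\^o--L\'evy to $e^{-r(s-t)}u(g_s,s)$ on $[t,t+h\wedge\theta]$, where $\theta$ is the exit time from a ball, gives drift $e^{-r(s-t)}\mathcal L^\nu u(g_s,s)$. The remaining terms are a Brownian stochastic integral and a compensated-jump integral. The compensated-jump integral is a true martingale because $\int|\gamma|^2\nu(dz)<\infty$ and $u_g$ is locally bounded. For the $|z|\le1$ part we use the bound $|\gamma|\le K_1(1+|g|)|z|$ from \eqref{eq:param} and a second-order Taylor expansion. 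This makes the integrand of $\mathcal A$ integrable: it is $O(|\gamma|^2)$ near $0$ and controlled by growth of $u$ for large jumps. Taking expectations, dividing by $h$ and letting $h\downarrow0$ yields $\mathcal L^\nu u(g,t)\le0$.

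Third, we show equality in the continuation region and the min relation. On $\mathcal C=\{u>\Phi\}$, the optimal stopping time is $\tau^*=\inf\{s\ge t: u(g_s,s)=\Phi(g_s,s)\}$, and the stopped process $e^{-r(s\wedge\tau^*-t)}u(g_{s\wedge\tau^*},s\wedge\tau^*)$ is a martingale. Since $\mathcal C$ is open (by continuity) and $\tau^*>t$ a.s. from a point of $\mathcal C$, the same small-time expansion gives $\mathcal L^\nu u=0$ there. On $\{u=\Phi\}$ we need the second term of the min to vanish. We have $u=\Phi\ge u(-g)+\psi$. If $\Phi=0>u(-g)+\psi$, then $u=0$, and the non-negativity of $u$ forces $(g,t)$ to be a minimum. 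This region must be excluded or handled, and here is a subtlety: we argue that $u\equiv0$ on a neighborhood gives $\mathcal L^\nu u=\int u(g+\gamma)\nu\ge0$. Combined with the second step this forces $\mathcal L^\nu u=0$, so the min is still $0$. Otherwise $u=u(-g)+\psi$. In every case, therefore, $\min(-\mathcal L^\nu u,\;u-u(-g)-\psi)=0$.

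The main obstacle is regularity. The It\^o--L\'evy formula needs $u\in C^{2,1}$, which generally fails across the free boundary, as noted after \cite{Wehowar2018}. The self-referential obstacle also makes even the dynamic programming principle delicate. I would first try to do the computation with smooth test functions touching $u$ from above and below. That gives exactly the viscosity inequalities of Section~\ref{sec:viscosity} and reduces the rigorous statement to the viscosity sense. A secondary technical point is justifying the limit $h\downarrow0$ inside the non-local term. Here I would split the integral into $|z|\le1$, using the Taylor bound and $K_1$, and $|z|>1$, using dominated convergence with the linear growth of $u$ and $\int|\gamma|^2\nu<\infty$.
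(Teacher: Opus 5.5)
Your proposal is correct at the same formal level as the paper (assuming $u\in C^{2,1}$). It follows essentially the paper's route: apply the It\^o--L\'evy formula to $e^{-r(s-t)}u(g_s,s)$ to read off the drift $\mathcal{L}^\nu u$, then use martingale behaviour in the continuation region and supermartingale behaviour to obtain the min-condition.

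You are more careful than the paper on three points:
\begin{itemize}
\item You derive $u\ge u(-g,t)+\psi$ from the choice $\tau=t$.
\item You obtain $u(\cdot,T)=0$ from the coupled identity $u(g,T)=[u(-g,T)-c]^+$ and $c>0$, whereas the paper simply writes $\max(0,-c)$.
\item You treat the set where the positive part in $[u(-g)+\psi]^+$ is active. There, the first- and second-order conditions at the global minimum $u=0$ already give $\mathcal{L}^\nu u\ge 0$, so your extra claim that $u\equiv0$ on a neighbourhood is not needed.
\end{itemize}
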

\begin{proof}
We give the derivation in detail, following the It\^o--L\'evy
product rule and the resulting variational inequality
(for the general theory of optimal stopping for jump-diffusions, see
\cite[Chapter 3]{OksendalSulem2007} and \cite{ContTankov2004}). The process
$g_s$ is Markov, since both the driving Brownian motion and the L\'evy
process have independent increments. To describe the structure of
these increments, we use the L\'evy--It\^o decomposition
(see \cite[Chapter 2]{Applebaum2009}), by which the
process $g_t$ is written as the sum of a continuous drift, a Brownian
motion, and the jumps,
\begin{equation*}
  dg_t = -\rho\,g_t\,dt + \sigma\,dW_t
  +  \int_{|z| < 1} \gamma(g_{t-}, z) \tilde{N}(dt, dz) + \int_{|z| \ge 1} \gamma(g_{t-}, z) N(dt, dz),
\end{equation*}
\begin{equation*}
  g_0=x.
\end{equation*}

This decomposition lets us treat the continuous and the jump parts separately. In particular, the value function $u(g,t)$ depends only on the current state $g$ and the time $t$.

Fix $t \in [0, T)$. For $s\in[t,T]$, define the discounted value process
\[
Z_s := e^{-r(s-t)}\,u(g_s,\,s), \quad s\in[t,T].
\]
Following the semimartingale calculus of \cite[Chapter 2]{Protter2005},
we apply the It\^o--L\'evy formula to the product $h(s) \cdot F(g_s, s)$ with $h(s)=e^{-r(s-t)}$ and $F(g,s)=u(g,s)$.

For the continuous part of the dynamics, the product rule and the It\^o formula give
\begin{align*}
dZ_s^c &= u(g_s,s)\,d[e^{-r(s-t)}] + e^{-r(s-t)}\,d[u(g_s,s)]^c =\\
&= u(g_s,s)\cdot(-r)\,e^{-r(s-t)}\,ds +\\
& + e^{-r(s-t)}\!\left[ u_t(g_s, s)\,ds + u_g(g_s,s)\,dg_s^c + \frac{1}{2}u_{gg}(g_s,s)\,(dg_s^c)^2 \right],
\end{align*}
where $(dg_s^c)^2 = \sigma^2\,ds$ and $u_t$ denotes the partial derivative with respect to the second (temporal) argument. Here, $g_s^c$ represents the continuous part of the semimartingale $g_s$. Substituting $dg_s^c = -\rho\,g_s\,ds + \sigma\,dW_s$ we get
\begin{align*}
dZ_s^c &= e^{-r(s-t)}\!\left[ u_t - \rho\,g_s\,u_g + \frac{\sigma^2}{2}u_{gg} - ru \right]ds+ \\
& + e^{-r(s-t)}\sigma\,u_g(g_s,s)\,dW_s.
\end{align*}
The $dW_s$ term is a local martingale.

Now we turn to the jump parts. At each jump time $s$, the process $g_s$ jumps by $\Delta g_s = \gamma(g_{s-},z)$, thus
\begin{align*}
\Delta Z_s &= e^{-r(s-t)}\!\bigl[u(g_{s-}+\gamma(g_{s-},z),s) - u(g_{s-},s)\bigr].
\end{align*}
(The discount factor $e^{-r(s-t)}$ is continuous, so it does not jump.) Summing over all jumps up to time $s'$ and introducing the compensated measure $\widetilde{N}(ds,dz) = N(ds,dz) - \nu(dz)ds$ we get
\begin{align*}
\sum_{t<s\leq s'} \Delta Z_s
&= \int_{t}^{s'}\!\int_{\mathbb{R}_0} e^{-r(s-t)}\!\bigl[ u(g_{s-}+\gamma(g_{s-},z),s) - u(g_{s-},s)\bigr] \widetilde{N}(ds,dz) \\
& + \int_{t}^{s'}\!\int_{\mathbb{R}_0} e^{-r(s-t)}\!\bigl[ u(g_{s-}+\gamma(g_{s-},z),s) - u(g_{s-},s)\bigr] \nu(dz)\,ds.
\end{align*}
The $\widetilde{N}$-integral is a local martingale. The $\nu(dz)\,ds$ integral contributes to the predictable drift. To incorporate the truncation function, we add and subtract $\gamma(g_{s-},z)\,u_g(g_{s-},s)\,\mathbf{1}_{|z|\leq 1}$ inside the compensator integral; the subtracted term is absorbed into the drift of the SDE via the $\widetilde{N}$ representation.

Collecting the $ds$-drift from both the continuous and the jump parts, we obtain
\begin{align*}
dZ_s &= e^{-r(s-t)}\!\biggl[
u_t - \rho\,g_s\,u_g + \frac{\sigma^2}{2}u_{gg} - ru \\
& + \int_{\mathbb{R}_0}\!\bigl[u(g_s+\gamma(g_s,z),s)-u(g_s,s)
-\gamma(g_s,z)\,u_g\,\mathbf{1}_{|z|\leq 1}\bigr]\nu(dz)
\biggr]ds \\
& + d\widetilde{M}_s=e^{-r(s-t)}\!\left[ u_t + \mathcal{A} u - ru\right]ds+ d\widetilde{M}_s,
\end{align*}
where $d\widetilde{M}_s$ collects the martingale terms. The spatial terms form the integro-differential generator $\mathcal{A}u$, and we set the forward parabolic operator $\mathcal{L}^\nu u := u_t + \mathcal{A} u - ru$. Thus the dynamics of the discounted process are
\begin{equation}\label{eq:Z-dynamics}
dZ_s = e^{-r(s-t)}\mathcal{L}^\nu u(g_s,s)\,ds + d\widetilde{M}_s.
\end{equation}

In the no-bubble region, $Z_s$ must be a local martingale (the
value of continuing to hold equals the discounted expected future
value).  For this, the $ds$-drift must vanish
\[
  \cL^\nu u(g,t) = u_t + \mathcal{A}u - ru = 0.
\]
This is the PIDE in the continuation (no-bubble) region.

When disagreement $g$ is large enough, the holder of group~$A$'s
asset optimally resells to group~$B$.  Group~$A$ at state $g$
sells to group~$B$ at state $-g$, receiving
\[
  u(g,t) = u(-g,t) + \psi(g,t).
\]
In this region, $Z_s$ is a supermartingale (the holder might have
done better by not exercising yet), so the $ds$-drift is $\leq 0$
which implies 
\[
\cL^\nu u(g,t) =u_t + \mathcal{A} u - ru \le 0,\;\; \text{i.e.}\;\; -u_t - \mathcal{A} u + ru\ge 0.
\]

The two conditions hold on complementary regions, and at any point
at least one holds with equality.  This gives the min-condition
\[
  \min\!\bigl( -u_t - \mathcal{A} u + ru,\; u(g,t)-u(-g,t)-\psi(g,t)\bigr) = 0.
\]
Since $\alpha(T) = 0$, the obstacle satisfies $$\psi(g,T) = g\,\alpha(T) - c = -c<0.$$ As the transaction cost is positive ($c > 0$), the obstacle is negative. The value function of an optimal stopping problem at maturity equals the positive part of the payoff, so $u(g,T) = \max(0, -c) = 0$; that is, the bubble vanishes when the resale opportunity disappears.

We also restrict the viscosity solution to the linear growth class $u(g,t) = \mathcal{O}(|g|)$ as $|g| \to \infty$. This follows from the linear structure of the obstacle $\psi$. By standard optimal stopping theory, the value function inherits the growth class of the reward function, and since the disagreement $g$ enters the obstacle linearly through $g\,\alpha(t)$, the value grows at most linearly in $g$. This $\mathcal{O}(|g|)$ class is used below in the existence proof.
\end{proof}

\section{Viscosity Solutions for the PIDE}\label{sec:viscosity}

Classical solutions require $u\in C^{2,1}$, which need not hold across
the free boundary. We therefore work with viscosity solutions in the sense
of Crandall--Lions, adapted to the non-local obstacle PIDE
\eqref{eq:bubble-pide} following Barles--Imbert~\cite{BI08} and
Cont--Tankov~\cite{ContTankov2004}.

The first argument of the $\min$ in \eqref{eq:bubble-pide} is
$-\mathcal{L}^\nu u = -u_t - \mathcal{A}u + ru$, which
vanishes in the continuation region and is non-negative in the
exercise region.
Since $\mathcal{L}^\nu$ involves $u$ at the distant points
$g+\gamma(g,z)$, the test function $\varphi$ replaces $u$ only in the
local differential terms, while $u$ itself is kept in the
non-local integral and in the obstacle.

\begin{definition}[Viscosity solutions of \eqref{eq:bubble-pide}]
\label{def:visc-pide}
For $(g_0,t_0)\in\mathbb{R}\times(0,T]$ and $\varphi\in C^{2,1}$, define the \emph{test operator}
\begin{multline}\label{eq:testop}
  \mathcal{L}^\nu_\varphi(g_0,t_0;\,u)
  := -\varphi_t(g_0,t_0) + \rho\,g_0\,\varphi_g(g_0,t_0)
  - \tfrac{\sigma^2}{2}\,\varphi_{gg}(g_0,t_0)
  + r\,u(g_0,t_0) \\
  \quad - \int_{\R\setminus\{0\}}\!\Bigl[u(g_0{+}\gamma(g_0,z),t_0)
    - u(g_0,t_0)
    - \gamma(g_0,z)\,\varphi_g(g_0,t_0)\,\mathbf{1}_{|z|\le1}
  \Bigr]\nu(dz).
\end{multline}
Here the local differential terms $(\varphi_t, \varphi_g, \varphi_{gg})$
come from the test function, while the non-local integral and the discount
term $ru$ use the \emph{actual} function $u$.

\noindent\textbf{Viscosity subsolution.} An upper-semicontinuous (USC)
function $u:\mathbb{R}\times[0,T]\to\mathbb{R}$ with $u(\cdot,T)\le 0$ is
a \emph{viscosity subsolution} of \eqref{eq:bubble-pide} if for every
$\varphi\in C^{2,1}$ and every local maximum $(g_0,t_0)$ of $u-\varphi$
with $t_0<T$ the following inequality holds
\begin{equation}\label{eq:visc-sub}
  \min\!\bigl(\mathcal{L}^\nu_\varphi(g_0,t_0;\,u),\;
  u(g_0,t_0)-u(-g_0,t_0)-\psi(g_0,t_0)\bigr)\le 0.
\end{equation}

\noindent\textbf{Viscosity supersolution.} A lower-semicontinuous (LSC)
function $v:\mathbb{R}\times[0,T]\to\mathbb{R}$ with $v(\cdot,T)\ge 0$ is
a \emph{viscosity supersolution} of \eqref{eq:bubble-pide} if for every
$\varphi\in C^{2,1}$ and every local minimum $(g_0,t_0)$ of $v-\varphi$
with $t_0<T$ the following inequality holds
\begin{equation}\label{eq:visc-super}
  \min\!\bigl(\mathcal{L}^\nu_\varphi(g_0,t_0;\,v),\;
  v(g_0,t_0)-v(-g_0,t_0)-\psi(g_0,t_0)\bigr)\ge 0.
\end{equation}
\noindent\textbf{Viscosity solution.}  $u$ is a \emph{viscosity solution}
if $u^*$ (USC regularisation) is a subsolution and $u_*$ (LSC
regularisation) is a supersolution.
\end{definition}

\begin{remark}\label{rem:nonlocal-convention}
In both \eqref{eq:visc-sub} and \eqref{eq:visc-super}, the integral and
the obstacle involve the function $u$ (resp.\ $v$), not
the test function $\varphi$. This is the usual convention for
viscosity solutions of integro-differential equations (see
\cite{BI08,ContTankov2004}): the test function $\varphi$ only
approximates $u$ locally near the contact point $(g_0,t_0)$,
while the non-local terms evaluate $u$ at the distant points
$g_0+\gamma(g_0,z)$ and $-g_0$, where $\varphi$ need not be a good
approximation.
\end{remark}

\begin{remark}\label{rem:testop-pide}
The test operator $\mathcal{L}^\nu_\varphi$ in \eqref{eq:testop}
is the negated forward operator $-\mathcal{L}^\nu$ with the
local terms replaced by those of $\varphi$. For a classical solution
$u\in C^{2,1}$, the choice $\varphi = u$ gives
$\mathcal{L}^\nu_u = -\mathcal{L}^\nu u$, and the PIDE \eqref{eq:bubble-pide} is equivalent to
\eqref{eq:visc-sub}--\eqref{eq:visc-super} with equality throughout.
\end{remark}

\begin{proposition}[Stability]\label{prop:stab-pide}
Let $\{u^\alpha\}_{\alpha\in A}$ be a uniformly locally bounded family of
viscosity subsolutions of \eqref{eq:bubble-pide}. Assume there exists a continuous function $B(g)$ with at most linear growth such that $u^\alpha(g,t) \le B(g)$ globally for all $\alpha$. Then the relaxed upper
semicontinuous envelope
\[
  \bar{u}(g,t) := \limsup_{\substack{(g',t')\to(g,t)\\\alpha\in A}}
  u^\alpha(g',t')
\]
is also a viscosity subsolution of \eqref{eq:bubble-pide}, and
satisfies $\bar{u}(\cdot,T)\le 0$ in the viscosity sense.
\end{proposition}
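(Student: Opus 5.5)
The plan is the standard Barles--Perthame half-relaxed-limit argument, adapted to the two non-local terms: the jump integral and the reflected obstacle $u(-g,t)$.

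\textbf{Step 1 (upper semicontinuity and bounds).} First, $\bar u$ is upper semicontinuous by construction. It is finite because the family is uniformly locally bounded, and it satisfies $\bar u\le B$ because $B$ is continuous. At $t=T$ we only need the inequality in the viscosity sense. This follows from the same limiting argument as in Step 2, applied at boundary points with the relaxed boundary condition $\min(\cdot,\bar u)\le 0$. Alternatively, it follows directly when the $u^\alpha$ are uniformly controlled near $T$.

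\textbf{Step 2 (perturbation and convergence of maxima).} Fix $\varphi\in C^{2,1}$ and a local maximum $(g_0,t_0)$ of $\bar u-\varphi$ with $t_0<T$. We make it strict by replacing $\varphi$ with $\varphi+|g-g_0|^4+|t-t_0|^2$; this does not change the derivatives at $(g_0,t_0)$. By the definition of the relaxed limsup, we choose $\alpha_n$ and $(g_n',t_n')\to(g_0,t_0)$ with $u^{\alpha_n}(g_n',t_n')\to\bar u(g_0,t_0)$. Let $(g_n,t_n)$ be a maximum point of $u^{\alpha_n}-\varphi$ on a small closed ball. The standard argument then gives $(g_n,t_n)\to(g_0,t_0)$ and $u^{\alpha_n}(g_n,t_n)\to\bar u(g_0,t_0)$. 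For large $n$, the point $(g_n,t_n)$ is interior with $t_n<T$, so the subsolution inequality holds there. Hence, for each $n$, either the obstacle term or the operator term is $\le 0$, and one of the two alternatives occurs infinitely often.

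\textbf{Step 3 (obstacle alternative).} Suppose $u^{\alpha_n}(g_n,t_n)-u^{\alpha_n}(-g_n,t_n)-\psi(g_n,t_n)\le0$ infinitely often. Taking limsup, and using that $-g_n\to -g_0$, we get $\limsup u^{\alpha_n}(-g_n,t_n)\le\bar u(-g_0,t_0)$. Continuity of $\psi$ then yields $\bar u(g_0,t_0)-\bar u(-g_0,t_0)-\psi(g_0,t_0)\le 0$. The signs are favorable here, because the term $u(-g)$ enters with a minus sign.

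\textbf{Step 4 (integral alternative; the main obstacle).} Suppose instead $\mathcal L^\nu_\varphi(g_n,t_n;u^{\alpha_n})\le0$ infinitely often. The local terms and $ru$ pass to the limit directly. The difficulty is the integral $-\int[u^{\alpha_n}(g_n+\gamma_n)-u^{\alpha_n}(g_n)-\gamma_n\varphi_g\mathbf 1]\nu(dz)$, which must converge with the correct inequality direction; we need $\liminf$ of this term to be at least its $\bar u$ counterpart. We split the domain into $|z|\le\delta$ and $|z|>\delta$.

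On $|z|>\delta$, where $\nu$ is finite, we use $\limsup_n u^{\alpha_n}(g_n+\gamma(g_n,z),t_n)\le\bar u(g_0+\gamma(g_0,z),t_0)$ pointwise, which holds by continuity of $\gamma$ in $g$. We then apply reverse Fatou with the dominating function $B(g_n+\gamma)\le C(1+|g_n|+|\gamma(g_n,z)|)$. This is integrable on $\{|z|>\delta\}$ because of the $L^2(\nu)$ bound on $\gamma$, which gives $L^1$ on a set of finite measure, uniformly in $n$ by the Lipschitz condition in \eqref{eq:param}. The compensator term $\gamma\varphi_g$ converges by dominated convergence.

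On $|z|\le\delta$, we use $u^{\alpha_n}\le\varphi+(u^{\alpha_n}-\varphi)(g_n,t_n)$ near $(g_n,t_n)$, which holds by maximality. This gives $u^{\alpha_n}(g_n+\gamma)-u^{\alpha_n}(g_n)-\gamma\varphi_g\le \varphi(g_n+\gamma)-\varphi(g_n)-\gamma\varphi_g\le C|\gamma|^2\le CK_1^2(1+|g_n|)^2|z|^2$, so the small-jump contribution is $O(\int_{|z|\le\delta}|z|^2\nu)\to0$ as $\delta\to0$, uniformly in $n$. The same bound holds for $\bar u$. This is the standard Barles--Imbert treatment.

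Letting $n\to\infty$ and then $\delta\to0$ gives $\mathcal L^\nu_\varphi(g_0,t_0;\bar u)\le0$. The delicate point is checking that the sign of each piece aligns with reverse Fatou. This works because $u$ appears with a minus sign in front of the integral of $u(g+\gamma)$, and the term $-u(g_n)$ converges exactly.
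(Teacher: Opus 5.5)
Your proof is correct, and its skeleton matches the paper's. Maximizers of $u^{\alpha_n}-\varphi$ converge to the contact point. The reflected obstacle passes to the limit by upper semicontinuity, because $u(-g,t)$ enters with a minus sign. The large-jump integral is handled by reverse Fatou with the linear majorant $B$.

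The real difference is the small-jump part. The paper switches to the Barles--Imbert formulation and evaluates the interior integral on $\varphi$, through an operator $\mathcal{L}^{\nu,r}_\varphi$ that it describes only in words. It passes to the limit there by dominated convergence and declares the resulting split inequality for $\bar u$ to be the subsolution condition. It therefore tacitly uses the equivalence of that formulation with Definition~\ref{def:visc-pide}, both for the hypothesis on $u^{\alpha_n}$ and for the conclusion on $\bar u$. You instead stay inside Definition~\ref{def:visc-pide}, where the whole integral is evaluated on the function itself. Maximality bounds the small-jump $u^{\alpha_n}$-integrand from above by the $\varphi$-integrand. The third condition in \eqref{eq:param} turns this into an error $O\bigl(\int_{|z|\le\delta}|z|^2\,\nu(dz)\bigr)$ uniform in $n$, which you remove by $\delta\to0$ at the end. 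This costs one extra limit, but it proves exactly the property the paper defines. Splitting in $|z|$ rather than in $|\gamma(g_0,z)|$ also gives a fixed integration domain of finite $\nu$-mass, so no null-boundary choice of radius is needed.

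Two steps deserve a more precise statement.

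First, your majorants $C(1+|g_n|+|\gamma(g_n,z)|)$ depend on $n$. A bound on their integrals that is merely uniform in $n$ does not justify reverse Fatou. What you need (Pratt's lemma) is that the majorants converge pointwise and in $L^1(\nu)$ on $\{|z|>\delta\}$. This does follow from the $L^2(\nu)$-Lipschitz condition on a set of finite $\nu$-mass, together with continuity of $\gamma$ in $g$. The paper's use of $B(\hat g_n+\gamma)$ glosses over the same point.

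Second, in the limit $\delta\to0$, the bound $\bar u(g_0+\gamma,t_0)-\bar u(g_0,t_0)-\gamma\varphi_g\le C|\gamma|^2$ runs in the wrong direction to close the argument on its own. It only shows that the positive part of the $\bar u$-integrand is integrable near $z=0$. Combined with monotone convergence for the negative part, it shows that the truncated expressions converge to $\mathcal{L}^\nu_\varphi(g_0,t_0;\bar u)\in(-\infty,+\infty]$. Your bound $\le C\int_{|z|\le\delta}|z|^2\,\nu(dz)$ on those expressions then gives $\mathcal{L}^\nu_\varphi(g_0,t_0;\bar u)\le0$.

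The terminal-time claim in your Step 1 is only sketched, as it is in the paper.
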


\begin{proof}
We verify the subsolution condition at a point $(g_0,t_0)$
with $t_0<T$. Let $\varphi\in C^{2,1}$ and suppose $(g_0,t_0)$ is a strict local
maximum of $\bar{u}-\varphi$. Without loss of generality, assume $\bar{u}(g_0,t_0)=\varphi(g_0,t_0)$ and that the maximum is strict in a closed ball $\overline{B_\delta(g_0,t_0)}$ for some $\delta>0$.
By the definition of the relaxed upper limit, there exist sequences
$\alpha_n\in A$ and $(g_n,t_n)\to(g_0,t_0)$ such that
$u^{\alpha_n}(g_n,t_n)\to\bar{u}(g_0,t_0)$.
For each $n$, let $(\hat{g}_n,\hat{t}_n)$ be a maximizer of
$u^{\alpha_n}-\varphi$ in the compact ball $\overline{B_\delta(g_0,t_0)}$. We claim that
$(\hat{g}_n,\hat{t}_n)\to(g_0,t_0)$. Indeed, since
\[
  u^{\alpha_n}(\hat{g}_n,\hat{t}_n)-\varphi(\hat{g}_n,\hat{t}_n)
  \ge u^{\alpha_n}(g_n,t_n)-\varphi(g_n,t_n) \to 0,
\]
any accumulation point $(\hat{g},\hat{t})$ satisfies
$\bar{u}(\hat{g},\hat{t})-\varphi(\hat{g},\hat{t})\ge 0$, and by
the strictness of the maximum in the ball, $(\hat{g},\hat{t})=(g_0,t_0)$. By compactness, the
whole sequence converges.

Since $\nu$ is a Radon measure on $\R\setminus\{0\}$, we evaluate the non-local operator by splitting the jump integral at a radius $r > 0$ whose boundary carries zero measure, $\nu(\{z \in \R\setminus\{0\} \mid |\gamma(g_0, z)| = r\}) = 0$. The interior integral ($|\gamma| \le r$) is evaluated against the test function $\varphi$, and the exterior integral ($|\gamma| > r$) against $u^{\alpha_n}$. The subsolution condition for $u^{\alpha_n}$ gives
\begin{equation}\label{eq:sub-n}
  \min\!\bigl(\mathcal{L}^{\nu, r}_\varphi(\hat{g}_n,\hat{t}_n;\,u^{\alpha_n}),\;
  u^{\alpha_n}(\hat{g}_n,\hat{t}_n)
  -u^{\alpha_n}(-\hat{g}_n,\hat{t}_n)
  -\psi(\hat{g}_n,\hat{t}_n)\bigr)\le 0.
\end{equation}
Since $\varphi\in C^{2,1}$ and
$(\hat{g}_n,\hat{t}_n)\to(g_0,t_0)$, the local differential terms $-\varphi_t$,
$\varphi_g$, $\varphi_{gg}$ at $(\hat{g}_n,\hat{t}_n)$
converge to those at $(g_0,t_0)$. Moreover, since the boundary of the integration domain has $\nu$-measure zero, the indicator function converges almost everywhere. Hence, by dominated convergence on the smooth integrand, the interior jump integral converges,
\begin{multline*}
    \lim_{n\to\infty} \int_{|\gamma|\le r} \bigl[ \varphi(\hat{g}_n+\gamma) - \varphi(\hat{g}_n) - \gamma\varphi_g(\hat{g}_n)\mathbf{1}_{\{|z|\le 1\}}\bigr]\nu(dz) =\\= \int_{|\gamma|\le r} \bigl[ \varphi(g_0+\gamma) - \varphi(g_0) - \gamma\varphi_g(g_0)\mathbf{1}_{\{|z|\le 1\}}\bigr]\nu(dz). 
\end{multline*}
For the exterior integral, the growth bound $B(g)$ is a continuous global upper bound. For each $z\in\mathbb{R}_0$, by the definition of $\bar{u}$,
\[
  \limsup_{n\to\infty}
  u^{\alpha_n}(\hat{g}_n+\gamma(\hat{g}_n,z),\hat{t}_n)
  \le \bar{u}(g_0+\gamma(g_0,z),t_0).
\]
Since the exterior measure $\nu_r(dz) := \mathbf{1}_{|\gamma| > r}\nu(dz)$ is finite and the jump size $\gamma$ grows at most linearly, the bound $B(\hat{g}_n+\gamma)$ is integrable over the exterior domain. It serves as a dominating function for the reverse Fatou lemma applied to the non-negative sequence $B - u^{\alpha_n} \ge 0$. Hence,
\begin{align*}
  \limsup_{n\to\infty}
  \int_{|\gamma|>r}\! u^{\alpha_n}(\hat{g}_n{+}\gamma,\hat{t}_n)\nu(dz)
  &\le
  \int_{|\gamma|>r}\! \bar{u}(g_0{+}\gamma(g_0,z),t_0)\nu(dz).
\end{align*}
Since the  operator $\mathcal{L}^{\nu, r}_\varphi$ subtracts the jump integrals, passing to the limit in the continuous components and using the $\limsup$ bound of the exterior integral, we get
\begin{equation}\label{eq:liminf-integral}
  \liminf_{n\to\infty} \bigl(
  \mathcal{L}^{\nu, r}_\varphi(\hat{g}_n,\hat{t}_n;\,u^{\alpha_n}) \bigr)
  \ge \mathcal{L}^{\nu, r}_\varphi(g_0,t_0;\,\bar{u}).
\end{equation}
By construction, $u^{\alpha_n}(\hat{g}_n,\hat{t}_n)\to\bar{u}(g_0,t_0)$, and since $\bar{u}$ is USC, $\limsup_n u^{\alpha_n}(-\hat{g}_n,\hat{t}_n)\le \bar{u}(-g_0,t_0)$. Thus,
\[
  \liminf_n\bigl[u^{\alpha_n}(\hat{g}_n,\hat{t}_n)-u^{\alpha_n}(-\hat{g}_n,\hat{t}_n)\bigr]
  \ge \bar{u}(g_0,t_0)-\bar{u}(-g_0,t_0).
\]
Taking the $\liminf$ of the minimum condition \eqref{eq:sub-n} and using \eqref{eq:liminf-integral}, we obtain
\[
  \min\!\bigl(\mathcal{L}^{\nu, r}_\varphi(g_0,t_0;\,\bar{u}),\;
  \bar{u}(g_0,t_0)-\bar{u}(-g_0,t_0)-\psi(g_0,t_0)\bigr)\le 0.
\]
Since $r>0$ is an admissible splitting radius, this is the subsolution condition for $\bar{u}$. Finally, as each $u^\alpha(g,T)\le 0$, the relaxed upper limit satisfies 
the terminal 
condition in the viscosity sense, i.e. $\min\bigl(\mathcal{L}^\nu \bar{u}(g,T), \; \bar{u}(g,T)\bigr) \le 0$
\end{proof}

\section{Comparison Principle}\label{sec:comparison}
In this section we prove the comparison principle for viscosity solutions of the PIDE. 
for all $x,y \in \R$. We recall the conditions given in \eqref{eq:param} and assume that the discount rate $r$ and the mean-reversion rate $\rho$ are large enough to dominate the growth of the jump variance, measured by $L_\gamma$. Recall also the obstacle bound $\psi(g,t) + \psi(-g,t) \le -2c.$

\begin{theorem}[Comparison Principle]
	\label{thm:comp-pide}
	Assume \eqref{eq:param}.
	Let $u$ be a subsolution and $v$ a supersolution of \eqref{eq:bubble-pide}
	with at most linear growth. Then $u\leq v$ on $\R\times[0,T)$.
\end{theorem}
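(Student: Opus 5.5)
We argue by contradiction with a doubling-of-variables argument adapted to the non-local operator and to the reflected obstacle $u(-g)$. Suppose $M:=\sup_{\R\times[0,T)}(u-v)>0$. First we remove the linear growth and localize. Set
\[
  \Phi_{\varepsilon,\eta,\beta}(x,y,t) := u(x,t)-v(y,t)-\frac{|x-y|^2}{2\varepsilon}
  -\beta\bigl(\langle x\rangle^{2}+\langle y\rangle^{2}\bigr)e^{\lambda (T-t)}-\frac{\eta}{T-t},
\]
with $\langle x\rangle=(1+x^2)^{1/2}$. Because $u,v$ grow at most linearly, $\Phi$ attains its maximum at some $(\bar x,\bar y,\bar t)$ with $\bar t<T$. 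For $\beta,\eta$ small this maximum is at least $M/2$. The terminal conditions $u(\cdot,T)\le0\le v(\cdot,T)$ together with the term $\eta/(T-t)$ rule out $\bar t$ near $T$. Standard estimates then give $|\bar x-\bar y|^2/\varepsilon\to0$ as $\varepsilon\to0$ and $\beta\langle\bar x\rangle^2\to0$. The parameter $\lambda$ will be chosen in terms of $r,\rho,K_1,L_\gamma$ so that the penalty is a strict supersolution of the linear part. This is where we use the standing assumption that $r,\rho$ dominate the jump variance.

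Next we dispose of the obstacle. At $(\bar x,\bar t)$ the subsolution inequality gives either $\mathcal{L}^\nu_\varphi\le0$ or $u(\bar x)-u(-\bar x)-\psi(\bar x)\le0$, while $v$ satisfies both $\ge0$ conditions at $(\bar y,\bar t)$. Suppose the obstacle alternative holds for $u$. Combining it with $v(\bar y)-v(-\bar y)-\psi(\bar y)\ge0$ gives
\[
 u(\bar x)-v(\bar y)\le u(-\bar x)-v(-\bar y)+\psi(\bar x)-\psi(\bar y).
\]
Since $\psi$ is Lipschitz in $g$ uniformly in $t$ and $|\bar x-\bar y|\to0$, the gap at the reflected pair $(-\bar x,-\bar y)$ is at least as large, up to $o(1)$. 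The penalties are even in $(x,y)$, so the reflected pair is also (almost) maximal. We iterate once: at $(-\bar x,-\bar y)$ either the PIDE alternative holds, and we are in the case treated below, or the obstacle holds again. In the latter case, adding the two obstacle inequalities and using $\psi(g)+\psi(-g)=-2c$ gives
\[
0\le -2c+o(1),
\]
which is a contradiction. Hence we may assume $\mathcal{L}^\nu_\varphi(\bar x,\bar t;u)\le0$ at one of the two (equivalent) maximum points.

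Now we apply the nonlocal Jensen--Ishii lemma of Barles--Imbert \cite{BI08}. We split the jump integral at radius $\delta$: for $|z|\le\delta$ it is evaluated on the test functions, and for $|z|>\delta$ on $u$ and $v$. This produces time derivatives $a-b$ equal to the time derivative of the penalty, together with $X\le Y$ up to the penalty Hessian. Subtracting the two viscosity inequalities, we estimate each term.
\begin{itemize}[label={}]
\item \emph{Drift term.} It is $\rho(\bar x-\bar y)^2/\varepsilon$ plus $O(\beta)$ terms, and has a favourable sign.
\item \emph{Small jumps.} These contribute $O(\delta)/\varepsilon$ via the bound on $|\gamma|/|z|$, and vanish as $\delta\to0$ first.
\item \emph{Large jumps.} Maximality of $\Phi$ at shifted points gives
\[
u(\bar x+\gamma(\bar x,z))-v(\bar y+\gamma(\bar y,z))\le u(\bar x)-v(\bar y)+\frac{|\gamma(\bar x,z)-\gamma(\bar y,z)|^2+2(\bar x-\bar y)(\gamma_x-\gamma_y)}{2\varepsilon}+\text{penalty increments}.
\]
The quadratic part is bounded by $L_\gamma^2|\bar x-\bar y|^2/(2\varepsilon)$. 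The linear part cancels against the compensator terms on $|z|\le1$, and on $|z|>1$ it is handled with Cauchy--Schwarz.
\end{itemize}
Collecting everything yields
\[
 r\bigl(u(\bar x)-v(\bar y)\bigr)\le C(L_\gamma^2-\rho)\frac{|\bar x-\bar y|^2}{\varepsilon}+o(1),
\]
and, with $\lambda$ large, a negative contribution from the $\beta$ terms. Since $r(u-v)\ge rM/2>0$, letting $\delta\to0$, then $\varepsilon\to0$, then $\beta,\eta\to0$ gives the contradiction.

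The main obstacle is the large-jump estimate. Controlling the penalty increments $\beta(\langle \bar x+\gamma\rangle^2-\langle\bar x\rangle^2)$ requires $\int|\gamma|^2\,d\nu\lesssim1+|g|^2$, which follows from the first two conditions in \eqref{eq:param}. It also forces the quadratic weight together with the choice of $\lambda$; we would first try $\lambda>2(L_\gamma^2+\sigma^2)+1$ and absorb the remaining terms using the largeness of $r$.
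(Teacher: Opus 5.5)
Your architecture is the paper's: doubling of variables with an even quadratic localization, the Barles--Imbert Jensen--Ishii lemma, maximality of $\Phi$ at the jump-shifted points for the non-local difference, and the reflection trick based on $\psi(g,t)+\psi(-g,t)=-2c$. The step that fails as written is the obstacle step.

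The alternative ``PIDE branch or obstacle branch'' for $u$ is available only at an \emph{exact} maximizer of $\Phi$, because that is where the Jensen--Ishii lemma supplies the jets. Your two obstacle inequalities give only $\Phi(-\bar x,-\bar y,\bar t)\ge\Phi(\bar x,\bar y,\bar t)-\alpha(\bar t)(\bar x-\bar y)$. So the reflected pair is guaranteed to be a maximizer only when $\bar x\le\bar y$, and at an almost-maximizer there is no viscosity inequality to invoke. If you relocalize to an exact maximizer $(x_2,y_2,t_2)$ near $(-\bar x,-\bar y,\bar t)$, its obstacle inequality involves $u(\pm x_2,t_2)$. You cannot identify these with $u(\mp\bar x,\bar t)$, since $u$ is only upper semicontinuous. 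Hence the addition leading to $0\le-2c+o(1)$ is not justified, and the two points are not ``equivalent''.

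The repair is to do the reflection at an exact maximizer of the undoubled functional; the paper does this after sending $\varepsilon\to0$. Let $(x_0,t_0)$ maximize $\Psi(x,t):=u(x,t)-v(x,t)-2\beta\langle x\rangle^2e^{\lambda(T-t)}$. It exists, and $t_0<T$ since $\Psi(\cdot,T)\le0<\sup\Psi$. A lower semicontinuous supersolution satisfies $v(x,t)-v(-x,t)\ge\psi(x,t)$ at \emph{every} point: touch $v$ from below at points $(x_k,t_k)\to(x,t)$ with $v(x_k,t_k)\to v(x,t)$, and use lower semicontinuity at $(-x,t)$. So if the obstacle branch holds for $u$ at $(x_0,t_0)$, evenness of the weight makes $(-x_0,t_0)$ an exact maximizer. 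The obstacle branch cannot hold there too, since adding the two inequalities gives $0\le-2c$. This yields a maximizer $(x_1,t_0)$ with $u(x_1,t_0)-u(-x_1,t_0)-\psi(x_1,t_0)>0$.

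Now double around $(x_1,t_0)$ with an extra localizer $\kappa(|x-x_1|^2+|t-t_0|^2)$; it is quadratic, so its jump contribution is finite and $\mathcal{O}(\kappa)$. The doubled maximizers converge to $(x_1,x_1,t_0)$ with $u(\bar x_\varepsilon,\bar t_\varepsilon)\to u(x_1,t_0)$. Upper semicontinuity of $u$ at $(-x_1,t_0)$ then keeps the obstacle branch strictly positive for small $\varepsilon$. So the PIDE branch holds and your estimates apply; send $\delta\to0$, then $\varepsilon\to0$, then $\kappa\to0$.

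Two smaller points. First, the time derivative of $\eta/(T-t)$ has the wrong sign and leaves $+\eta/(T-\bar t)^2$ on the right of your final inequality. You can simply drop that term, since $\Phi(\cdot,\cdot,T)\le0$ already forces $\bar t<T$. Second, your weight $e^{\lambda(T-t)}$ absorbs all $\beta$-terms once $\lambda$ exceeds a constant depending on $\sigma$, $L_\gamma$, $\int\gamma(0,z)^2\,\nu(dz)$ and $\nu(\{|z|>1\})$. This is cleaner than the paper's $\mathcal{O}(\beta)$ claim, which tacitly needs $\sup(u-v)<\infty$. It also means no largeness of $r$ or $\rho$ is needed, and the theorem assumes none.
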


\begin{proof}
	We use the doubling-of-variables argument with the Jensen--Ishii
	lemma, treating the state-dependent non-local integral term in full.

	Suppose, for contradiction, that $M:=\sup_{\R\times[0,T)}(u-v)>0$.
	Since $u(g,T)\leq 0 \leq v(g,T)$, the supremum is positive and is not attained at the terminal time $t=T$.

	For parameters $\varepsilon,\beta,\eta>0$, introduce the penalization function $\Phi: \R \times \R \times [0, T) \to \R$,
	\[
	\Phi(x,y,t) := u(x,t) - v(y,t) - \phi(x,y,t),
	\]
	where the penalty function $\phi$ is given by
	\[
	\phi(x, y, t) := \frac{|x-y|^2}{2\varepsilon} + \frac{\beta(x^2+y^2)}{2} + \frac{\eta}{T-t}.
	\]
	
	Since $u$ and $v$ have at most linear growth, $\Phi(x,y,t)\to-\infty$ as $|x|+|y|\to\infty$ (because of the $\beta$ terms) and as $t\to T^-$ (because of the $\eta$ term). Hence $\Phi$ attains its global supremum at a finite point $(\tilde{x},\tilde{y},\tilde{t})$ with $\tilde{t}\in[0,T)$.

	Comparing $\Phi(\tilde{x},\tilde{y},\tilde{t}) \geq \Phi(x^*,x^*,t^*)$ at any approximate maximizer $(x^*,t^*)$ of the difference $u-v$, we control the penalty terms. In particular, the difference of the sub- and supersolution at the doubled variables is bounded below by the supremum minus the error terms,
	\begin{equation}\label{eq:max-props}
		u(\tilde{x},\tilde{t}) - v(\tilde{y},\tilde{t}) \geq M - \mathcal{O}(\varepsilon) - \mathcal{O}(\beta) - \mathcal{O}(\eta).
	\end{equation}

	Since $u$ and $v$ are only semicontinuous, they have no classical derivatives at this maximum, and we use second-order parabolic jets. For an upper semicontinuous function $u$, the second-order superjet at $(x,t)$, denoted $J^{2,+}u(x,t)$, is the set of triples $(a, p, X) \in \R \times \R \times \R$ such that
	\begin{multline*}
	J^{2,+}u(x,t) :=
    \left\{ (a, p, X) \;\middle|\;
     \limsup_{(y,s) \to (x,t)} \frac{u(y,s) - u(x,t) - a(s-t) - p(y-x) - \frac{1}{2}X(y-x)^2}{|s-t| + |y-x|^2} \leq 0 \right\}.
	\end{multline*}
	Similarly, for a lower semicontinuous function $v$, the second-order subjet $J^{2,-}v(x,t)$ is defined by the corresponding lower bound,
	\begin{multline*}
    	J^{2,-}v(x,t) := \left\{ (a, p, X) \;\middle|\; \liminf_{(y,s) \to (x,t)} \frac{v(y,s) - v(x,t) - a(s-t) - p(y-x) - \frac{1}{2}X(y-x)^2}{|s-t| + |y-x|^2} \geq 0 \right\}.
	\end{multline*}
	For compactness when passing to the limit in the penalization parameters, we take the closures of these sets, denoted $\overline{J}^{2,+}u$ and $\overline{J}^{2,-}v$.

	By the maximum principle for semicontinuous functions (the Jensen--Ishii lemma \cite[ Lemma 1/Cor. 2]{BI08}), for any $\varepsilon_0 > 0$ there exist $a_1, a_2 \in \R$ and $X, Y \in \R$, elements of these closed jets
	\[
	(a_1,\, p_x,\, X) \in \overline{J}^{2,+}u(\tilde{x},\tilde{t}) \quad \text{and} \quad (a_2,\, p_y,\, Y) \in \overline{J}^{2,-}v(\tilde{y},\tilde{t}),
	\]
	such that the first-order components equal the derivatives of the penalty function,
	\[
	p_x = \phi_x(\tilde{x}, \tilde{y}, \tilde{t}) = \frac{\tilde{x}-\tilde{y}}{\varepsilon} + \beta\tilde{x},\quad
	p_y = -\phi_y(\tilde{x}, \tilde{y}, \tilde{t}) = \frac{\tilde{x}-\tilde{y}}{\varepsilon} - \beta\tilde{y},
	\]
	\[
	a_1 - a_2 = \phi_t(\tilde{x}, \tilde{y}, \tilde{t}) = \frac{\eta}{(T-\tilde{t})^2}.
	\]
	Moreover, the second-order terms satisfy the block-matrix inequality
	\[
	\begin{pmatrix} X & 0 \\ 0 & -Y \end{pmatrix} \leq D^2\phi + \varepsilon_0 (D^2\phi)^2
	\]
	where $D^2\phi$ is the Hessian of $\phi$ in the spatial variables $(x, y)$. Differentiation gives
	\[ 
	D^2\phi = \begin{pmatrix} \frac{1}{\varepsilon}+\beta & -\frac{1}{\varepsilon} \\ -\frac{1}{\varepsilon} & \frac{1}{\varepsilon}+\beta \end{pmatrix} 
	\]
	Taking $\varepsilon_0 = \varepsilon$, the squared Hessian is
	\[
	(D^2\phi)^2 = \begin{pmatrix} \left(\frac{1}{\varepsilon}+\beta\right)^2 + \frac{1}{\varepsilon^2} & -\frac{2}{\varepsilon}\left(\frac{1}{\varepsilon}+\beta\right) \\ -\frac{2}{\varepsilon}\left(\frac{1}{\varepsilon}+\beta\right) & \left(\frac{1}{\varepsilon}+\beta\right)^2 + \frac{1}{\varepsilon^2} \end{pmatrix}.
	\]
	To decouple $X$ and $Y$, we test the matrix inequality against vectors $\xi \in \R^2$. For $\xi = (1, 1)^T$, the terms of order $\mathcal{O}(1/\varepsilon)$ cancel
	\[
	\xi^T \begin{pmatrix} X & 0 \\ 0 & -Y \end{pmatrix} \xi \leq \xi^T \left[ D^2\phi + \varepsilon (D^2\phi)^2 \right] \xi 
	\]
	\[
	X - Y \leq 2\beta + \varepsilon(2\beta^2) = \mathcal{O}(\beta).
	\]
	This bound controls the difference of the second-order terms independently of $\varepsilon$. To bound $X$ and $-Y$ separately, we test the matrix inequality against the basis vectors. For $e_1 = (1, 0)^T$ we get
	\[
	X \leq \left(\frac{1}{\varepsilon} + \beta\right) + \varepsilon \left[ \left(\frac{1}{\varepsilon} + \beta\right)^2 + \frac{1}{\varepsilon^2} \right].
	\]
	We now substitute $(a_1, p_x, X)$ and $(a_2, p_y, Y)$ into the viscosity inequalities for the two equations to reach a contradiction.
	Since $u$ is a subsolution, we evaluate the operator at $(\tilde{x}, \tilde{t})$, putting the superjet $(a_1, p_x, X) \in \overline{J}^{2,+}u(\tilde{x}, \tilde{t})$ in place of $(\partial_t u, \partial_x u, \partial_{xx} u)$; the subsolution condition reads
	\[ 
	\min \Big( F_u,\; u(\tilde{x}, \tilde{t}) - u(-\tilde{x}, \tilde{t}) - \psi(\tilde{x}, \tilde{t}) \Big) \leq 0, 
	\]
	where $F_u$, the continuous integro-differential operator at the test point, is
	\begin{multline*}
		F_u := -a_1 + \rho \tilde{x} p_x - \frac{\sigma^2}{2}X
        + r u(\tilde{x}, \tilde{t}) - \int_{\R\setminus\{0\}} \Big[ u(\tilde{x} + \gamma(\tilde{x}, z), \tilde{t}) - u(\tilde{x}, \tilde{t}) - \gamma(\tilde{x}, z) p_x \mathbf{1}_{\{|z| \leq 1\}} \Big] \nu(dz).
	\end{multline*}
	This expresses the complementarity of the problem: either $u$ touches the obstacle, or the continuous part $F_u$ is non-positive.

	Similarly, $v$ is a supersolution of the same PIDE. Evaluating the operator at $(\tilde{y}, \tilde{t})$ and substituting the subjet $(a_2, p_y, Y) \in \overline{J}^{2,-}v(\tilde{y}, \tilde{t})$ gives the reverse inequality
	\[ 
	\min \Big( F_v,\; v(\tilde{y}, \tilde{t}) - v(-\tilde{y}, \tilde{t}) - \psi(\tilde{y}, \tilde{t}) \Big) \geq 0,
	\]
	where $F_v$ has the analogous form
	\begin{multline*}
		F_v := -a_2 + \rho \tilde{y} p_y - \frac{\sigma^2}{2}Y + 
        r v(\tilde{y}, \tilde{t})- \int_{\R\setminus\{0\}} \Big[ v(\tilde{y} + \gamma(\tilde{y}, z), \tilde{t}) - v(\tilde{y}, \tilde{t}) - \gamma(\tilde{y}, z) p_y \mathbf{1}_{\{|z| \leq 1\}} \Big] \nu(dz).
	\end{multline*}

	Assume first that at the maximum point $(\tilde{x}, \tilde{y}, \tilde{t})$ of $\Phi$ the obstacle condition holds strictly for both functions, that is $u(\tilde{x}, \tilde{t}) > u(-\tilde{x}, \tilde{t}) + \psi(\tilde{x}, \tilde{t})$ and $v(\tilde{y}, \tilde{t}) > v(-\tilde{y}, \tilde{t}) + \psi(\tilde{y}, \tilde{t})$. Then the equations are active, and by the definitions of viscosity sub- and supersolutions, $F_u \leq 0$ and $F_v \geq 0$.

     Subtracting the second inequality from the first gives $F_u - F_v \leq 0$. Expanding the continuous operators, we obtain the inequality
	\begin{align}
	0 \geq F_u - F_v &= -(a_1 - a_2) + \rho(\tilde{x} p_x - \tilde{y} p_y) - \frac{\sigma^2}{2}(X - Y) \notag \\ 
	&\quad + r\bigl(u(\tilde{x}, \tilde{t}) - v(\tilde{y}, \tilde{t})\bigr)  - \mathcal{I}, \label{eq:master_inequality}
	\end{align}
	where $\mathcal{I}$ is the difference of the non-local integral terms. Write $\gamma_x := \gamma(\tilde{x}, z)$ and $\gamma_y := \gamma(\tilde{y}, z)$. Then
	\begin{equation} \label{eq:integral_def}
	\mathcal{I} := \int_{\R\setminus\{0\}} \biggl[ \Delta u(z) - \Delta v(z) - \bigl(\gamma_x p_x - \gamma_y p_y\bigr)\mathbf{1}_{\{|z| \leq 1\}} \biggr] \nu(dz),
	\end{equation}
	where $\Delta u(z) := u(\tilde{x}+\gamma_x, \tilde{t}) - u(\tilde{x}, \tilde{t})$ and $\Delta v(z) := v(\tilde{y}+\gamma_y, \tilde{t}) - v(\tilde{y}, \tilde{t})$.
	By the form of the penalty, the time-derivative term is $-(a_1 - a_2) = -\frac{\eta}{(T-\tilde{t})^2} < 0.$

	Substituting $p_x = \frac{\tilde{x}-\tilde{y}}{\varepsilon} + \beta\tilde{x}$ and $p_y = \frac{\tilde{x}-\tilde{y}}{\varepsilon} - \beta\tilde{y}$, we get
	\begin{equation*}
	\rho(\tilde{x} p_x - \tilde{y} p_y) = \rho \left[ \tilde{x}\left(\frac{\tilde{x}-\tilde{y}}{\varepsilon} + \beta\tilde{x}\right) - \tilde{y}\left(\frac{\tilde{x}-\tilde{y}}{\varepsilon} - \beta\tilde{y}\right) \right] = \rho \frac{(\tilde{x}-\tilde{y})^2}{\varepsilon} + \rho\beta(\tilde{x}^2 + \tilde{y}^2).
	\end{equation*}
	Since $\rho \geq 0$, this term is non-negative.

	From the block-matrix inequality tested against $(1, 1)^T$, we have $X - Y \leq 2\beta + 2\varepsilon\beta^2$. Hence
	\begin{equation*}
	-\frac{\sigma^2}{2}(X - Y) \geq -\frac{\sigma^2}{2} \bigl( 2\beta + 2\varepsilon\beta^2 \bigr).
	\end{equation*}
	
	By the penalization property \eqref{eq:max-props} and $r > 0$, the discount term is bounded below by
	\[
	r\bigl(u(\tilde{x}, \tilde{t}) - v(\tilde{y}, \tilde{t})\bigr) \geq rM - r\mathcal{O}(\varepsilon) - r\mathcal{O}(\beta) - r\mathcal{O}(\eta).
	\]

	The non-local integral $\mathcal{I}$ requires more care, using the Lipschitz condition \eqref{eq:param}. We use that $\Phi$ is maximal at $(\tilde{x}, \tilde{y}, \tilde{t})$: evaluating $\Phi$ at the jump-shifted points gives $\Phi(\tilde{x}+\gamma_x, \tilde{y}+\gamma_y, \tilde{t}) \leq \Phi(\tilde{x}, \tilde{y}, \tilde{t})$, which yields an upper bound for the uncompensated jump differences,
	\begin{equation*}
	\Delta u(z) - \Delta v(z) \leq \phi(\tilde{x}+\gamma_x, \tilde{y}+\gamma_y, \tilde{t}) - \phi(\tilde{x}, \tilde{y}, \tilde{t}).
	\end{equation*}
	Expanding the explicit form of $\phi$ directly yields
	\begin{multline}\label{eq:jump_bound}
	\Delta u(z) - \Delta v(z)\le \\ \le \frac{1}{2\varepsilon}\bigl[ (\tilde{x}-\tilde{y} + \gamma_x - \gamma_y)^2 - (\tilde{x}-\tilde{y})^2 \bigr] + \frac{\beta}{2}\bigl[ (\tilde{x}+\gamma_x)^2 - \tilde{x}^2 + (\tilde{y}+\gamma_y)^2 - \tilde{y}^2 \bigr] =\\
	= \frac{\tilde{x}-\tilde{y}}{\varepsilon}(\gamma_x - \gamma_y) + \frac{(\gamma_x-\gamma_y)^2}{2\varepsilon} + \beta\tilde{x}\gamma_x + \frac{\beta}{2}\gamma_x^2 + \beta\tilde{y}\gamma_y + \frac{\beta}{2}\gamma_y^2.
	\end{multline}
	Substituting $p_x$ and $p_y$, the first-order compensator term in the L\'evy integral \eqref{eq:integral_def} is
	\[
	\gamma_x p_x - \gamma_y p_y = \frac{\tilde{x}-\tilde{y}}{\varepsilon}(\gamma_x - \gamma_y) + \beta\tilde{x}\gamma_x + \beta\tilde{y}\gamma_y.
	\]
	We insert \eqref{eq:jump_bound} into the integral and split it into small and large jumps.

	\textbf{Small jumps ($|z| \leq 1$).}
	Subtracting the compensator cancels the first-order cross-terms in $\gamma$, leaving only quadratic terms,
	\[
	\mathcal{I}_{small} \leq \int_{|z| \leq 1} \biggl[ \frac{(\gamma_x-\gamma_y)^2}{2\varepsilon} + \frac{\beta}{2}(\gamma_x^2 + \gamma_y^2) \biggr] \nu(dz).
	\]
	By the Lipschitz condition \eqref{eq:param}, this is bounded by
	\[
	\mathcal{I}_{small} \le \frac{L_\gamma^2}{2} \frac{(\tilde{x}-\tilde{y})^2}{\varepsilon} + \frac{\beta}{2} \int_{|z| \leq 1} (\gamma_x^2 + \gamma_y^2) \nu(dz).
	\]

    \textbf{Large jumps ($|z| > 1$).}
	Here there is no compensator, so the first-order cross-terms from the $\phi$ expansion remain. We split the integral into a part scaled by $1/\varepsilon$ and a part scaled by $\beta.$ We get
	\begin{multline*}
	\mathcal{I}_{large} \le \int_{|z|>1} \biggl[ \frac{\tilde{x}-\tilde{y}}{\varepsilon}(\gamma_x-\gamma_y) + \frac{(\gamma_x-\gamma_y)^2}{2\varepsilon} \biggr] \nu(dz) + \int_{|z|>1} \biggl[ \frac{\beta}{2}(\gamma_x^2 + \gamma_y^2) + \beta(\tilde{x}\gamma_x + \tilde{y}\gamma_y) \biggr] \nu(dz).
	\end{multline*}
	
	For the $1/\varepsilon$ part, since the total mass of large jumps $\lambda := \int_{|z|>1} 1 \, \nu(dz)$ is finite, we bound the linear term by the Cauchy--Schwarz inequality with respect to $\nu$ as follows
	\[
	\frac{|\tilde{x}-\tilde{y}|}{\varepsilon} \int_{|z|>1} |\gamma_x-\gamma_y| \nu(dz) \le \frac{|\tilde{x}-\tilde{y}|}{\varepsilon} \sqrt{\int_{\R\setminus\{0\}}|\gamma_x-\gamma_y|^2 \nu(dz)} \sqrt{\lambda} \le L_\gamma\sqrt{\lambda}\frac{(\tilde{x}-\tilde{y})^2}{\varepsilon}.
	\]
	The quadratic term in the $1/\varepsilon$ part is bounded by integrating the Lipschitz assumption \eqref{eq:param},
	\[
	\int_{|z|>1} \frac{(\gamma_x-\gamma_y)^2}{2\varepsilon} \nu(dz) \le \frac{L_\gamma^2}{2} \frac{(\tilde{x}-\tilde{y})^2}{\varepsilon}.
	\]

	For the $\beta$ part, the jump amplitude grows at most linearly, so $\int_{|z|>1} \gamma_x^2 \nu(dz) \le C(1 + \tilde{x}^2)$ for some constant $C$. By Cauchy--Schwarz, the cross-term $\beta \tilde{x} \gamma_x$ is bounded by a term proportional to $\beta \tilde{x}^2$. In the doubling-of-variables method, the penalty terms $\beta \tilde{x}^2$ and $\beta \tilde{y}^2$ are uniformly bounded and tend to zero as $\beta \to 0$, so the $\beta$ integral is absorbed into an error term $\mathcal{O}(\beta)$.

	Adding the bounds for $\mathcal{I}_{small}$ and $\mathcal{I}_{large}$ we obtain
	\[
	\mathcal{I} = \mathcal{I}_{small} + \mathcal{I}_{large} \le \biggl(L_\gamma^2 + L_\gamma\sqrt{\lambda}\biggr) \frac{(\tilde{x}-\tilde{y})^2}{\varepsilon} + \mathcal{O}(\beta).
	\]
	Hence $-\mathcal{I} \ge -C_1 \frac{(\tilde{x}-\tilde{y})^2}{\varepsilon} - \mathcal{O}(\beta)$, where $C_1 := L_\gamma^2 + L_\gamma\sqrt{\lambda}$ depends only on the jump parameters.

	We now pass to the limit in \eqref{eq:master_inequality}, in the order $\varepsilon \to 0$, then $\beta \to 0$, then $\eta \to 0$.
	By the standard properties of the penalization, the points $(\tilde{x}_\varepsilon, \tilde{y}_\varepsilon)$ converge to some $(x_\beta, x_\beta)$ as $\varepsilon \to 0$, and the penalty $\frac{|\tilde{x}_\varepsilon - \tilde{y}_\varepsilon|^2}{\varepsilon} \to 0$. In particular, the error term $C_1 \frac{(\tilde{x}-\tilde{y})^2}{\varepsilon}$ vanishes. The difference $u(\tilde{x}_\varepsilon, \tilde{t}_\varepsilon) - v(\tilde{y}_\varepsilon, \tilde{t}_\varepsilon)$ converges to a value $M_{\beta, \eta}$, and $2\varepsilon\beta^2 \to 0$.

	Next, as $\beta \to 0$, the maximum point converges to $(x_\eta, t_\eta)$, the $\beta$-terms (the diffusion bound $2\beta$ and the integral bound $\mathcal{O}(\beta)$) vanish, the value becomes $M_\eta$, and the inequality reduces to
	\[
	0 \geq -\frac{\eta}{(T-t_\eta)^2} + r M_\eta.
	\]
Rearranging this inequality yields $r M_\eta \le \frac{\eta}{(T-t_\eta)^2}$. 
To evaluate the limit as $\eta \to 0$, we rely on the terminal conditions. Because the barriers guarantee $u(g,T) = v(g,T) = 0$, and we assumed the global supremum $M := \sup_{\R \times [0,T)} (u-v)$ is strictly positive ($M > 0$), the true unpenalized supremum must be attained at some strictly interior time $t^* < T$. 

As $\eta \to 0$, the penalized maximum $(x_\eta, t_\eta)$ converges to this true maximum, meaning $t_\eta \to t^*$. Consequently, the denominator $(T-t_\eta)^2$ is strictly bounded away from zero (converging to $(T-t^*)^2 > 0$). As the temporal parameter is removed, the supremum $M_\eta \to M$, and the bounding penalty term $\frac{\eta}{(T-t_\eta)^2} \to 0$. This  implies $r M \le 0.$
However, by assumption $M > 0$ and $r > 0$, we have $r M > 0$, which strictly contradicts $0 \geq r M$. Therefore, we must have $M \le 0$, which proves $u \le v$ globally.

	We now treat the remaining case. Suppose that along a subsequence (still indexed by $\varepsilon$) the equation is not active for $u$ at the maximum point, but the obstacle is, that is
	\begin{equation}\label{eq:obstacle_u}
	u(\tilde{x}, \tilde{t}) - u(-\tilde{x}, \tilde{t}) \leq \psi(\tilde{x}, \tilde{t}).
	\end{equation}
	By the definition of a supersolution, $v$ lies above the obstacle everywhere, so at $\tilde{y}$,
	\begin{equation}\label{eq:obstacle_v}
	v(\tilde{y}, \tilde{t}) - v(-\tilde{y}, \tilde{t}) \geq \psi(\tilde{y}, \tilde{t}).
	\end{equation}
	Subtracting \eqref{eq:obstacle_v} from \eqref{eq:obstacle_u} gives an upper bound for the difference of the functions at the reflected points,
	\begin{equation}\label{eq:obstacle_diff_upper}
	\bigl(u(\tilde{x}, \tilde{t}) - v(\tilde{y}, \tilde{t})\bigr) - \bigl(u(-\tilde{x}, \tilde{t}) - v(-\tilde{y}, \tilde{t})\bigr) \leq \psi(\tilde{x}, \tilde{t}) - \psi(\tilde{y}, \tilde{t}).
	\end{equation}
	We obtain a lower bound for this difference from the symmetry of $\Phi$. Recall that
	\[
	\phi(x, y, t) = \frac{(x-y)^2}{2\varepsilon} + \frac{\beta(x^2 + y^2)}{2} + \frac{\eta}{T-t},
	\]
	so $\phi$ is invariant under reflection, $\phi(x, y, t) = \phi(-x, -y, t)$. Since $(\tilde{x}, \tilde{y}, \tilde{t})$ is the maximum of $\Phi$, we have $\Phi(\tilde{x}, \tilde{y}, \tilde{t}) \geq \Phi(-\tilde{x}, -\tilde{y}, \tilde{t})$; expanding and using the symmetry of $\phi$, the penalty terms cancel and
	\begin{equation}\label{eq:obstacle_diff_lower}
	u(\tilde{x}, \tilde{t}) - v(\tilde{y}, \tilde{t}) \geq u(-\tilde{x}, \tilde{t}) - v(-\tilde{y}, \tilde{t}).
	\end{equation}
	Combining \eqref{eq:obstacle_diff_lower} and \eqref{eq:obstacle_diff_upper}, the difference of the functions is trapped between zero and the variation of the payoff,
	\begin{equation}\label{eq:squeeze}
	0 \leq \bigl(u(\tilde{x}, \tilde{t}) - v(\tilde{y}, \tilde{t})\bigr) - \bigl(u(-\tilde{x}, \tilde{t}) - v(-\tilde{y}, \tilde{t})\bigr) \leq \psi(\tilde{x}, \tilde{t}) - \psi(\tilde{y}, \tilde{t}).
	\end{equation}
	Let $\varepsilon \to 0$. Up to a subsequence, the maximizers converge, $(\tilde{x}, \tilde{y}, \tilde{t}) \to (\hat{x}, \hat{x}, \hat{t})$. Since $|\tilde{x} - \tilde{y}| \to 0$ and $\psi$ is uniformly continuous, the right-hand side of \eqref{eq:squeeze} tends to zero, and the squeeze gives
	\begin{equation}\label{eq:reflected_max}
	u(\hat{x}, \hat{t}) - v(\hat{x}, \hat{t}) = u(-\hat{x}, \hat{t}) - v(-\hat{x}, \hat{t}).
	\end{equation}
	This equality shows that the reflected point $(-\hat{x}, \hat{t})$ also attains the global supremum of the limiting difference $u(x,t) - v(x,t) - \beta x^2 - \frac{\eta}{T-t}$.

	Moreover, by the upper semicontinuity of $u$ and the continuity of $\psi$, the obstacle condition \eqref{eq:obstacle_u} passes to the limit,
	\begin{equation}\label{eq:obstacle_limit_1}
	u(\hat{x}, \hat{t}) - u(-\hat{x}, \hat{t}) \leq \psi(\hat{x}, \hat{t}).
	\end{equation}
	We now derive a contradiction. Since $(-\hat{x}, \hat{t})$ is also a global maximum, we may repeat the doubling argument centered at this reflected point. Suppose the obstacle condition for $u$ is also active there, that is
	\begin{equation}\label{eq:obstacle_limit_2}
	u(-\hat{x}, \hat{t}) - u(\hat{x}, \hat{t}) \leq \psi(-\hat{x}, \hat{t}).
	\end{equation}
	Adding \eqref{eq:obstacle_limit_1} and \eqref{eq:obstacle_limit_2} eliminates $u$ and gives
	\[
	0 \leq \psi(\hat{x}, \hat{t}) + \psi(-\hat{x}, \hat{t}).
	\]
	But by the positivity of the transaction cost in \eqref{eq:param}, the sum of the obstacles satisfies $\psi(x,t) + \psi(-x,t) \le -2c < 0$, so $0 \leq -2c < 0$, a contradiction.

	Hence the obstacle condition cannot be active for $u$ at $(-\hat{x}, \hat{t})$, so the equation must be active there. Centering the argument at $(-\hat{x}, \hat{t})$, we are back in the previous case, which gives the contradiction $rM \leq 0$.

	This covers all cases and completes the proof.
	
\end{proof}

\begin{corollary}[Uniqueness]\label{cor:unique}
	The viscosity solution of \eqref{eq:bubble-pide} is unique among
	functions with at most linear growth.
\end{corollary}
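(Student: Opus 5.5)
The plan is to deduce uniqueness directly from the Comparison Principle, Theorem~\ref{thm:comp-pide}, applied twice. Let $u_1$ and $u_2$ be two viscosity solutions of \eqref{eq:bubble-pide} with at most linear growth. By Definition~\ref{def:visc-pide}, the regularisations $(u_i)^*$ are USC subsolutions and $(u_i)_*$ are LSC supersolutions. Each satisfies the appropriate terminal inequality: $(u_i)^*(\cdot,T)\le 0\le (u_i)_*(\cdot,T)$.

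First I check that these envelopes stay in the linear growth class. If $|u_i(g,t)|\le C(1+|g|)$, then taking $\limsup$ and $\liminf$ over nearby points gives $|(u_i)^*|,|(u_i)_*|\le C(1+|g|+\delta)$ for any $\delta>0$, so the same bound holds in the limit. Hence Theorem~\ref{thm:comp-pide} applies to the pair $((u_1)^*,(u_2)_*)$ and gives $(u_1)^*\le (u_2)_*$ on $\R\times[0,T)$. Exchanging the roles gives $(u_2)^*\le (u_1)_*$.

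I then chain these with the trivial inequalities $(u_i)_*\le u_i\le (u_i)^*$:
\[
(u_1)^*\le (u_2)_*\le u_2\le (u_2)^*\le (u_1)_*\le u_1\le (u_1)^*.
\]
All terms are therefore equal on $\R\times[0,T)$. In particular $u_1=u_2$ there, and each $u_i$ is continuous, since its USC and LSC envelopes coincide. At $t=T$, both the terminal condition $u(\cdot,T)=0$ and the envelope inequalities force the value $0$, so $u_1=u_2$ on all of $\R\times[0,T]$.

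The only step needing care is confirming that the hypotheses of Theorem~\ref{thm:comp-pide} hold for the semicontinuous envelopes and not just for $u_i$ themselves: the growth bound (handled above) and the terminal inequalities. I expect no real obstacle beyond this, since all the analytic work sits in the comparison principle. I would also note in passing that the same chain shows every viscosity solution in the linear growth class is automatically continuous.
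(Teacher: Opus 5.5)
Your proposal is correct and follows the same route as the paper: Theorem~\ref{thm:comp-pide} applied twice with the roles of the two solutions exchanged. Applying it to the envelopes $(u_1)^*,(u_2)_*$ and closing the chain with $(u_i)_*\le u_i\le (u_i)^*$, after checking their linear growth and terminal values, is a more careful version of the paper's one-line argument, which applies comparison to $u$ and $v$ directly; it also yields continuity of the solution.
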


\begin{proof}
	If $u$ and $v$ are both viscosity solutions, then $u$ is a subsolution
	and $v$ is a supersolution, so $u\leq v$ by the comparison principle.
	Reversing the roles ($v$ is a subsolution, $u$ is a supersolution)
	gives $v\leq u$. Hence $u=v$.
\end{proof}


\section{Existence of Viscosity Solutions}\label{sec:existence}

To prove existence of a unique viscosity solution, we use Perron's method. For this we construct sub- and supersolutions that serve as lower and upper barriers for the bubble price.

\subsection{The Subsolution Barrier}

\begin{lemma}\label{lem:sub-pide}
The intrinsic payoff function, defined as $\underline{u}(g,t) := \max(0, \psi(g,t))$, is a viscosity subsolution of \eqref{eq:bubble-pide}.
\end{lemma}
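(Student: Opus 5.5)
The plan is to check the viscosity subsolution inequality \eqref{eq:visc-sub} directly at every contact point, splitting on the sign of $\psi(g_0,t_0)$. In one case the obstacle term of the $\min$ is non-positive. In the other, $\underline{u}$ vanishes near $(g_0,t_0)$, which forces the test operator to be non-positive.

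\emph{Preliminaries.} Since $\psi$ is continuous, $\underline{u}=\max(0,\psi)$ is continuous, hence USC. It is non-negative and has at most linear growth, because $0\le \underline{u}(g,t)\le |g|\,\|\alpha\|_\infty$. At maturity $\alpha(T)=0$, so $\underline{u}(g,T)=\max(0,-c)=0\le 0$. Now fix $\varphi\in C^{2,1}$ and a local maximum $(g_0,t_0)$ of $\underline{u}-\varphi$ with $t_0\in(0,T)$.

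\emph{Case 1: $\psi(g_0,t_0)\ge 0$.} Then $g_0\alpha(t_0)\ge c>0$. By \eqref{eq:obstacle-sum}, $\psi(-g_0,t_0)=-2c-\psi(g_0,t_0)<0$, so $\underline{u}(-g_0,t_0)=0$. The obstacle term therefore equals
\[
\psi(g_0,t_0)-0-\psi(g_0,t_0)=0,
\]
so the $\min$ in \eqref{eq:visc-sub} is $\le 0$. Nothing needs to be said about the differential part.

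\emph{Case 2: $\psi(g_0,t_0)<0$.} Here the obstacle term equals $-\underline{u}(-g_0,t_0)-\psi(g_0,t_0)$. This is strictly positive: it is $-\psi(g_0,t_0)>0$ if $\psi(-g_0,t_0)<0$, and $2c>0$ otherwise. So we must show $\mathcal{L}^\nu_\varphi(g_0,t_0;\underline{u})\le 0$.

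By continuity, $\psi<0$ on a neighbourhood $U$ of $(g_0,t_0)$, so $\underline{u}\equiv 0$ on $U$. Hence $-\varphi$ has a local maximum at the interior point $(g_0,t_0)$, that is, $\varphi$ has a local minimum there. This gives
\[
\varphi_t(g_0,t_0)=0,\qquad \varphi_g(g_0,t_0)=0,\qquad \varphi_{gg}(g_0,t_0)\ge 0.
\]
Substituting into \eqref{eq:testop} and using $\underline{u}(g_0,t_0)=0$, the local terms contribute $-\tfrac{\sigma^2}{2}\varphi_{gg}\le 0$, the discount term vanishes, and the compensator term vanishes. We are left with
\[
\mathcal{L}^\nu_\varphi(g_0,t_0;\underline{u})\le -\int_{\R\setminus\{0\}}\underline{u}\bigl(g_0+\gamma(g_0,z),t_0\bigr)\,\nu(dz)\le 0,
\]
since $\underline{u}\ge 0$.

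The only delicate point is that this integral is well defined and finite, so that the inequality is meaningful. The integrand vanishes wherever $|\gamma(g_0,z)|$ is smaller than the radius $\delta$ of $U$ in the $g$-direction. By Chebyshev's inequality together with $\int|\gamma(g_0,z)|^2\nu(dz)<\infty$ from \eqref{eq:param}, the set $\{|\gamma(g_0,z)|\ge\delta\}$ has finite $\nu$-measure. On that set the integrand is bounded by $\|\alpha\|_\infty(|g_0|+|\gamma|)$, which is $\nu$-integrable by the Cauchy--Schwarz inequality. With the integral finite, the $\min$ in \eqref{eq:visc-sub} is again $\le 0$, which completes both cases and proves the lemma.
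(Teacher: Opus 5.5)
Your proof is correct and follows the paper's argument. The paper splits into $\psi<0$, $\psi>0$ and $\psi=0$; in the last two it uses the obstacle branch, which vanishes because $\psi(-g_0,t_0)\le -2c<0$ forces $\underline{u}(-g_0,t_0)=0$, and in the first it uses the local-minimum property of $\varphi$ together with $\underline{u}\ge 0$ — exactly your argument once your two non-negative cases are merged. Your extra checks, that $\underline{u}$ is upper semicontinuous and that the jump integral in Case~2 is finite (via Chebyshev and Cauchy--Schwarz), are not in the paper and make the argument slightly more complete.
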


\begin{proof}
We must check that for every point $(g_0,t_0)$ with $t_0 < T$ and every test function $\varphi \in C^{2,1}$ such that $\underline{u}-\varphi$ has a local maximum at $(g_0,t_0)$,
\[
  \min\!\bigl(\cL^\nu_\varphi(g_0,t_0; \underline{u}),\; \underline{u}(g_0,t_0) - \underline{u}(-g_0,t_0) - \psi(g_0,t_0)\bigr) \leq 0.
\]
We consider three regions, according to the sign of $\psi(g_0,t_0)$.

\textbf{Region 1: $\psi(g_0,t_0) < 0$.} \\
Here $\underline{u}(g,t) = 0$ in a neighborhood of $(g_0,t_0)$. Since $\underline{u}-\varphi$ has a local maximum at $(g_0,t_0)$ and $\underline{u}$ is locally constant, $\varphi$ has a local minimum at $(g_0,t_0)$, so $\varphi_t = 0$, $\varphi_g = 0$, and $\varphi_{gg} \geq 0$.

The local part of the test operator is then
\[
  -\varphi_t + \rho g_0\,\varphi_g - \tfrac{\sigma^2}{2}\varphi_{gg} + r\,\underline{u}(g_0,t_0)
  \;=\; 0 + 0 - \tfrac{\sigma^2}{2}\varphi_{gg} + 0 \;\leq\; 0.
\]
For the non-local term, $\underline{u}(g_0+\gamma(g_0,z),t_0) - \underline{u}(g_0,t_0) = \max(0, \psi(g_0+\gamma(g_0,z),t_0)) \geq 0$, and since $\varphi_g = 0$ the truncation term vanishes, so the integral is
\[
  -\int_{\R\setminus\{0\}}\!\bigl[\underline{u}(g_0+\gamma,t_0) - \underline{u}(g_0,t_0)\bigr]\nu(dz) 
  \;=\; -\int_{\R\setminus\{0\}}\max(0,\psi(g_0+\gamma,t_0))\,\nu(dz) \;\leq\; 0.
\]
Adding these, $\cL^\nu_\varphi(g_0,t_0) \leq 0$, so the first argument of the minimum is non-positive and the subsolution condition holds.

\textbf{Region 2: $\psi(g_0,t_0) > 0$.} \\
Here the barrier equals the payoff, $\underline{u}(g_0,t_0) = \psi(g_0,t_0) > 0$. By \eqref{eq:param}, $\psi(g_0,t_0) + \psi(-g_0,t_0) \leq -2c$, and since $\psi(g_0,t_0) > 0$ and $c > 0$,
\[
  \psi(-g_0,t_0) \leq -2c - \psi(g_0,t_0) < -2c < 0.
\]
Thus at the reflected point the payoff is negative, so
\[
  \underline{u}(-g_0,t_0) = \max(0, \psi(-g_0,t_0)) = 0,
\]
and the second argument of the minimum is
\begin{align*}
  \underline{u}(g_0,t_0) - \underline{u}(-g_0,t_0) - \psi(g_0,t_0)
  &= \psi(g_0,t_0) - 0 - \psi(g_0,t_0) = 0 \;\leq\; 0.
\end{align*}
which is zero, so the subsolution condition holds through the obstacle.

\textbf{Region 3: $\psi(g_0,t_0) = 0$.} \\
Here $\underline{u}(g_0,t_0) = 0$. As above, $\psi(-g_0,t_0) \leq -2c < 0$, so $\underline{u}(-g_0,t_0) = 0$, and $\underline{u}(g_0,t_0) - \underline{u}(-g_0,t_0) = 0 \leq \psi(g_0,t_0) = 0$.

\textbf{Terminal condition.}\\
By the terminal condition of Theorem \ref{thm:FK}, the resale option is worthless at maturity, $\psi(g,T) \leq 0$, so
\[
  \underline{u}(g,T) = \max(0, \psi(g,T)) = 0,
\]
which is the required terminal condition for a subsolution.
\end{proof}

\subsection{The Supersolution Barrier}

To stay in the linear growth uniqueness class of Corollary \ref{cor:unique}, the supersolution must grow at most linearly as $|g| \to \infty$. Let $\nu(dz)$ be the L\'evy measure, and let $\lambda := \nu(\{z \in \R\setminus\{0\} \mid |z| > 1\}) < \infty$ be the intensity of large jumps.

\begin{lemma}\label{lem:super-pide}
Define the smooth function $f(g) := k\sqrt{g^2+M} + \frac{g}{2} + K$ for a  parameter $k > 1/2$. Assume the variance bound in \eqref{eq:param} holds, the time profile is non-increasing ($\alpha'(t) \le 0$), and the continuous drift dominates the jump growth, that is
 \begin{equation}\label{eq:growth_cond}
r + \rho > \sqrt{\lambda} L_\gamma + \frac{1}{2} L_\gamma^2.
\end{equation}
Then for any smoothing constant $M > 0$ there exist a constant $k=k^*>1/2$ and a large enough constant $K > 0$ such that $\tilde{u}(g,t) := \alpha(t)f(g)$ is a global supersolution of the PIDE \eqref{eq:bubble-pide}.
\end{lemma}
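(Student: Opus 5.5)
The plan is to verify the two arguments of the minimum in \eqref{eq:visc-super} separately. Since $\tilde u=\alpha(t)f(g)$ is smooth, by Remark~\ref{rem:testop-pide} it is enough to check the classical inequalities $-\mathcal{L}^\nu\tilde u\ge 0$ and $\tilde u(g,t)-\tilde u(-g,t)-\psi(g,t)\ge0$ pointwise. The terminal condition is immediate, since $\alpha(T)=0$ gives $\tilde u(\cdot,T)=0$.

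\textbf{Obstacle term.} The even part $k\sqrt{g^2+M}+K$ of $f$ cancels in $f(g)-f(-g)$, leaving $f(g)-f(-g)=g$. Hence
\[
\tilde u(g,t)-\tilde u(-g,t)-\psi(g,t)=\alpha(t)g-\alpha(t)g+c=c>0,
\]
for every choice of $k,K,M$.

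\textbf{Sign of $f$ and the time derivative.} Since $k>1/2$ and $\sqrt{g^2+M}\ge|g|$, we have $f(g)\ge (k-\tfrac12)|g|+K>0$. Combined with $\alpha'\le0$, this gives $-\tilde u_t=-\alpha'(t)f(g)\ge0$, so this term can be dropped. Because $\alpha\ge0$, it then remains to show, uniformly in $g$,
\[
\mathcal{G}f(g):=\rho g f'(g)-\tfrac{\sigma^2}{2}f''(g)+rf(g)-\int_{\R\setminus\{0\}}\bigl[f(g+\gamma)-f(g)-\gamma f'(g)\mathbf{1}_{\{|z|\le1\}}\bigr]\nu(dz)\ge0 .
\]

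\textbf{Local terms.} With $h(g):=\sqrt{g^2+M}$ we have $f'=k g/h+\tfrac12$ and $0\le f''=kM/h^3\le k/\sqrt M$. This gives
\[
\rho g f'=\rho k\,\frac{g^2}{h}+\frac{\rho g}{2},\qquad rf=rkh+\frac{rg}{2}+rK,
\]
and $g^2/h\ge h-\sqrt M$. The local part is therefore at least
\[
(r+\rho)k\,h-\frac{r+\rho}{2}|g|-\rho k\sqrt M-\frac{\sigma^2k}{2\sqrt M}+rK .
\]

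\textbf{Jump integral (main obstacle).} A naive bound through $\sup f''$ gives terms quadratic in $g$, which would destroy linear growth. Two sharper estimates avoid this.

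For small jumps I will use the elementary convexity estimate
\[
h(g+\gamma)-h(g)-\gamma h'(g)\le \frac{\gamma^2}{2h(g)}.
\]
It follows by squaring: $\bigl(h+\gamma g/h+\gamma^2/(2h)\bigr)^2\ge h^2+2g\gamma+\gamma^2=h(g+\gamma)^2$. The linear part $g/2$ of $f$ cancels exactly against its compensator. So the small-jump integral is at most
\[
\frac{k}{2h}\int_{|z|\le1}\gamma(g,z)^2\,\nu(dz).
\]
From the Lipschitz condition in \eqref{eq:param} with $y=0$, together with $(a+b)^2\le(1+\delta)a^2+C_\delta b^2$, we get $\int\gamma(g,z)^2\nu(dz)\le(1+\delta)L_\gamma^2g^2+C_\delta$. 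Hence the small-jump part is at most $\tfrac{(1+\delta)}{2}kL_\gamma^2h+C_\delta k/\sqrt M$.

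For large jumps, $f$ is Lipschitz with constant $k+\tfrac12$. Cauchy--Schwarz against the finite measure $\mathbf{1}_{|z|>1}\nu$ then gives
\[
\int_{|z|>1}\bigl|f(g+\gamma)-f(g)\bigr|\,\nu(dz)\le (k+\tfrac12)\sqrt\lambda\,\bigl(L_\gamma|g|+C\bigr).
\]

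\textbf{Choice of constants.} Collecting the terms, the coefficient of $h\ (\ge|g|)$ is
\[
k\Bigl[r+\rho-\sqrt\lambda L_\gamma-\tfrac{1+\delta}{2}L_\gamma^2\Bigr]-\tfrac12\bigl(r+\rho+\sqrt\lambda L_\gamma\bigr).
\]
By the strict inequality \eqref{eq:growth_cond}, I first fix $\delta>0$ so that the bracket is positive. I then choose $k=k^*>1/2$ large enough that the whole coefficient is nonnegative. All remaining terms are constants depending on $k^*,M,\sigma,\lambda,C,C_\delta$, and these are absorbed by taking $K$ large in $rK$. This yields $\mathcal{G}f\ge0$, hence $-\mathcal{L}^\nu\tilde u\ge0$, and so $\tilde u$ is a supersolution.

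The delicate step is the jump estimate, specifically getting a bound linear in $|g|$ with the sharp constants $\tfrac12L_\gamma^2$ and $\sqrt\lambda L_\gamma$. This is why the convexity inequality for $h$, rather than $\sup f''$, is essential.
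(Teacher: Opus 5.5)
Your proposal is correct and follows the paper's proof essentially step for step: the obstacle identity $\tilde u(g,t)-\tilde u(-g,t)=\alpha(t)g$, dropping $-\alpha' f\ge 0$, the small-jump bound $k\gamma^2/(2\sqrt{g^2+M})$, the Lipschitz/Cauchy--Schwarz/Minkowski large-jump bound, and then choosing $k^*$ large followed by $K$ large. You obtain the small-jump bound by squaring rather than the paper's conjugate identity; that step also needs the base to be positive, which holds since, with your $h=\sqrt{g^2+M}$, $h+\gamma g/h+\gamma^2/(2h)=\bigl((g+\gamma/2)^2+\gamma^2/4+M\bigr)/h>0$. Your $(1+\delta)$ splitting together with $|g|\le\sqrt{g^2+M}$ gives a bound uniform in $g$, which replaces the paper's separate asymptotic analysis as $g\to\pm\infty$ and its extreme-value argument for the deficit.
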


\begin{proof}
Let $h(g) := k\sqrt{g^2+M} + \frac{g}{2}$, so that $f(g) = h(g) + K$. We check the obstacle and the differential conditions separately. For $k > 1/2$, $h(g)$ is positive on $\R$, so $f(g) \ge K > 0$, and $f'(g) = h'(g) = k\frac{g}{\sqrt{g^2+M}} + \frac{1}{2}$.

We require $\tilde{u}(g,t) - \tilde{u}(-g,t) \geq \psi(g,t) = g\alpha(t) - c$. Since $K$ and $k\sqrt{g^2+M}$ cancel in the difference,
$$
  \tilde{u}(g,t) - \tilde{u}(-g,t) = \alpha(t)\biggl[ \Bigl(h(g) + K\Bigr) - \Bigl(h(-g) + K\Bigr) \biggr] = \alpha(t)g.
$$
The condition becomes $\alpha(t)g \ge \alpha(t)g - c$, that is $0 \ge -c$. As $c > 0$, the obstacle condition holds for every $k$.

Let $\mathcal{P} u := u_t - \rho g u_g + \frac{1}{2}\sigma^2 u_{gg} - r u$ be the continuous parabolic operator, and write the negated PIDE as $-\mathcal{L}^\nu \tilde{u} = -\mathcal{P} \tilde{u} - J$.
Using $\tilde{u}_t = \alpha'(t)f(g)$, $\tilde{u}_g = \alpha(t)h'(g)$, and $\tilde{u}_{gg} = \alpha(t)h''(g)$, the negated parabolic operator is
$$
  -\mathcal{P}\tilde u = \alpha(t) \biggl[ \Bigl(r - \frac{\alpha'(t)}{\alpha(t)}\Bigr)f(g) + \rho g h'(g) - \frac{\sigma^2 k M}{2(g^2+M)^{3/2}} \biggr].
$$
For the term $\rho g h'(g)$ we have
\begin{multline*}
  \rho g h'(g) = \rho\biggl( \frac{k g^2}{\sqrt{g^2+M}} + \frac{g}{2} \biggr) = \rho\biggl( \frac{k(g^2+M)}{\sqrt{g^2+M}} - \frac{k M}{\sqrt{g^2+M}} + \frac{g}{2} \biggr) = \rho h(g) - \frac{\rho k M}{\sqrt{g^2+M}}.
\end{multline*}
Substituting this and separating the growth part $h(g)$ from the constant $K$, we get
$$
  \Bigl(r - \frac{\alpha'(t)}{\alpha(t)}\Bigr)\bigl(h(g) + K\bigr) + \rho h(g) = \Bigl(r + \rho - \frac{\alpha'(t)}{\alpha(t)}\Bigr)h(g) + \Bigl(r - \frac{\alpha'(t)}{\alpha(t)}\Bigr)K.
$$
The continuous operator then becomes
$$
  -\mathcal{P}\tilde u = \alpha(t) \biggl[ \Bigl(r + \rho - \frac{\alpha'(t)}{\alpha(t)}\Bigr)h(g) + \Bigl(r - \frac{\alpha'(t)}{\alpha(t)}\Bigr)K - \frac{\rho k M}{\sqrt{g^2+M}} - \frac{\sigma^2 k M}{2(g^2+M)^{3/2}} \biggr].
$$

The L\'evy integral is $J = \alpha(t) \int_{\R\setminus\{0\}} [ f(g+\gamma) - f(g) - \gamma f'(g) \mathbf{1}_{\{|z|\le 1\}} ] \nu(dz)$. The constant $K$ cancels in the jump difference $f(g+\gamma)-f(g) = h(g+\gamma)-h(g)$. We split the integral into large and small jumps.

For large jumps ($|z| > 1$), the uncompensated difference is bounded by $\|h'\|_\infty |\gamma(g,z)| = (k + 1/2)|\gamma(g,z)|$. By Cauchy--Schwarz with the finite exterior measure $\lambda$,
\begin{multline*}
J_{large} \le \alpha(t)\left(k + \frac{1}{2}\right) \int_{|z|>1} |\gamma(g, z)| \nu(dz) \le  \alpha(t) \left(k + \frac{1}{2}\right)\sqrt{\lambda} \sqrt{\int_{|z|>1} |\gamma(g, z)|^2 \nu(dz)}.
\end{multline*}
By \eqref{eq:param}, $\int_{\R\setminus\{0\}} |\gamma(x, z) - \gamma(y, z)|^2 \nu(dz) \le L_\gamma^2 |x-y|^2$. To separate the state-dependent growth from the baseline jump variance, we use Minkowski's inequality in $L^2(\nu)$. With $C_{large} := \int_{|z|>1} \gamma(0,z)^2 \nu(dz) < \infty$,
\begin{multline*}
\sqrt{\int_{|z|>1} |\gamma(g, z)|^2 \nu(dz)} \le \sqrt{\int_{|z|>1} |\gamma(g, z) - \gamma(0, z)|^2 \nu(dz)} + \sqrt{C_{large}} \le L_\gamma |g| + \sqrt{C_{large}}.
\end{multline*}
Substituting this bound gives the large-jump estimate
$$
J_{large}\le \alpha(t) \left(k + \frac{1}{2}\right) \sqrt{\lambda} \Bigl( L_\gamma |g| + \sqrt{C_{large}} \Bigr).
$$
For the small-jump integral ($|z| \le 1$), we compute the uncompensated second difference of the test function. The linear term $g/2$ cancels, and the remainder $R$ is
\begin{multline*}
    R := h(g+\gamma)-h(g)-\gamma h'(g) =\\= \left( k\sqrt{(g+\gamma)^2+M} + \frac{g}{2} + \frac{\gamma}{2}  \right) - \left( k\sqrt{g^2+M} + \frac{g}{2}\right) - \left( k\gamma \frac{g}{\sqrt{g^2+M}} + \frac{\gamma}{2} \right)=\\= k\biggl( \sqrt{(g+\gamma)^2+M} - \sqrt{g^2+M} - \gamma\frac{g}{\sqrt{g^2+M}} \biggr).
\end{multline*}
  
Set $A := \sqrt{g^2+M}$ and $B := \sqrt{(g+\gamma)^2+M}$, both positive. Then
$$
  R =k\left( B - A - \frac{\gamma g}{A}\right).
$$
Over the common denominator $A$,
$$
  R = k\cdot\frac{AB - A^2 - \gamma g}{A}.
$$
Using $A^2 = g^2 + M$ in the numerator,
\begin{align*}
  R &= k\cdot\frac{AB - (g^2 + M) - \gamma g}{A} \\
    &= k\cdot\frac{AB - M - g(g + \gamma)}{A}.
\end{align*}
To remove the mixed radical $AB$, multiply numerator and denominator by the positive conjugate $AB + M + g(g+\gamma)$. By the difference of squares,
\begin{equation}\label{eq:R_conjugate}
  R = k\cdot\frac{(AB)^2 - \bigl(M + g(g+\gamma)\bigr)^2}{A\bigl(AB + M + g(g+\gamma)\bigr)}.
\end{equation}
We expand the numerator. For the first term,
\begin{align*}
  (AB)^2 &= g^2(g+\gamma)^2 + M(g+\gamma)^2 + M g^2 + M^2,
\end{align*}
and for the second,
$$
  \bigl(M + g(g+\gamma)\bigr)^2 = M^2 + 2Mg(g+\gamma) + g^2(g+\gamma)^2.
$$
Subtracting, the terms $g^2(g+\gamma)^2$ and $M^2$ cancel,
\begin{multline*}
  \Bigl[ g^2(g+\gamma)^2 + M(g+\gamma)^2 + M g^2 + M^2 \Bigr] - \Bigl[ M^2 + 2Mg(g+\gamma) + g^2(g+\gamma)^2 \Bigr] =\\
  = M(g+\gamma)^2 + Mg^2 - 2Mg(g+\gamma) = M\gamma^2.
\end{multline*}
so \eqref{eq:R_conjugate} becomes the identity
\begin{equation*}
  R = k\cdot\frac{M\gamma^2}{A\bigl(AB + M + g(g+\gamma)\bigr)}.
\end{equation*}
By $g^2+(g+\gamma)^2 \ge 2|g(g+\gamma)|$, we have $A^2 B^2 = g^2(g+\gamma)^2 + M(g^2+(g+\gamma)^2) + M^2 \ge (|g(g+\gamma)| + M)^2$, hence $AB \ge |g(g+\gamma)| + M$. The term in the denominator is then bounded below by $|g(g+\gamma)| + g(g+\gamma) + 2M \ge 2M$, which gives $R \le \frac{k\gamma^2}{2\sqrt{g^2+M}}$.

Integrating this bound against the L\'evy measure, the small-jump integral satisfies
\begin{equation}\label{eq:Jsmall_integral}
  J_{small} \le \frac{k\alpha(t)}{2\sqrt{g^2+M}} \int_{|z|\le 1} \gamma(g,z)^2 \nu(dz).
\end{equation}

For the integral of the squared jump amplitude we again use Minkowski's inequality in $L^2$ over $\{|z| \le 1\}$. With $C_{small} := \int_{|z|\le 1} \gamma(0,z)^2 \nu(dz)$,
\begin{multline*}
  \sqrt{\int_{|z|\le 1} \gamma(g,z)^2 \nu(dz)} \le  \sqrt{\int_{|z|\le 1} |\gamma(g,z) - \gamma(0,z)|^2 \nu(dz)} + \sqrt{C_{small}} \le L_\gamma |g| + \sqrt{C_{small}}.
\end{multline*}
Squaring gives
$$
  \int_{|z|\le 1} \gamma(g,z)^2 \nu(dz) \le L_\gamma^2 g^2 + 2L_\gamma \sqrt{C_{small}} |g| + C_{small},
$$
and substituting into \eqref{eq:Jsmall_integral},
$$
  J_{small} \le \alpha(t) \frac{k\bigl(L_\gamma^2 g^2 + 2L_\gamma \sqrt{C_{small}} |g| + C_{small}\bigr)}{2\sqrt{g^2+M}}.
$$
As $|g| \to \infty$, this behaves like $\frac{k}{2} L_\gamma^2 |g|$.

Combining the bounds, the requirement $-\mathcal{L}^\nu \tilde{u} = -\mathcal{P} \tilde{u} - J_{small}-J_{large}\ge 0$ becomes
\begin{multline*}
  \Bigl(r + \rho - \frac{\alpha'(t)}{\alpha(t)}\Bigr)h(g) + \Bigl(r - \frac{\alpha'(t)}{\alpha(t)}\Bigr)K \ge  \left(k + \frac{1}{2}\right)\sqrt{\lambda} \Bigl( L_\gamma |g| + \sqrt{C_{large}} \Bigr) +\\+ \frac{k\bigl(L_\gamma^2 g^2 + 2L_\gamma \sqrt{C_{small}} |g|\bigr)}{2\sqrt{g^2+M}} 
  + \frac{\rho k M}{\sqrt{g^2+M}} + \frac{\sigma^2 k M}{2(g^2+M)^{3/2}} + \frac{k C_{small}}{2\sqrt{g^2+M}}.
\end{multline*}
By assumption $\alpha$ is non-increasing ($\alpha'(t) \le 0$), so $-\alpha'(t)/\alpha(t) \ge 0$. Since $h(g)$ and $K$ are positive, we bound the left-hand side below by dropping the non-negative time-derivative terms,
$$
\Bigl(r + \rho - \frac{\alpha'(t)}{\alpha(t)}\Bigr)h(g) + \Bigl(r - \frac{\alpha'(t)}{\alpha(t)}\Bigr)K \ge (r + \rho)h(g) + rK.
$$
The right-hand side is now independent of time.
Let
$$
D(g; M, k) := \text{RHS}(g) - (r+\rho)h(g)
$$
be the spatial deficit. The supersolution condition $-\mathcal{L}^\nu \tilde{u} \ge 0$ holds provided $rK \ge D(g; M, k)$ for all $g \in \R$.
We evaluate the deficit as $g \to \pm\infty$.

The fraction coming from the small jumps grows linearly,
\[
  \lim_{|g| \to \infty} \frac{k\bigl(L_\gamma^2 g^2 + 2L_\gamma \sqrt{C_{small}} |g|\bigr)}{2|g|\sqrt{g^2+M}} = \lim_{|g| \to \infty} \frac{kL_\gamma^2|g| +2kL_\gamma \sqrt{C_{small}}  }{2|g|\sqrt{1+M/g^2}} =k\cdot\frac{1}{2} L_\gamma^2.
\]
Dividing $D(g; M, k)$ by $|g|$ and letting $|g| \to \infty$, the fractional penalty terms (mean-reversion, diffusion, and the small-jump origin variance) decay like $\mathcal{O}(|g|^{-2})$ or $\mathcal{O}(|g|^{-4})$ and vanish. The remaining terms — the large-jump compensator penalty, the small-jump variance, and the drift $(r+\rho)h(g)$ — grow linearly, with a leading coefficient depending on $k > 1/2$. Since $h(g) = k\sqrt{g^2+M} + \frac{g}{2}$ grows as $(k+1/2)g$ for $g>0$ and as $(k-1/2)|g|$ for $g<0$, this coefficient differs in the two directions, and we treat them separately.

\textbf{Case 1: $g \to +\infty$.}
Here $h(g) \sim (k + 1/2)g$. Dividing $D(g; M, k)$ by $g$ and taking the limit, the fractional terms vanish and the linear terms dominate,
$$
  \limsup_{g \to +\infty} \frac{D(g; M, k)}{|g|} \le \biggl(k + \frac{1}{2}\biggr)\sqrt{\lambda} L_\gamma + \frac{k}{2} L_\gamma^2 - (r+\rho)\biggl(k + \frac{1}{2}\biggr).
$$
For $D(g; M, k) \to -\infty$, this coefficient must be negative, which gives
$$
  r + \rho > \sqrt{\lambda} L_\gamma + \frac{k}{k + 1/2}\biggl(\frac{1}{2} L_\gamma^2\biggr).
$$

\textbf{Case 2: $g \to -\infty$.}
Here $h(g) \sim (k - 1/2)|g|$, so the drift penalizes the system in proportion to this slope. Dividing by $|g|$ and taking the limit,
$$
  \limsup_{g \to -\infty} \frac{D(g; M, k)}{|g|} \le \biggl(k + \frac{1}{2}\biggr)\sqrt{\lambda} L_\gamma + \frac{k}{2} L_\gamma^2 - (r+\rho)\biggl(k - \frac{1}{2}\biggr).
$$
Requiring this coefficient negative gives the condition for the left tail,
$$
  r + \rho > \frac{k + 1/2}{k - 1/2}\sqrt{\lambda} L_\gamma + \frac{k}{k - 1/2}\biggl(\frac{1}{2} L_\gamma^2\biggr).
$$
We must satisfy both conditions. Since $k > 1/2$, we have $k - 1/2 < k + 1/2$, hence
$$
\frac{k}{k - 1/2} > \frac{k}{k + 1/2}\quad \text{and}\quad\frac{k + 1/2}{k - 1/2}> 1,
$$
so Case~2 implies Case~1. By assumption \eqref{eq:growth_cond}, $r + \rho > \sqrt{\lambda} L_\gamma + \frac{1}{2} L_\gamma^2$, so there is a gap $\epsilon > 0$. Since $\lim_{k \to \infty} \frac{k}{k - 1/2} =\lim_{k \to \infty} \frac{k+1/2}{k - 1/2}= 1$, there is a finite $k^* > 1/2$ large enough that
$$
  r + \rho > \frac{k^* + 1/2}{k^* - 1/2}\sqrt{\lambda} L_\gamma + \frac{k^*}{k^* - 1/2}\biggl(\frac{1}{2} L_\gamma^2\biggr).
$$
With this $k^*$ the deficit diverges,
$$
\lim_{|g| \to \infty} D(g; M, k^*) = -\infty.
$$
Since $D(g; M, k^*)$ is continuous on $\R$ and tends to $-\infty$ at both ends, it is non-positive outside some compact interval, and by the extreme value theorem it attains a finite global maximum $D_{max}(M, k^*) < \infty$.

Fix any $M > 0$. Since $r > 0$, choose $K$ with
$$
  K \ge \max\left(0, \frac{D_{max}(M, k^*)}{r}\right).
$$
Then $rK \ge D(g; M, k^*)$ for all $g$, which absorbs the deficit. Hence $\tilde{u}(g,t) := \alpha(t)\bigl(k^*\sqrt{g^2+M} + \frac{g}{2} + K\bigr)$ is a global supersolution in the $\mathcal{O}(|g|)$ uniqueness class.
\end{proof}
\begin{remark}
In the usual setting (e.g.\ BMS~\cite{BMS14}) supersolutions are built by gluing a quadratic function near the origin to a linear function at infinity. For state-dependent L\'evy jumps this is not convenient, since the non-local operator crosses the gluing boundary, and the infinite-activity singularity at $z=0$ requires global $C^2$ smoothness. A global quadratic barrier, on the other hand, would leave the $\mathcal{O}(|g|)$ uniqueness class and produce an $\mathcal{O}(|g|^4)$ jump variance. The function $f(g) = k\sqrt{g^2+M} + \frac{g}{2} + K$ avoids both problems: it is a single $C^2$ expression of order $\mathcal{O}(|g|)$, in the uniqueness class, whose second derivative decays as $\mathcal{O}(|g|^{-3})$ and thus controls the small-jump variance for large states.
\end{remark}

\subsection{Existence via Perron's Method}

\begin{theorem}\label{thm:exist-pide}
Under assumptions \eqref{eq:param} and condition \eqref{eq:growth_cond}, there exists a unique viscosity solution $u$ of \eqref{eq:bubble-pide} satisfying the barrier bounds $\underline{u} \leq u \leq \tilde u$.
\end{theorem}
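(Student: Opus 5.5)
We apply Perron's method in the Ishii form adapted to non-local operators, following Barles--Imbert \cite{BI08}. The candidate solution is
\[
  u(g,t) := \sup\bigl\{ w(g,t) \;:\; w \text{ is a viscosity subsolution of \eqref{eq:bubble-pide}},\ \underline{u}\le w\le \tilde u \bigr\}.
\]
Lemma~\ref{lem:sub-pide} shows that $\underline{u}$ belongs to this class, so it is non-empty. Lemma~\ref{lem:super-pide} supplies the upper barrier $\tilde u$, which has linear growth. We first need $\underline{u}\le\tilde u$. Because $f\ge K>0$ and $h(g)\ge \frac{|g|}{2}\cdot$(a positive factor), enlarging $K$ if necessary gives $\alpha(t)f(g)\ge g\alpha(t)-c$ and $\alpha f\ge 0$. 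Every element of the class is then squeezed between two functions of linear growth, uniformly. We also note that $\tilde u(\cdot,T)=0$ because $\alpha(T)=0$, and that $\underline u(\cdot,T)=0$, so the terminal data are pinned.

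\textbf{Step 1 (subsolution property of $u^*$).} Proposition~\ref{prop:stab-pide} applies with $B:=\tilde u$, which is continuous with linear growth. It shows that $u^*$, the relaxed upper envelope of the family, is a subsolution with $u^*(\cdot,T)\le 0$. Since $\tilde u$ is continuous, we also have $u^*\le\tilde u$, and hence $u^*$ itself lies in the class. Therefore $u^*=u$, so $u$ is upper semicontinuous.

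\textbf{Step 2 (supersolution property of $u_*$).} This is the bump construction. Suppose $u_*$ fails the supersolution inequality at some $(g_0,t_0)$ with $t_0<T$ for a test function $\varphi$ touching from below. Two alternatives must be handled.

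If the obstacle part fails, i.e.\ $u_*(g_0,t_0)-u_*(-g_0,t_0)<\psi(g_0,t_0)$, we raise $u$ near $(g_0,t_0)$. The reflected value $u(-g_0,t_0)$ is unchanged because $-g_0\neq g_0$ unless $g_0=0$, and at $g_0=0$ the strict condition reduces to $0<-c$, which is impossible. The inequality $\psi(g)+\psi(-g)\le -2c$ should guarantee that the raised function still satisfies the subsolution inequality near $(-g_0,t_0)$.

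If the differential part fails, i.e.\ $\mathcal{L}^\nu_\varphi(g_0,t_0;u_*)<0$, we set
\[
  w:=\max\bigl(u,\ \varphi+\delta-\kappa(|g-g_0|^4+|t-t_0|^2)\bigr)
\]
in a small ball. Continuity of the local terms handles the local part. For the non-local part, $w\ge u$ everywhere, and the integral enters the test operator with a minus sign, so replacing $u$ by $w$ only decreases $\mathcal{L}^\nu_\varphi$. The subsolution inequality is therefore preserved both inside and outside the ball. We must also check that $w\le\tilde u$. This requires that $u_*(g_0,t_0)<\tilde u(g_0,t_0)$. If instead equality holds, then $\varphi$ touches $\tilde u$ from below, and the supersolution property of $\tilde u$ (Lemma~\ref{lem:super-pide}) gives a contradiction via the monotonicity of the non-local term, since $u_*\le\tilde u$. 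In either alternative we obtain a subsolution exceeding $u$ at some point, which contradicts maximality. The terminal condition $u_*(\cdot,T)\ge0$ follows from $u\ge\underline u\ge0$.

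\textbf{Step 3 (continuity and uniqueness).} Theorem~\ref{thm:comp-pide} applies to $u^*$ and $u_*$, both of linear growth. It gives $u^*\le u_*$, so $u$ is continuous and is a viscosity solution. Uniqueness follows from Corollary~\ref{cor:unique}.

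The main obstacle is Step~2. The coupling between the obstacle and the reflection $-g$ means that raising $u$ near $g_0$ also changes the obstacle seen from $-g_0$. In addition, the non-local bump must be handled at distant points. We would first use the disjointness of the exercise regions, which follows from $c>0$, to localize the bump away from its reflection. We would then use the sign monotonicity of the integral to avoid any quantitative estimate on the bump's jump contribution.
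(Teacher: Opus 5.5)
Your architecture is the paper's. It uses Perron's method over subsolutions between $\underline u$ and $\tilde u$, and Proposition~\ref{prop:stab-pide} for $u^*$; take $B(g):=\alpha(0)f(g)$ there, since $B$ may not depend on $t$. It then uses a local bump for $u_*$ split by which branch of the minimum fails, the gap $u_*(g_0,t_0)<\tilde u(g_0,t_0)$ from the supersolution property of $\tilde u$, and comparison for continuity and uniqueness.

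The genuine gap is the non-local part of the differential alternative in Step~2. Your claim that replacing $u$ by $w$ only decreases $\mathcal{L}^\nu_\varphi$ is true at base points where $w=u$, but false where the bump is active. There the base value $w(g,t)=\hat\varphi(g,t)+\mu$ is raised; here $\hat\varphi=\varphi-\kappa b$, and $\mu$ is your raise $\delta$ renamed. This raised value enters the test operator with a plus sign, both through $r\,w(g,t)$ and through the $-w(g,t)$ inside $-\int[w(g+\gamma)-w(g)-\dots]\,\nu(dz)$ for jumps that leave the raised region. The paper controls this with the Barles--Imbert splitting. Near jumps are evaluated on the bump, where the shift cancels, and far jumps on $w\ge u_*$ over a set of finite mass $\nu_\delta$. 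The raise then costs exactly $\mu(r+\nu_\delta)$, which is absorbed by taking $\mu<\eta/(r+\nu_\delta)$, with $\eta$ the margin of the failing inequality. Moreover, that inequality is known only at $(g_0,t_0)$, while the subsolution test must hold at every contact point in the ball. Continuity of the local terms does not give this. You need upper semicontinuity in the base point of $-\int_{\mathrm{far}}u_*(g+\gamma(g,z),t)\,\nu(dz)$, which the paper obtains from Fatou's lemma using $u_*\ge0$, lower semicontinuity of $u_*$, and a splitting radius with $\nu$-null boundary.

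Conversely, the reflection coupling you single out as the main obstacle is not one, and $\psi(g)+\psi(-g)\le-2c$ is not what handles it. Since $w\ge u$, at any $g'$ where $w=u$ you have $w(g')-w(-g')-\psi(g')\le u(g')-u(-g')-\psi(g')$. The obstacle branch is non-increasing in the reflected value, just as the integral is non-increasing in the distant values. So the subsolution inequality of $u$ near $(-g_0,t_0)$ passes to $w$ by the same monotonicity you already use for the integral.

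What must actually be checked in the obstacle alternative is the raised branch near $g_0$, i.e.\ $\hat\varphi(g,t)+\mu-u_*(-g,t)-\psi(g,t)<0$. This follows from $w(-g,\cdot)\ge u_*(-g,\cdot)$ and upper semicontinuity of $-u_*$ once $\mu<\eta$. It holds whether or not $-g$ lies in the ball, so the case $g_0=0$ needs no separate treatment. You also need the gap against $\tilde u$ in this alternative, which comes from $\tilde u(g,t)-\tilde u(-g,t)=\psi(g,t)+c$.
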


\begin{proof}
We first check that the barriers are ordered, $\underline{u} \leq \tilde u$. Where $\psi(g,t) \leq 0$, $\underline{u}(g,t) = 0 \leq \tilde u(g,t)$. Where $\psi(g,t) > 0$, $\underline{u}(g,t) = \psi(g,t)$, and the obstacle condition for the supersolution gives $\tilde u(g,t) \geq \tilde u(-g,t) + \psi(g,t) \geq \psi(g,t) = \underline{u}(g,t)$. Hence the barriers are ordered.

Let $\mathcal{S}$ be the set of viscosity subsolutions between the barriers,
\[
  \mathcal{S} := \{w \mid w \text{ is a subsolution of \eqref{eq:bubble-pide}}, \text{ and } \underline{u} \leq w \leq \tilde u\}.
\]
This set is non-empty, since $\underline{u} \in \mathcal{S}$ by Lemma~\ref{lem:sub-pide}. Define the pointwise supremum
\[
  u(g,t) := \sup\{w(g,t) \mid w \in \mathcal{S}\}.
\]
By the stability of viscosity solutions under suprema (Proposition~\ref{prop:stab-pide}), the upper semicontinuous envelope $u^*$ is a subsolution. Since $\tilde u$ is continuous and bounds $\mathcal{S}$, we have $u^* \leq \tilde u$, so $u^* \in \mathcal{S}$ and $u^* = u$; thus $u$ is upper semicontinuous.

It remains to show that the lower semicontinuous envelope $u_*$ is a supersolution. Suppose, for contradiction, that $u_*$ fails the supersolution condition at an interior point $(g_0,t_0) \in \R \times [0,T)$.

In the non-local viscosity formulation of \cite[Definition $4$]{BI08}, the integro-differential operator is evaluated by splitting the measure at a radius $\delta > 0$. The interior integral ($|\gamma| < \delta$) is evaluated against a smooth test function, and the exterior integral ($|\gamma| \ge \delta$) against the envelope $u_*$.

Let $\varphi$ be a test function such that $u_* - \varphi$ has a local minimum of zero at $(g_0,t_0)$ in $B_\delta(g_0,t_0)$. To make the minimum strict, set $\hat{\varphi} := \varphi - \epsilon b$, where $b(g,t) \ge 0$ is a smooth quartic bump (for instance $|g-g_0|^4 + |t-t_0|^2$) that vanishes at the center and is positive elsewhere in $\bar{B}_\delta$; then $u_* - \hat{\varphi}$ has a strict local minimum at $(g_0,t_0)$. Since the first and second derivatives of $b$ vanish at $(g_0,t_0)$, the local operator $-\mathcal{P}\hat{\varphi}$ equals $-\mathcal{P}\varphi$. Subtracting $\epsilon b$ adds a small positive penalty to the negated interior integral, which is absorbed by taking $\epsilon > 0$ small. The failure of the supersolution property at $(g_0,t_0)$ means that the minimum is negative,
\[
  \min\bigl(-\cL^\nu_\delta(g_0,t_0; \hat{\varphi}, u_*), \;\; u_*(g_0,t_0) - u_*(-g_0,t_0) - \psi(g_0,t_0)\bigr) < 0.
\]
Hence at least one of the following holds for some $\eta > 0$,
\begin{align}
  -\cL^\nu_\delta(g_0,t_0; \hat{\varphi}, u_*) &\le -\eta < 0, \label{eq:fail1}\\
  u_*(g_0,t_0) - u_*(-g_0,t_0) - \psi(g_0,t_0) &\le -\eta < 0. \label{eq:fail2}
\end{align}

We first show that there is a gap $u_*(g_0,t_0) < \tilde{u}(g_0,t_0)$. If equality held, $\hat{\varphi}$ would touch the smooth upper barrier $\tilde{u}$ from below at $(g_0, t_0)$, so that $-\mathcal{P}\hat{\varphi} \ge -\mathcal{P}\tilde{u}$. Since $u_* \le \tilde{u}$, the exterior integral satisfies $\int_{|\gamma|\ge \delta} u_*\nu \le \int_{|\gamma|\ge \delta} \tilde{u}\nu$, hence $-\cL^\nu_\delta(g_0,t_0; \hat{\varphi}, u_*) \ge -\cL^\nu \tilde{u}(g_0,t_0)$. As $\tilde{u}$ is a classical supersolution, $-\cL^\nu \tilde{u} \ge 0$, contradicting \eqref{eq:fail1}; the obstacle case is similar. Thus there is a gap $m_1 := \tilde{u}(g_0,t_0) - \hat{\varphi}(g_0,t_0) > 0$.

Since the minimum of $u_* - \hat{\varphi}$ is strict in $\bar{B}_\delta$, the lower semicontinuous difference $u_* - \hat{\varphi}$ attains a strict minimum $m_2 > 0$ on the boundary $\partial B_\delta(g_0,t_0)$. Moreover, since $\nu$ is a Radon measure on $\R\setminus\{0\}$, we may choose the splitting radius $\delta > 0$ so that it excludes the origin and its boundary carries zero measure, $\nu(\{z \in \R\setminus\{0\} \mid |\gamma(g_0, z)| = \delta\}) = 0$; then the exterior measure is finite ($\nu_\delta < \infty$) and the integration domain depends continuously on the state.

We fix $0 < \mu < \min\bigl(m_1, \, m_2, \, \eta, \, \frac{\eta}{r + \nu_\delta}\bigr)$ and define the local modification
\[
  w(g,t) :=
  \begin{cases}
    \max\bigl(u(g,t),\,\hat{\varphi}(g,t)+\mu\bigr) & \text{inside } B_\delta(g_0,t_0),\\
    u(g,t) & \text{outside } B_\delta(g_0,t_0).
  \end{cases}
\]
By the gap $m_1$ and continuity, for $\delta$ small we have $\hat{\varphi}(g,t) + \mu \le \tilde{u}(g,t)$ in the neighborhood, so $w \le \tilde{u}$. The lower bound $w \ge \underline{u}$ holds since $w \ge u \ge \underline{u}$. Since $u_* - \hat{\varphi} \ge m_2 > \mu$ on $\partial B_\delta$, lower semicontinuity extends this inequality to a collar around $\partial B_\delta$, and since $u \ge u_*$, also $u > \hat{\varphi} + \mu$ there. Thus $w = u$ near the boundary, with no pasting singularity, and $w$ is upper semicontinuous.

We now check that the test branch $\tilde{\varphi} := \hat{\varphi} + \mu$ is a subsolution in $B_\delta$, in two cases.

\textbf{Case 1.} Suppose \eqref{eq:fail1} holds. Under the Barles--Imbert splitting~\cite[Definition $4$]{BI08}, jumps inside the neighborhood ($|\gamma| < \delta$) are evaluated against $\tilde{\varphi}$. The shift $\mu$ cancels in the compensated difference, $\tilde{\varphi}(g+\gamma) - \tilde{\varphi}(g) - \gamma\tilde{\varphi}'(g)\mathbf{1}_{\{|z|\le 1\}} = \hat{\varphi}(g+\gamma) - \hat{\varphi}(g) - \gamma\hat{\varphi}'(g)\mathbf{1}_{\{|z|\le 1\}}$, so the small-jump part is the same as for $\hat{\varphi}$.
Jumps outside the neighborhood ($|\gamma| \ge \delta$) are evaluated against $w$. Since $w \ge u_*$, evaluating the subtracted exterior integral on $u_*$ gives an upper bound. Subtracting the base value $\tilde{\varphi}(g_0) = \hat{\varphi}(g_0) + \mu$ leaves a shift penalty $+\mu \int_{|\gamma|\ge \delta} \nu = +\mu\nu_\delta$, and the discount term $-r\tilde{\varphi}$ adds $+r\mu$, so the total penalty is $\mu(r + \nu_\delta) < \eta$. At the center,
\[
  -\cL^\nu_\delta(g_0, t_0; \tilde{\varphi}, u_*) \le -\cL^\nu_\delta(g_0, t_0; \hat{\varphi}, u_*) + \mu(r+\nu_\delta) \le -\eta + \mu(r+\nu_\delta) < 0.
\]
Since $u_* \ge 0$ and the boundary of the integration domain has $\nu$-measure zero, Fatou's lemma applies to the exterior integral $\int_{|\gamma|\ge\delta} u_*(g+\gamma)\nu$, which is therefore lower semicontinuous in the moving domain. Hence $-\cL^\nu_\delta(\cdot; \tilde{\varphi}, u_*)$ is the sum of a continuous interior term and an upper semicontinuous exterior term, so it is upper semicontinuous; being negative at $(g_0,t_0)$, it is negative in a small ball $B_\delta$. Since $w \ge u_*$ gives $-\cL^\nu_\delta(\tilde{\varphi}, w) \le -\cL^\nu_\delta(\tilde{\varphi}, u_*)$, the operator on $w$ is negative there.

\textbf{Case 2.} Suppose \eqref{eq:fail2} holds. In $B_\delta$, whether $-g$ lies inside or outside the neighborhood, the construction gives $w(-g,t) \ge u_*(-g,t)$, so
\[
  \tilde{\varphi}(g,t) - w(-g,t) - \psi(g,t) \le \hat{\varphi}(g,t) + \mu - u_*(-g,t) - \psi(g,t).
\]
This is an upper semicontinuous expression, negative at $(g_0,t_0)$, hence negative throughout $B_\delta$.

In either case one branch of the minimum is negative, so $w$ is a subsolution, $w \in \mathcal{S}$.

On the other hand, since $u_*$ is the lower semicontinuous envelope of $u$, there is a sequence $(g_n, t_n) \to (g_0, t_0)$ with $u(g_n, t_n) \to u_*(g_0, t_0)$. As $w \in \mathcal{S}$ and $u = \sup \mathcal{S}$, we have $u(g_n, t_n) \ge w(g_n, t_n)$, and in $B_\delta$,
\[
  u(g_n, t_n) \ge \hat{\varphi}(g_n, t_n) + \mu.
\]
Letting $n \to \infty$, by continuity of $\hat{\varphi}$, $u_*(g_0, t_0) \ge u_*(g_0, t_0) + \mu$. Since $u_*$ is finite, this gives $0 \ge \mu$, contradicting $\mu > 0$. Hence $u_*$ is a supersolution on $\R \times [0,T)$.

Finally, we apply the comparison principle (Theorem~\ref{thm:comp-pide}). At $t=T$, $\alpha(T)=0$, so $\psi(g,T) = -c$, and both barriers vanish, $\underline{u}(g,T) = \max(0,-c) = 0$ and $\tilde{u}(g,T) = 0$. Since $\underline{u} \le u_* \le u^* \le \tilde{u}$, we get $u^*(g,T) = u_*(g,T) = 0$, the terminal condition. The comparison principle then gives $u^* \leq u_*$, and since $u_* \leq u^*$ by definition, $u^* = u_* = u$. Hence $u$ is continuous and is the unique viscosity solution.
\end{proof}
\begin{example}[Kou Jump-Diffusion Process]
As an example with state-dependent jump amplitude ($L_\gamma \neq 0$) satisfying the growth condition \eqref{eq:growth_cond}, we take a variant of the Kou double exponential jump-diffusion model \cite{Kou2002}.

Assume the disagreement process is subject to shocks arriving as a Poisson process with constant finite total intensity $\lambda > 0$, with jump size $z$ drawn from an asymmetric double exponential distribution. The L\'evy measure is
\[
  \nu(dz) = \lambda \left( p \eta_1 e^{-\eta_1 z} \mathbf{1}_{\{z>0\}} + q \eta_2 e^{\eta_2 z} \mathbf{1}_{\{z<0\}} \right) dz,
\]
where $p, q \ge 0$ with $p+q=1$ represent the probabilities of upward and downward jumps, and $\eta_1, \eta_2 > 0$ control the decay of the jump tails.

First, we determine the intensity of large jumps $\lambda_{Levy} := \nu(\{z \in \R\setminus\{0\} \mid |z| > 1\})$. Integrating the L\'evy measure over the domain $|z| > 1$ yields
\[
  \lambda_{Levy}  = \lambda \left( \int_1^\infty p \eta_1 e^{-\eta_1 z} dz + \int_{-\infty}^{-1} q \eta_2 e^{\eta_2 z} dz \right) = \lambda (p e^{-\eta_1} + q e^{-\eta_2}).
\]

To reflect that larger bubbles undergo larger absolute corrections, we take the jump amplitude proportional to the current state
\[
  \gamma(g, z) = c \cdot g \cdot z,
\]
for a scaling constant $c > 0$. Since the amplitude depends on $g$, the Lipschitz constant is non-zero ($L_\gamma \neq 0$).
We compute it from the integrated second moment of the state difference against the entire Kou measure. For any $x, y \in \R$,
\begin{multline*}
  \int_{\R\setminus\{0\}} |\gamma(x,z) - \gamma(y,z)|^2 \nu(dz) 
  = \int_{\R\setminus\{0\}} c^2 |x-y|^2 z^2 \nu(dz) =\\
  = c^2 |x-y|^2 \lambda \int_{\R\setminus\{0\}} z^2 \left( p \eta_1 e^{-\eta_1 z} \mathbf{1}_{\{z>0\}} + q \eta_2 e^{\eta_2 z} \mathbf{1}_{\{z<0\}} \right) dz.
\end{multline*}
By the standard moments of the exponential distribution, the second moment equals $\frac{2p}{\eta_1^2} + \frac{2q}{\eta_2^2}$, therefore
\[
  L_\gamma = c \sqrt{2\lambda \left( \frac{p}{\eta_1^2} + \frac{q}{\eta_2^2} \right)}.
\]
Substituting $\|\lambda\|_\infty$ and $L_\gamma$ into the growth condition \eqref{eq:growth_cond} gives
\[
  r + \rho > \sqrt{\lambda (p e^{-\eta_1} + q e^{-\eta_2})} \Biggl[ c \sqrt{2\lambda \left( \frac{p}{\eta_1^2} + \frac{q}{\eta_2^2} \right)} \Biggr] +  \frac{1}{2}\Biggl[ c^2 (2\lambda) \left( \frac{p}{\eta_1^2} + \frac{q}{\eta_2^2} \right) \Biggr],
\]
that is, after simplification we have
\[
  r + \rho > \sqrt{2} c \lambda \sqrt{(p e^{-\eta_1} + q e^{-\eta_2})\left( \frac{p}{\eta_1^2} + \frac{q}{\eta_2^2} \right)} +  c^2 \lambda \left( \frac{p}{\eta_1^2} + \frac{q}{\eta_2^2} \right).
\]
Since $r > 0$ and $\rho \ge 0$ are fixed, this inequality defines a non-empty parameter range; it holds, for instance, when the jump scale $c$ is small, or the tail parameters $\eta_1, \eta_2$ are large (so that both the jump variance and the large-jump intensity are small). In this range, the non-local obstacle PIDE \eqref{eq:bubble-pide} has a unique viscosity solution despite the state-dependent shocks.
\end{example}

\section{Convergence of the Numerical Scheme}\label{sec:convergence}

Having shown that the non-local obstacle PIDE \eqref{eq:bubble-pide} has a unique viscosity solution, we now study its numerical approximation. To prove that the discrete scheme converges to this solution, we use the framework of Barles and Souganidis \cite{BarlesSouganidis1991}. Since that framework is stated for bounded domains and local operators, we extend it to state-dependent L\'evy jumps by the Barles--Imbert splitting \cite{BI08}, and to the linear growth setting on an unbounded domain.

\subsection{Explicit Discretization Scheme}

Let the domain be truncated and discretized on a uniform spatio-temporal grid $\mathcal{G}_h = \{(g_i, t_n) \in \mathbb{R} \times [0, T] \mid g_i = i\Delta g, t_n = n\Delta t\}$, where $h = (\Delta g, \Delta t)$ denotes the discretization parameters and $\Delta t = T/N$. We denote the numerical approximation of the bubble value function at a node $(g_i, t_n)$ by $u_i^n := u^h(g_i, t_n)$.

We march backward in time from maturity $T$, with the discrete terminal condition $u_i^N = 0$. Since the non-local integral operator couples all grid nodes, a fully implicit discretization would require inverting a dense matrix at each step. To keep the scheme stable and efficient, we use the Implicit--Explicit scheme for jump-diffusions (see, e.g., Cont and Voltchkova \cite{ContVoltchkova2005}): the local differential terms are taken implicitly and the non-local integral explicitly.

So that the implicit matrix has the M-matrix property, which is needed for monotonicity and convergence \cite{dHalluin2005}, we split the drift by its direction (upwind), $g_i = g_i^+ - g_i^-$ with $g_i^+ := \max(g_i, 0)$ and $g_i^- := \max(-g_i, 0)$. The discrete operator $S_{\mathcal{L}}$ is
\begin{multline}\label{eq:discrete_operator}
S_{\mathcal{L}}(h, g_i, t_n, u_i^n, u) := \frac{u_i^n - u_i^{n+1}}{\Delta t}  + \rho g_i^+ \frac{u_i^n - u_{i-1}^n}{\Delta g} - \rho g_i^- \frac{u_{i+1}^n - u_i^n}{\Delta g} -\\ - \frac{\sigma^2}{2} \frac{u_{i+1}^n - 2u_i^n + u_{i-1}^n}{\Delta g^2} + r u_i^n - \mathcal{I}^h_i[u^{n+1}],
\end{multline}
where $\mathcal{I}^h_i$ is a consistent quadrature of the L\'evy measure with nodes $z_k$ and positive weights $w_k > 0$, over the truncated exterior domain,
\begin{multline}
\mathcal{I}^h_i[u^{n+1}] := \sum_k w_k \Bigl[ u(g_i + \gamma_k, t_{n+1}) - u_i^{n+1} - \Bigl( \gamma_k^+ \frac{u_i^{n+1} - u_{i-1}^{n+1}}{\Delta g} - \gamma_k^- \frac{u_{i+1}^{n+1} - u_i^{n+1}}{\Delta g} \Bigr) \mathbf{1}_{\{|z_k|\le 1\}} \Bigr].
\end{multline}

\begin{remark}
As noted by Jakobsen and Karlsen \cite{JakobsenKarlsen2005} for control problems with jumps, a central difference for the small-jump compensator can produce positive off-diagonal coefficients and break the elliptic structure of the scheme. To keep monotonicity, the compensator is discretized by the first-order upwind difference above, with $\gamma_k^+ := \max(\gamma_k, 0)$ and $\gamma_k^- := \max(-\gamma_k, 0)$, where $\gamma_k := \gamma(g_i, z_k)$.
\end{remark}

To account for the early exercise feature, we write the free-boundary problem as a linear complementarity problem, and the full scheme uses the minimum operator,
\begin{equation}
S\bigl(h, g_i, t_n, u_i^n, u\bigr) := \min\Bigl( S_{\mathcal{L}}\bigl(h, g_i, t_n, u_i^n, u\bigr),\; u_i^n - u_{-i}^n - \psi_i^n \Bigr) = 0,
\end{equation}
where $u_{-i}^n := u(-g_i, t_n)$ represents the reflected valuation of the opposing investor group, and $\psi_i^n := \psi(g_i, t_n)$ is the execution payoff.

\subsection{Properties of the Scheme}

For the numerical solutions to converge to the viscosity solution, the discrete operator $S$ must be stable, monotone, and consistent.

\begin{definition}[Stability]
The numerical scheme is \emph{stable} if, for any sufficiently small discretization parameter $h$, there exists a constant $C > 0$ independent of $h$ such that the numerical solution possesses at most linear growth globally
\[
|u^h(g, t)| \le C(1 + |g|), \quad \forall (g,t) \in \mathcal{G}_h.
\]
\end{definition}

\begin{definition}[Monotonicity]
The numerical scheme is \emph{monotone} if, for any evaluation point $(g, t)$, any scalar $r \in \mathbb{R}$, and any two grid functions $u, v$ satisfying $u \le v$ globally on $\mathcal{G}_h$, the operator satisfies
\[
S(h, g, t, r, u) \ge S(h, g, t, r, v).
\]
\end{definition}

\begin{definition}[Consistency]
The scheme is \emph{consistent} with the obstacle PIDE if, for any point $(g_0, t_0) \in \mathbb{R} \times [0, T)$ and any smooth test function $\varphi \in C^{2,1}(\mathbb{R} \times [0, T])$ possessing at most linear growth, the discrete operator limits satisfy
\begin{multline*}
\limsup_{\substack{h \to 0 \\ (g, t) \to (g_0, t_0) \\ \xi \to 0}} S\bigl(h, g, t, \varphi(g, t) + \xi, \varphi + \xi\bigr) \le \min\bigl(-\mathcal{L}^\nu \varphi(g_0, t_0),\; \varphi(g_0, t_0) - \varphi(-g_0, t_0) - \psi(g_0, t_0)\bigr),
\end{multline*}
and
\begin{multline*}
\liminf_{\substack{h \to 0 \\ (g, t) \to (g_0, t_0) \\ \xi \to 0}} S\bigl(h, g, t, \varphi(g, t) + \xi, \varphi + \xi\bigr) \ge \min\bigl(-\mathcal{L}^\nu \varphi(g_0, t_0),\; \varphi(g_0, t_0) - \varphi(-g_0, t_0) - \psi(g_0, t_0)\bigr).
\end{multline*}
\end{definition}
We now prove that our discretization has these properties.

\begin{lemma}[Monotonicity]\label{lem:monotonicity}
Assume the total discrete quadrature mass is finite, denoted by $\lambda_h := \sum_k w_k < \infty$. The  scheme $S$ is monotone provided the time step satisfies the Courant--Friedrichs--Lewy (CFL) condition
\begin{equation}\label{eq:cfl_condition}
\Delta t \le \min_{i} \Biggl( \sum_k w_k \biggl[ 1 + \frac{|\gamma(g_i, z_k)|}{\Delta g} \mathbf{1}_{\{|z_k|\le 1\}} \biggr] \Biggr)^{-1}.
\end{equation}
\end{lemma}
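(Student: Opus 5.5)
The plan is to verify the monotonicity definition directly. For fixed $(g_i,t_n)$ and a fixed value $r=u_i^n$ at the evaluation node, we show that $S$ is non-increasing in every other grid value of $u$. Since $S=\min(S_{\mathcal{L}},\, u_i^n-u_{-i}^n-\psi_i^n)$, and the minimum of two functions that are non-increasing in the neighbouring values is again non-increasing, it suffices to treat each argument of the minimum separately. The obstacle branch is immediate: with $u_i^n$ frozen, $u_i^n-u_{-i}^n-\psi_i^n$ depends on the rest of the grid function only through $-u_{-i}^n$, which has a negative coefficient. Hence $u\le v$ gives a larger obstacle branch for $u$ than for $v$.

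For $S_{\mathcal{L}}$ I would collect coefficients. The arguments are the implicit neighbours $u_{i\pm1}^n$ and every value of $u^{n+1}$, including $u_i^{n+1}$, which is not the frozen value.

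\textbf{Implicit level.} The coefficient of $u_{i-1}^n$ is $-\rho g_i^+/\Delta g-\sigma^2/(2\Delta g^2)\le 0$. The coefficient of $u_{i+1}^n$ is $-\rho g_i^-/\Delta g-\sigma^2/(2\Delta g^2)\le 0$. Both signs come from the upwind splitting, and no condition is needed here.

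\textbf{Explicit level, off-centre values.} The explicit level enters through $-u_i^{n+1}/\Delta t-\mathcal{I}^h_i[u^{n+1}]$. The values $u(g_i+\gamma_k,t_{n+1})$ appear with coefficient $-w_k<0$. If $g_i+\gamma_k$ is off-grid, the value is obtained by linear interpolation with non-negative weights, so the signs are preserved. The compensator contributes $+w_k\gamma_k^+/\Delta g$ to the coefficient of $u_i^{n+1}$ and $-w_k\gamma_k^+/\Delta g$ to that of $u_{i-1}^{n+1}$. Because it is subtracted inside $\mathcal{I}^h_i$, which is itself subtracted, it then contributes $-w_k\gamma_k^+/\Delta g\le0$ to the coefficient of $u_{i-1}^{n+1}$ in $S_{\mathcal{L}}$. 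The analogous computation with $\gamma_k^-$ gives $-w_k\gamma_k^-/\Delta g\le 0$ for the coefficient of $u_{i+1}^{n+1}$. This is the reason for the upwind compensator in the preceding remark.

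\textbf{Explicit level, centre value.} This is the only delicate coefficient, that of $u_i^{n+1}$ in $S_{\mathcal{L}}$:
\[
-\frac{1}{\Delta t}+\sum_k w_k\Bigl[1+\frac{\gamma_k^++\gamma_k^-}{\Delta g}\mathbf{1}_{\{|z_k|\le1\}}\Bigr]
=-\frac{1}{\Delta t}+\sum_k w_k\Bigl[1+\frac{|\gamma(g_i,z_k)|}{\Delta g}\mathbf{1}_{\{|z_k|\le1\}}\Bigr].
\]
It is non-positive exactly when $\Delta t$ is at most the reciprocal of the bracketed sum. Taking the minimum over $i$ gives \eqref{eq:cfl_condition}. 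The sum is finite because $\lambda_h<\infty$ and only finitely many nodes carry the compensator.

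A small bookkeeping issue arises if $g_i+\gamma_k$ lands on or is interpolated from the node $g_i$ itself. In that case the corresponding part of its coefficient is $-w_k\theta$ with $\theta\in[0,1]$, which only helps. I would note this and otherwise treat the shifted values as distinct nodes.

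With every coefficient non-positive, $S_{\mathcal{L}}(h,g_i,t_n,r,\cdot)$ is non-increasing in the grid function. Combined with the obstacle branch, this gives $S(h,g,t,r,u)\ge S(h,g,t,r,v)$ whenever $u\le v$. I expect the main obstacle to be the sign accounting for the explicit compensator and its contribution to the diagonal coefficient of $u_i^{n+1}$. This is precisely what produces the state-dependent $|\gamma|/\Delta g$ term in the CFL condition.
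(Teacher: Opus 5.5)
Your proof is correct and is the same argument as the paper's. The paper phrases the coefficient check in matrix form: $\mathcal{M}$ has non-positive off-diagonal entries, $\mathcal{N}\ge 0$, the CFL condition comes from the diagonal entry of $\mathcal{N}$, and the obstacle branch is handled through the sign of $u_{-i}^n$. It also notes that at $i=0$ the obstacle branch reduces to the constant $c>0$, and it proves diagonal dominance of $\mathcal{M}$; that is not needed for monotonicity itself, but it makes the implicit step uniquely solvable.
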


\begin{proof}
Following Cont and Voltchkova \cite{ContVoltchkova2005}, we write the operator $S_{\mathcal{L}}$ in matrix form. Let $U^m$ be the vector of grid values at time $t_m$. We split the operator into an implicit differential part $\mathcal{M}$ at the unknown time $t_n$ and an explicit integral part $\mathcal{N}$ at the known time $t_{n+1}$,
$$
S_{\mathcal{L}}\bigl(h, g_i, t_n, u_i^n, u\bigr) = [\mathcal{M} U^n]_i - [\mathcal{N} U^{n+1}]_i.
$$
The Barles--Souganidis monotonicity condition requires $S_{\mathcal{L}}$ to be non-increasing in all off-diagonal grid values. In matrix form this amounts to two properties: $\mathcal{M}$ is an M-matrix, with positive diagonal and non-positive off-diagonal entries, which gives unconditional stability of the local terms (see, e.g., \cite{BermanPlemmons1994}); and $\mathcal{N}$ is non-negative, $\mathcal{N}_{i,j} \ge 0$ for all $i,j$, so that the explicit step $-[\mathcal{N} U^{n+1}]_i$ enters with a non-positive sign.

Collecting the terms at $t_n$ in \eqref{eq:discrete_operator}, the $i$-th row of $\mathcal{M}$ is
\begin{multline*}
[\mathcal{M} U^n]_i = \left( \frac{1}{\Delta t} + \frac{\rho g_i^+}{\Delta g} + \frac{\rho g_i^-}{\Delta g} + \frac{\sigma^2}{\Delta g^2} + r \right) u_i^n - \left( \frac{\rho g_i^-}{\Delta g} + \frac{\sigma^2}{2\Delta g^2} \right) u_{i+1}^n - \left( \frac{\rho g_i^+}{\Delta g} + \frac{\sigma^2}{2\Delta g^2} \right) u_{i-1}^n.
\end{multline*}
Consider the off-diagonal entries of $\mathcal{M}$. Since $\sigma^2 > 0$ and $\Delta g > 0$, the central-difference diffusion weights are positive, and the upwind treatment of the drift gives $g_i^+ := \max(g_i,0) \ge 0$ and $g_i^- := \max(-g_i,0) \ge 0$.

Hence the off-diagonal entries $\mathcal{M}_{i, i+1} = -\bigl( \frac{\rho g_i^-}{\Delta g} + \frac{\sigma^2}{2\Delta g^2} \bigr)$ and $\mathcal{M}_{i, i-1} = -\bigl( \frac{\rho g_i^+}{\Delta g} + \frac{\sigma^2}{2\Delta g^2} \bigr)$ are negative. To see that $\mathcal{M}$ is a strictly diagonally dominant M-matrix, compare the sum of the off-diagonal absolute values with the diagonal,
$$\sum_{j \neq i} |\mathcal{M}_{i,j}| = |\mathcal{M}_{i,i+1}| + |\mathcal{M}_{i,i-1}| = \frac{\rho (g_i^+ + g_i^-)}{\Delta g} + \frac{\sigma^2}{\Delta g^2} = \frac{\rho |g_i|}{\Delta g} + \frac{\sigma^2}{\Delta g^2}.$$
Subtracting this from the diagonal $\mathcal{M}_{i,i}$ leaves the time-derivative and discount terms,
$$|\mathcal{M}_{i,i}| - \sum_{j \neq i} |\mathcal{M}_{i,j}| = \frac{1}{\Delta t} + r.$$
Since $\Delta t > 0$ and $r > 0$, this is positive ($\ge \frac{1}{\Delta t} > 0$). So $\mathcal{M}$ has positive diagonal, negative off-diagonal, and is strictly diagonally dominant. By standard matrix theory \cite{BermanPlemmons1994}, such a matrix is a non-singular M-matrix, so the implicit step is monotone.

Next we collect the explicit terms at $t_{n+1}$, namely the backward-Euler time derivative and the jump integral,
$$[\mathcal{N} U^{n+1}]_i = \frac{1}{\Delta t} u_i^{n+1} + \mathcal{I}_i^h[U^{n+1}].$$
Expanding the quadrature $\mathcal{I}_i^h$ and grouping by grid values, the entries of $\mathcal{N}$ are
\begin{multline*}
[\mathcal{N} U^{n+1}]_i = \left( \frac{1}{\Delta t} - \sum_k w_k - \sum_k w_k \frac{\gamma_k^+ + \gamma_k^-}{\Delta g} \mathbf{1}_{\{|z_k|\le 1\}} \right) u_i^{n+1} \\
+ \sum_k w_k u(g_i + \gamma_k, t_{n+1}) + \sum_k w_k \frac{\gamma_k^-}{\Delta g} \mathbf{1}_{\{|z_k|\le 1\}} u_{i+1}^{n+1} +\\ \sum_k w_k \frac{\gamma_k^+}{\Delta g} \mathbf{1}_{\{|z_k|\le 1\}} u_{i-1}^{n+1}.
\end{multline*}
The quadrature weights are positive ($w_k > 0$), and the upwind compensator gives $\gamma_k^+ \ge 0$, $\gamma_k^- \ge 0$, so the off-diagonal entries of $\mathcal{N}$ are non-negative.

For $\mathcal{N}$ to be non-negative, the diagonal entry must also be non-negative. Using $\gamma_k^+ + \gamma_k^- = |\gamma_k|$,
$$\mathcal{N}_{i,i} = \frac{1}{\Delta t} - \sum_k w_k \biggl[ 1 + \frac{|\gamma_k|}{\Delta g} \mathbf{1}_{\{|z_k|\le 1\}} \biggr] \ge 0.$$
Solving for $\Delta t$ and taking the minimum over the grid gives the CFL condition \eqref{eq:cfl_condition}. Under it, $\mathcal{N} \ge 0$ and the explicit step is monotone.

Finally, consider the obstacle term $S_{obs}(r, u) := r - u_{-i}^n - \psi_i^n$ with $r = u_i^n$. Monotonicity requires $S_{obs}(r, u) \ge S_{obs}(r, v)$ whenever $u \le v$. For $i \neq 0$, $S_{obs}(r, u) - S_{obs}(r, v) = v_{-i}^n - u_{-i}^n \ge 0$, as required. At the origin ($i = 0$) the reflection maps to the central node, giving $r - r - \psi(0,t) = c$; since $c > 0$ this branch is a positive constant, and as the scheme takes $\min(S_{\mathcal{L}}, S_{obs}) = 0$, it is inactive at the origin. Since both $S_{\mathcal{L}}$ and $S_{obs}$ are monotone, so is $S$.
\end{proof}

\begin{lemma}[Consistency]\label{lem:consistency}
The numerical operator $S$ is consistent with the non-local PIDE \eqref{eq:bubble-pide}.
\end{lemma}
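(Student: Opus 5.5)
The plan is to prove the two inequalities in the definition of consistency. Both arguments share a single pointwise computation, because $\min(\cdot,\cdot)$ is continuous. Fix $(g_0,t_0)$ with $t_0<T$ and a test function $\varphi\in C^{2,1}$ of at most linear growth. Take sequences $h\to0$, grid points $(g_i,t_n)\to(g_0,t_0)$ and $\xi\to0$. Once we show that each branch of $S(h,g_i,t_n,\varphi(g_i,t_n)+\xi,\varphi+\xi)$ converges to the matching branch of the limit operator, both the $\limsup$ and $\liminf$ statements follow, with equality.

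The obstacle branch is immediate. The shift $\xi$ cancels in $u_i^n-u_{-i}^n$, leaving $\varphi(g_i,t_n)-\varphi(-g_i,t_n)-\psi(g_i,t_n)$, and this converges to $\varphi(g_0,t_0)-\varphi(-g_0,t_0)-\psi(g_0,t_0)$ by continuity of $\varphi$ and $\psi$.

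For $S_{\mathcal L}$ we treat the terms one at a time. First, the shift $\xi$ cancels in every difference quotient and in every compensated jump difference. The only place it survives is $r\xi$, which vanishes as $\xi\to0$. Second, each local term is handled by Taylor's theorem with remainders controlled uniformly on a compact neighbourhood of $(g_0,t_0)$:
\begin{itemize}
\item the time quotient $(\varphi(g_i,t_n)-\varphi(g_i,t_{n+1}))/\Delta t=-\varphi_t+O(\Delta t)$;
\item the upwind drift quotients give $\rho g_i\varphi_g+O(|g_i|\Delta g)$, since $g_i^+-g_i^-=g_i$;
\item the central second difference gives $\varphi_{gg}+O(\Delta g^2)$, or $o(1)$ if only $C^{2}$ regularity in $g$ is assumed.
\end{itemize}
Because $g_i$ stays bounded, all remainders vanish, and the local part converges to $-\varphi_t+\rho g_0\varphi_g-\tfrac{\sigma^2}{2}\varphi_{gg}+r\varphi$ at $(g_0,t_0)$.

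The main obstacle is the quadrature term $\mathcal I^h_i[\varphi^{n+1}]$. Its convergence to $\int[\varphi(g_0+\gamma,t_0)-\varphi(g_0,t_0)-\gamma\varphi_g\mathbf 1_{|z|\le1}]\nu(dz)$ needs an explicit assumption, which we state as part of ``consistent quadrature''. For every bounded continuous $F(z)$ with $|F(z)|\le C\min(|\gamma|^2,1+|\gamma|)$, the weighted sums $\sum_kw_kF(z_k)$ must converge to $\int F\,d\nu$. This requirement is uniform in $g$ near $g_0$ and covers the truncated region near $z=0$ as well as the tails, with truncation radius going to zero and the truncated domain growing as $h\to0$. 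The integral is split into three parts.

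\textbf{Large jumps} ($|z_k|>1$). There is no compensator, and the summand $\varphi(g_i+\gamma_k,t_{n+1})-\varphi(g_i,t_{n+1})$ is bounded by $C(1+|\gamma_k|)$ by the linear growth of $\varphi$. The bound $\int_{|z|>1}|\gamma|\,d\nu\le\sqrt\lambda\,(L_\gamma|g|+\sqrt{C_{large}})$, obtained as in Lemma~\ref{lem:super-pide}, gives an integrable dominating function. Joint continuity in $(g,t)$ then follows by dominated convergence, using the continuity of $\gamma$ in $g$ from \eqref{eq:param}.

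\textbf{Small jumps} ($|z_k|\le1$). The compensator is the upwind quotient, which is not $\varphi_g$. We write $\gamma_k^{\pm}\cdot(\text{one-sided quotient})=\gamma_k\varphi_g(g_i)+O(|\gamma_k|\Delta g)$. The resulting error is then $\sum_kw_k|\gamma_k|\mathbf 1_{|z_k|\le1}\,\Delta g$. This error is the delicate point for infinite-activity measures, where $\sum w_k|\gamma_k|$ may blow up as the truncation shrinks. We therefore impose, as in the CFL quantity \eqref{eq:cfl_condition}, that $\Delta g\sum_kw_k|\gamma_k|\mathbf 1_{|z_k|\le1}\to0$. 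This is the analogue of the usual requirement that the truncation radius $\epsilon_h$ and $\Delta g$ be coupled. The remaining term $\varphi(g+\gamma)-\varphi(g)-\gamma\varphi_g$ is $O(\gamma^2)$ uniformly, so \eqref{eq:param} and the quadrature assumption give convergence.

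\textbf{Neglected region} $|z|<\epsilon_h$. Its contribution is bounded by $C\int_{|z|<\epsilon_h}|\gamma|^2d\nu\to0$, by the square integrability in \eqref{eq:param}.

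Combining the three parts with the local terms gives $S_{\mathcal L}\to-\mathcal L^\nu\varphi(g_0,t_0)$. Applying the continuity of the minimum then yields both consistency inequalities.
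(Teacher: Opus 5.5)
Your proposal is correct and follows the same route as the paper's proof. The shift $\xi$ cancels everywhere except in $r\xi$, the local difference quotients are handled by Taylor expansion, the obstacle branch follows from continuity of $\varphi$ and $\psi$, and the conclusion follows from continuity of $\min$. Where the paper simply asserts an $\mathcal{O}(\Delta g^\kappa)$ quadrature error, you instead state explicitly what ``consistent quadrature'' must mean, including the coupling $\Delta g\sum_k w_k|\gamma_k|\mathbf{1}_{\{|z_k|\le1\}}\to0$ for the upwind compensator, which the paper's claim tacitly needs for infinite-variation measures such as NIG.
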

\begin{proof}
Let $\xi \in \mathbb{R}$ be a constant shift. We evaluate the scheme at the test function $\varphi + \xi$. The shift $\xi$ cancels in all spatial and temporal finite differences.

Taylor expansion at $(g_i, t_n)$ in \eqref{eq:discrete_operator} gives the local truncation errors: the backward time derivative, upwind drift, and central diffusion give errors of order $\mathcal{O}(\Delta t)$, $\mathcal{O}(\Delta g)$, and $\mathcal{O}(\Delta g^2)$.
For the jump integral, quadrature of $\nu(dz)$ against a smooth function of linear growth gives a truncation error $\mathcal{O}(\Delta g^\kappa)$, with $\kappa$ depending on the quadrature rule and the L\'evy tails,
\[
-\mathcal{I}^h_i[\varphi^{n+1}] = -\int_{\mathbb{R}_0} \Bigl[ \varphi(g_i+\gamma, t_n) - \varphi - \gamma\varphi_g \mathbf{1}_{\{|z|\le 1\}} \Bigr]\nu(dz) + \mathcal{O}(\Delta t) + \mathcal{O}(\Delta g^\kappa).
\]
The only place where the shift $\xi$ survives is the discount term, $r(\varphi_i^n + \xi) = r\varphi_i^n + r\xi$. Summing the terms,
\[
S_{\mathcal{L}}(h, g_i, t_n, \varphi_i^n+\xi, \varphi+\xi) = -\mathcal{L}^\nu \varphi(g_i, t_n) + r\xi + \mathcal{E}(h),
\]
with $\lim_{h \to 0} \mathcal{E}(h) = 0$. Letting $h \to 0$ and $\xi \to 0$ gives $-\mathcal{L}^\nu \varphi$.

For the obstacle branch, $\varphi + \xi$ gives $\varphi_i^n - \varphi_{-i}^n - \psi_i^n$, in which the shift cancels. Since $\min(\cdot, \cdot)$ is Lipschitz, the limit of the full operator is the continuous expression in the consistency definition.
\end{proof}

\begin{lemma}[Stability]\label{lem:stability}
Under the strict CFL condition established in Lemma \ref{lem:monotonicity}, there exists a constant $C > 0$ independent of $h$ such that the discrete solution satisfies $0 \le u_i^n \le C(1 + |g_i|)$ for all $(g_i, t_n) \in \mathcal{G}_h$.
\end{lemma}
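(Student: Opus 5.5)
The plan is a discrete comparison argument driven by the monotonicity of Lemma~\ref{lem:monotonicity}. We show by backward induction in $n$ that the grid solution is trapped between two discrete barriers: the zero function below, and the restriction of the continuous supersolution $\tilde u(g,t)=\alpha(t)f(g)$ of Lemma~\ref{lem:super-pide} above, possibly enlarged by a constant. At each time level the scheme is solved for $U^n$ given $U^{n+1}$. It has the form $\min(\mathcal{M}U^n-\mathcal{N}U^{n+1},\,U^n-RU^n-\Psi^n)=0$, where $R$ is the reflection $i\mapsto -i$. So the key tool is a one-step discrete comparison. Suppose $\mathcal{N}\ge 0$ (this is the CFL condition \eqref{eq:cfl_condition}) and $\mathcal{M}$ is a strictly diagonally dominant M-matrix. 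If $W$ is a discrete supersolution ($\min(\mathcal{M}W-\mathcal{N}W^{n+1},W-RW-\Psi)\ge 0$) with $W^{n+1}\ge U^{n+1}$, then $W\ge U^n$, and symmetrically for subsolutions.

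To prove this one-step comparison, take an index $i$ where $U^n_i-W_i$ is maximal, and suppose this maximum $m$ is positive. We split into the two branches of the min at $i$ for $U^n$.

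\textbf{Equation branch.} Here $[\mathcal{M}(U^n-W)]_i\le[\mathcal{N}(U^{n+1}-W^{n+1})]_i\le 0$. Strict diagonal dominance gives $[\mathcal{M}(U^n-W)]_i\ge(\tfrac1{\Delta t}+r)m>0$, which is a contradiction.

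\textbf{Obstacle branch.} Here $U^n_i-U^n_{-i}\le\psi_i$ and $W_i-W_{-i}\ge\psi_i$. Subtracting yields $(U-W)_{-i}\ge m$, so $-i$ is also a maximizer. At $-i$ the obstacle branch cannot be active as well, since adding the two obstacle inequalities gives $0\le\psi_i+\psi_{-i}=-2c<0$; this mirrors the reflection step in Theorem~\ref{thm:comp-pide}. So the equation branch is active at $-i$, and we are back in the first case. The node $i=0$ is handled by the observation at the end of Lemma~\ref{lem:monotonicity}. On the truncated grid, the boundary nodes are assigned barrier values, so the maximum is taken over interior nodes.

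\textbf{Lower bound.} With $W\equiv 0$: we have $\mathcal{M}0-\mathcal{N}0=0$ and $0-0-\psi_i\le 0$, so $0$ is a discrete subsolution. Together with $u^N=0$, induction gives $u^n\ge 0$.

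\textbf{Upper bound.} Take $W^n_i=\tilde u(g_i,t_n)+\theta$ with a constant $\theta\ge 0$. The obstacle branch is exact, since $W_i-W_{-i}=\alpha(t_n)g_i\ge g_i\alpha(t_n)-c$, and this holds for any $\theta$. For the equation branch, consistency (Lemma~\ref{lem:consistency}) gives
\[
[\mathcal{M}W^n-\mathcal{N}W^{n+1}]_i=-\mathcal{L}^\nu\tilde u(g_i,t_n)+r\theta+\mathcal{E}_i(h).
\]
Lemma~\ref{lem:super-pide} gives $-\mathcal{L}^\nu\tilde u\ge 0$.

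\textbf{Main obstacle: the truncation error $\mathcal{E}_i(h)$.} This error must be nonnegative, or be absorbed by $r\theta$, uniformly in $i$ and with $\theta$ independent of $h$. The local derivative errors involve $\tilde u_{tt}$, $\rho g\,\tilde u_{gg}$ and $\tilde u_{gggg}$, and these are bounded uniformly, because $f''$ decays like $|g|^{-3}$. The quadrature error of the jump term, however, may grow linearly in $|g_i|$.

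I would first try to exploit the strict inequality in \eqref{eq:growth_cond}. In Lemma~\ref{lem:super-pide} there is slack: choosing $k^*$ slightly larger makes the deficit $D(g;M,k^*)\le-\delta|g|+D_{\max}$ for some $\delta>0$. Enlarging $K$ then gives
\[
-\mathcal{L}^\nu\tilde u\ge\alpha(t)\,\delta'(1+|g|).
\]
This margin absorbs an error of size $o(1)(1+|g|)$ for $h$ small. The factor $\alpha(t)$ vanishes at $T$, which is a difficulty near maturity. To handle it I would replace $\alpha(t)$ by $\alpha(t)+\epsilon_0(T-t)$, whose derivative term $-\epsilon_0 f$ supplies a positive margin all the way up to $T$; the terminal value stays at $0$.

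\textbf{Conclusion.} Induction from $W^N\ge 0=u^N$ gives $0\le u^n_i\le\tilde u(g_i,t_n)+\theta$. Since $\tilde u\le\|\alpha\|_\infty(k^*\sqrt{g^2+M}+|g|/2+K)$, this yields $u^n_i\le C(1+|g_i|)$ with $C$ independent of $h$.
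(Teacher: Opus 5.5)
Your skeleton matches the paper's proof: the barrier $\tilde u$ of Lemma~\ref{lem:super-pide}, backward induction, and a contradiction at a positive maximum of $U^n-W$ using the row sums $\tfrac1{\Delta t}+r$ of $\mathcal M$ and $\mathcal N\ge 0$. The paper skips your reflection step, because $\tilde u_i-\tilde u_{-i}-\psi_i=c>0$ exactly, so $u_i-u_{-i}-\psi_i\ge c$ at any maximizer. Your reflection argument is nonetheless correct and more general.

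You are right that the paper's claim of a uniformly small consistency error, absorbed by enlarging $K$, is too quick. However, the remedy you propose at the end breaks the barrier. Replacing $\alpha$ by $\hat\alpha=\alpha+\epsilon_0(T-t)$ changes the antisymmetric part of $W$: now $W_i-W_{-i}=\hat\alpha(t_n)g_i$. The obstacle inequality $W_i-W_{-i}\ge\alpha(t_n)g_i-c$ therefore becomes $\epsilon_0(T-t_n)g_i\ge -c$, which fails for every $g_i<-c/(\epsilon_0(T-t_n))$. You verified that the obstacle branch is exact only for the unmodified $\tilde u$. The obstacle branch of your one-step comparison subtracts $W_i-W_{-i}\ge\psi_i$, so the modified $W$ is no longer a discrete supersolution and the argument breaks down on a grid unbounded in $g$. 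On a grid truncated at $g_{\max}$ it survives only if $\epsilon_0\le c/(Tg_{\max})$, and in that case a margin growing linearly in $|g|$ was never needed.

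The modification is also unnecessary. Evaluate the scheme on $W=\tilde U+\theta$ exactly, rather than through the continuous operator.
\begin{itemize}
\item The time difference equals $\frac{\alpha(t_n)-\alpha(t_{n+1})}{\Delta t}f(g_i)\ge 0$ and can be dropped, just as the proof of Lemma~\ref{lem:super-pide} drops the $-\alpha'$ term.
\item Since $0\le\alpha(t_{n+1})\le\alpha(t_n)$, the explicit jump term satisfies $-\alpha(t_{n+1})\mathcal I^h_i[f]\ge-\alpha(t_n)\bigl(\mathcal I^h_i[f]\bigr)^+$.
\item The bounds on the jump integral used in Lemma~\ref{lem:super-pide} are nonnegative, so they also bound its positive part.
\end{itemize}
Hence every remaining error is a spatial consistency error of the time-independent $f$, multiplied by $\alpha(t_n)$. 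Your margin $\alpha(t_n)\delta'(1+|g_i|)$ absorbs these for small $h$, and nothing special is needed near $T$.

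If you want a margin all the way to $T$ anyway, add an even term such as $\epsilon_0\bigl(e^{\Lambda(T-t)}-1\bigr)\sqrt{g^2+M}$ with $\Lambda$ large. It vanishes at $T$, its time derivative dominates the rest of the operator, and it leaves $W(g)-W(-g)=\alpha(t)g$ intact.

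Finally, your statement that $\tilde u_{tt}$ is uniformly bounded is wrong. In fact $\tilde u_{tt}=\alpha''(t)f(g)$ grows linearly in $|g|$, and it requires $\alpha\in C^2$ while only $C^1$ is assumed. The sign argument above avoids this term altogether.
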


\begin{proof}
We use the global discrete comparison principle via barrier functions. Let $U^n = \{u_i^n\}_{i=-J}^J$ denote the numerical solution vector at time $t_n$ on a finite, symmetric computational grid $\mathcal{G}_h = -\mathcal{G}_h$. Let $\tilde{U}$ be the continuous global supersolution defined by
$$
\tilde{u}(g, t) := \alpha(t)(k\sqrt{g^2+M} + g/2 + K).
$$
Due to $\tilde{u} \in C^{2,1}$ grows at most linearly, its spatial gradient $\tilde{u}_g$ is globally bounded, and its higher-order spatial derivatives ($\tilde{u}_{gg}, \tilde{u}_{ggg}$) decay asymptotically. Therefore, the Taylor truncation error associated with consistency (Lemma \ref{lem:consistency}) is uniformly bounded. We therefore have
\[
S(h, g_i, t_n, \tilde{U}^n, \tilde{U}^{n+1}) = \min\bigl(-\mathcal{L}^\nu \tilde{u}(g_i, t_n), \tilde{u}_i^n - \tilde{u}_{-i}^n - \psi_i^n\bigr) + \mathcal{O}(h).
\]
By choosing the constant $K$ strictly larger than the minimal requirement of the continuous deficit function, the continuous operator $-\mathcal{L}^\nu \tilde{u}$ is uniformly positive. Furthermore, the test function yields exactly $\tilde{u}(g,t) - \tilde{u}(-g,t) = \alpha(t)g$. Because the grid is symmetric, the index $-i$ is exactly on the grid, ensuring the discrete obstacle gap evaluates algebraically to precisely $\alpha(t)g_i - (\alpha(t)g_i - c) = c > 0$ globally. Thus, there exists a sufficiently small grid spacing $h_0$ such that for all $h < h_0$, $S(h, g_i, t_n, \tilde{U}^n, \tilde{U}^{n+1}) \ge 0$ at all interior nodes. 

We argue by backward induction in time. At the terminal step $t_N = T$, $U^N = 0 \le \tilde{U}^N$. Assume the bound holds at the forward step,i.e. $U^{n+1} \le \tilde{U}^{n+1}$. The numerical scheme solves $S(h, g_i, t_n, U^n, U^{n+1}) = 0$. The strict CFL condition established in Lemma \ref{lem:monotonicity} ensures that all explicit forward-time weights in the scheme are non-negative. Thus, the operator $S$ is non-increasing with respect to the forward vector $U^{n+1}$. Replacing $U^{n+1}$ with the larger supersolution $\tilde{U}^{n+1}$  decreases the operator value
\begin{equation}\label{eq:induct_step}
S(h, g_i, t_n, U^n, \tilde{U}^{n+1}) \le S(h, g_i, t_n, U^n, U^{n+1}) = 0.
\end{equation}
To deduce $U^n \le \tilde{U}^n$ globally, assume for contradiction that the maximum positive difference $\delta := \max_j (u_j^n - \tilde{u}_j^n)$ is strictly positive ($\delta > 0$). Assuming compatible numerical bounds at the truncated edges $j = \pm J$, this strictly positive maximum must be attained at some interior node $i \in (-J, J)$. Thus, $u_i^n = \tilde{u}_i^n + \delta$, and for all other nodes, $u_j^n \le \tilde{u}_j^n + \delta$.

We evaluate the discrete operator components at this interior maximum node $i$. By the M-matrix properties of the scheme, $S_{\mathcal{L}}$ is non-increasing with respect to the off-diagonal terms $u_j^n$ ($j \neq i$). Replacing the off-diagonal terms with their upper bounds $\tilde{u}_j^n + \delta$ strictly maintains or decreases the operator value
\[
S_{\mathcal{L}}(U^n, \tilde{U}^{n+1})_i \ge S_{\mathcal{L}}(\tilde{U}^n + \delta \mathbf{1}, \tilde{U}^{n+1})_i.
\]
Because the linear operator $S_{\mathcal{L}}$ discretizes spatial derivatives and jump integrals as conservative linear combinations of differences, shifting the current-time vector by a global constant $\delta$ mathematically annihilates the discrete spatial and integral operators perfectly. The only surviving terms are the zero-order coefficients: the backward time-difference $\frac{1}{\Delta t}u_i^n$ and the strictly positive discount rate $r u_i^n$. This exact algebraic separation yields
\[
S_{\mathcal{L}}(\tilde{U}^n + \delta \mathbf{1}, \tilde{U}^{n+1})_i = S_{\mathcal{L}}(\tilde{U}^n, \tilde{U}^{n+1})_i + \left(\frac{1}{\Delta t} + r\right)\delta > S_{\mathcal{L}}(\tilde{U}^n, \tilde{U}^{n+1})_i.
\]
Since $\tilde{U}^n$ is an interior supersolution, then $S_{\mathcal{L}}(\tilde{U}^n, \tilde{U}^{n+1})_i \ge 0$ implies $S_{\mathcal{L}}(U^n, \tilde{U}^{n+1})_i > 0$.

Next, we evaluate the obstacle constraint at the interior maximum node $i$. The obstacle operator is strictly non-increasing with respect to the off-diagonal term $u_{-i}^n$ due to its negative coefficient. Substituting the exact value $u_i^n = \tilde{u}_i^n + \delta$ and the upper bound $u_{-i}^n \le \tilde{u}_{-i}^n + \delta$, the uniform shift $\delta$ cancels perfectly 
\[
S_{obs}(U^n)_i \ge (\tilde{u}_i^n + \delta) - (\tilde{u}_{-i}^n + \delta) - \psi_i^n = \tilde{u}_i^n - \tilde{u}_{-i}^n - \psi_i^n = c > 0.
\]
Since both components are strictly positive, their minimum is strictly positive, meaning  $$S(h, g_i, t_n, U^n, \tilde{U}^{n+1}) > 0.$$
However, this  contradicts the induction bound \eqref{eq:induct_step}, which requires $S \le 0$ at all interior nodes. Therefore, $\delta > 0$ must be false, proving $U^n \le \tilde{U}^n$ globally.

For the lower bound, we utilize the trivial smooth barrier $\underline{U} \equiv 0$. The continuous linear operator evaluates to exactly zero: $S_{\mathcal{L}}(0,0) = 0$. The minimum of zero and any obstacle value is unconditionally non-positive, ensuring $S(h, g_i, t_n, 0, 0) \le 0$ uniformly. By identical backward induction logic (driving a contradiction via the negative shift $\left(\frac{1}{\Delta t} + r\right)\delta < 0$), we  provide $U^n \ge 0$ everywhere.

 By definition, the numerical scheme  enforces $\min\bigl(S_{\mathcal{L}}(U^n), \, u_i^n - u_{-i}^n - \psi_i^n\bigr) = 0$. This universally implies $u_i^n - u_{-i}^n - \psi_i^n \ge 0$. Because we established $u_{-i}^n \ge 0$, it strictly follows that $u_i^n \ge \psi_i^n$. 
Since  $\alpha(t)$ is non-increasing, then the test function is strictly bounded by 
$$
\tilde{u}(g_i, t_n) \le \alpha(0)(k(|g_i|+\sqrt{M}) + |g_i|/2 + K) \le C(1+|g_i|),
$$
for a sufficiently large finite constant $C$. Therefore, the numerical solution enjoys the exact bounds $\max(0, \psi_i^n) \le u_i^n \le C(1+|g_i|)$. This confirms the unconditional $L^\infty$-weighted stability of the scheme.
\end{proof}

\subsection{Convergence Result}

Having shown that the above scheme is stable, monotone, and consistent, we now prove the convergence result, which links the discrete approximations to the viscosity framework of Sections \ref{sec:viscosity} and \ref{sec:comparison}.

\begin{theorem}[Convergence]\label{thm:convergence_final}
Assume the parameters of the L\'evy jump-diffusion satisfy \eqref{eq:param} and the growth condition \eqref{eq:growth_cond}. As $h \to 0$, the discrete approximation $u^h$ converges locally uniformly on $\mathbb{R} \times [0, T]$ to the unique viscosity solution $u$ of the non-local PIDE \eqref{eq:bubble-pide}.
\end{theorem}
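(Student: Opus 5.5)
We will follow the Barles--Souganidis half-relaxed limit strategy, using Lemmas \ref{lem:monotonicity}, \ref{lem:consistency}, \ref{lem:stability} together with the comparison principle, Theorem \ref{thm:comp-pide}. By Lemma \ref{lem:stability} the family $\{u^h\}$ satisfies $0\le u^h(g,t)\le \tilde u(g,t)\le C(1+|g|)$ uniformly in $h$, so the half-relaxed limits
\[
\overline{u}(g,t):=\limsup_{\substack{h\to0\\ (g',t')\to(g,t)}} u^h(g',t'),\qquad
\underline{u}^\circ(g,t):=\liminf_{\substack{h\to0\\ (g',t')\to(g,t)}} u^h(g',t')
\]
are finite, $\overline{u}$ is USC, $\underline{u}^\circ$ is LSC, both lie between $\max(0,\psi)$ and $\tilde u$, and in particular have at most linear growth. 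Since $\tilde u(\cdot,T)=0$ and $u^h\ge0$, both limits vanish at $t=T$, which gives the terminal conditions required in Definition \ref{def:visc-pide}.

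The core step is to show that $\overline{u}$ is a viscosity subsolution (the supersolution property of $\underline{u}^\circ$ is symmetric). Take $\varphi\in C^{2,1}$ and a strict global maximum of $\overline{u}-\varphi$ at $(g_0,t_0)$, $t_0<T$; after modifying $\varphi$ away from a neighbourhood, we may assume $\varphi$ has linear growth and $\overline u\le\varphi$ globally, with equality only at $(g_0,t_0)$. Standard arguments give grid points $(g_h,t_h)\to(g_0,t_0)$ maximizing $u^h-\varphi$ on the grid with $\xi_h:=u^h(g_h,t_h)-\varphi(g_h,t_h)\to0$ along a subsequence. Then $u^h\le\varphi+\xi_h$ on the grid, and monotonicity (Lemma \ref{lem:monotonicity}, under the CFL condition) gives
\[
0=S(h,g_h,t_h,u^h(g_h,t_h),u^h)\ \ge\ S(h,g_h,t_h,\varphi(g_h,t_h)+\xi_h,\varphi+\xi_h).
\]
Passing to the $\liminf$ and using consistency (Lemma \ref{lem:consistency}) gives $\min(-\mathcal L^\nu\varphi,\ \varphi-\varphi(-\cdot)-\psi)(g_0,t_0)\le0$. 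To convert this into \eqref{eq:visc-sub}, where the nonlocal term and the obstacle use $\overline u$ rather than $\varphi$, we apply the test with the Barles--Imbert splitting: on $|\gamma|<\delta$ we use $\varphi$, and on the exterior we keep $u^h$ itself. For the exterior we use the reverse Fatou argument from Proposition \ref{prop:stab-pide}, dominated by $C(1+|g+\gamma|)$, and $\limsup u^h(g_h+\gamma,t_h)\le \overline u(g_0+\gamma,t_0)$. The obstacle term is handled the same way via $\limsup u^h(-g_h,t_h)\le\overline u(-g_0,t_0)$, which only helps the inequality. Finally we let $\delta\to0$.

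Once both properties hold, Theorem \ref{thm:comp-pide} gives $\overline u\le\underline u^\circ$; the reverse inequality holds by construction. Hence $\overline u=\underline u^\circ=:u$ is continuous. By Theorem \ref{thm:exist-pide} and Corollary \ref{cor:unique}, $u$ is the unique viscosity solution. Equality of the half-relaxed limits then yields local uniform convergence of $u^h$ on compacts of $\mathbb R\times[0,T]$ by the usual Dini-type argument.

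\textbf{Expected main obstacle.} The delicate point is the consistency-with-exterior step: the scheme uses the quadrature $\mathcal I^h$ and the truncated grid, and the exterior values $u^h(g_h+\gamma_k,t_{h}+\Delta t)$ sit at the next time level and at off-grid points, so they require interpolation. The plan is to bound these values uniformly by the linear-growth envelope. We then combine the quadrature's weak convergence to $\nu$ on $\{|\gamma|\ge\delta\}$, which is a finite measure with a $\nu$-null boundary, with the upper semicontinuity of $\overline u$, so that a Fatou/Portmanteau-type inequality applies. The effects of truncating the computational domain are handled by noting that the boundary sits at distance $\to\infty$ and that the barriers $\max(0,\psi)\le u^h\le\tilde u$ hold there.
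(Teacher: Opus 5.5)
Your proposal is correct and follows essentially the same route as the paper. Both use half-relaxed limits controlled by the stability lemma, and both apply monotonicity at a strict maximum to a grid function equal to $\varphi+\xi_h$ near the contact point and to $u^h$ elsewhere (the Barles--Imbert splitting); the exterior quadrature and the reflected obstacle value are handled by reverse Fatou with the linear-growth bound, and the comparison principle then gives $\overline{u}=\underline{u}^\circ$. The paper's proof leaves implicit some details you supply, but they do not change the argument: the squeeze $0\le\underline{u}^\circ(\cdot,T)\le\overline{u}(\cdot,T)\le\tilde u(\cdot,T)=0$, which gives the pointwise terminal conditions the comparison principle needs, and your care about the next-time-level interpolated exterior values and the domain truncation.
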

\begin{proof}
Define the upper and lower relaxed semicontinuous envelopes of the numerical solutions,
\[
\bar{u}(g, t) := \limsup_{\substack{h \to 0 \\ (g', t') \to (g, t)}} u^h(g', t'), \quad \underline{u}(g, t) := \liminf_{\substack{h \to 0 \\ (g', t') \to (g, t)}} u^h(g', t').
\]
By the stability of Lemma \ref{lem:stability}, $\bar{u}$ and $\underline{u}$ are finite and lie in the linear growth class $\mathcal{O}(|g|)$, and $\underline{u}(g, t) \le \bar{u}(g, t)$ by definition.

Let $\varphi \in C^{2,1}$, and let $(g_0, t_0)$ with $t_0 < T$ be a strict global maximum of $\bar{u} - \varphi$. Without loss of generality, $\bar{u}(g_0, t_0) = \varphi(g_0, t_0)$ and $\bar{u} \le \varphi$.

By the standard properties of relaxed limits, there are $h_n \to 0$ and points $(g_n, t_n) \to (g_0, t_0)$ such that $(g_n, t_n)$ is a global maximum of $u^{h_n} - \varphi$ and $u^{h_n}(g_n, t_n) \to \bar{u}(g_0, t_0)$. Set $\xi_n := u^{h_n}(g_n, t_n) - \varphi(g_n, t_n) \to 0$. The maximum property gives $u^{h_n} \le \varphi + \xi_n$ on the grid $\mathcal{G}_{h_n}$.
Since $u^{h_n}$ solves the scheme
\[
S\bigl(h_n, g_n, t_n, u^{h_n}(g_n, t_n), u^{h_n}\bigr) = 0.
\]
To handle the non-local operator we use the Barles--Imbert splitting \cite{BI08}. Since the integral evaluates the function over the whole domain, the local bound by $\varphi$ is not enough. Fix a splitting radius $\delta > 0$ such that the boundary of $B_\delta(g_0, t_0)$ carries zero L\'evy measure. Define $w^{h_n}$ to equal $\varphi + \xi_n$ inside $B_\delta(g_n, t_n)$ and $u^{h_n}$ outside. Since $u^{h_n} \le \varphi + \xi_n$, we have $u^{h_n} \le w^{h_n}$.

By monotonicity (Lemma \ref{lem:monotonicity}) and $u^{h_n} \le w^{h_n}$,
\[
S\bigl(h_n, g_n, t_n, \varphi(g_n, t_n) + \xi_n, w^{h_n}\bigr) \le 0.
\]

We take the $\liminf$ as $n \to \infty$. Inside $B_\delta$ the operator evaluates $\varphi$, and by consistency (Lemma \ref{lem:consistency}) the local terms converge.

Outside $B_\delta$ the quadrature evaluates $u^{h_n}$. Since the exterior L\'evy measure is finite and bounded away from the origin, the discrete sum converges to the continuous integral. By the reverse Fatou lemma, with the linear growth bound of Lemma \ref{lem:stability} as dominating function,
\[
\limsup_{n \to \infty} \sum_{|\gamma(g_n, z_k)| \ge \delta} w_k \, u^{h_n}\bigl(g_n + \gamma(g_n, z_k), t_{n+1}\bigr) \le \int_{|\gamma| \ge \delta} \bar{u}(g_0 + \gamma, t_0) \nu(dz).
\]
Since the jump integral is subtracted in the negated operator $-\mathcal{L}^\nu$, this $\limsup$ becomes a $\liminf$ lower bound for the full operator, and the limit gives the Barles--Imbert test formulation \eqref{eq:testop},
\[
\min\bigl(\mathcal{L}^\nu_{\varphi, \delta}(g_0, t_0; \bar{u}),\; \bar{u}(g_0, t_0) - \bar{u}(-g_0, t_0) - \psi(g_0, t_0)\bigr) \le 0.
\]
Thus $\bar{u}$ is a subsolution of the PIDE.

By a symmetric argument with a strict local minimum and the standard Fatou lemma for the lower envelope, $\underline{u}$ is a supersolution.

Since both functions lie in the linear growth class, the comparison principle (Theorem \ref{thm:comp-pide}) gives $\bar{u} \le \underline{u}$ on $\mathbb{R} \times [0, T)$. As $\underline{u} \le \bar{u}$ by definition, $\bar{u}(g, t) = \underline{u}(g, t)$ everywhere.
Hence the discrete solutions $u^h$ converge locally uniformly to the continuous function $u := \bar{u} = \underline{u}$, which is the unique viscosity solution, i.e.\ the bubble value function.
\end{proof}

\section{Simulations}\label{sec:simulations}
    
    We illustrate the bubble value function $u(g,t)$ obtained by solving the
    non-local obstacle PIDE \eqref{eq:bubble-pide} numerically, and we test the
    sensitivity of the speculative premium to the choice of driving L\'evy process.
    We consider four specifications spanning finite and infinite jump activity: the
    Merton jump--diffusion, the Variance Gamma (VG) and Normal Inverse Gaussian
    (NIG) infinite-activity models, and the Kou double-exponential model.
    
    \subsection{Discretization and Solver}\label{sec:disc}
    
    The state variable is discretized on a uniform grid
    $g\in[g_{\min},g_{\max}]=[-2,2]$ with $M+1$ nodes, and calendar time on
    $[0,T]$ with $N+1$ nodes; we integrate \emph{backward} from the terminal
    condition $u(g,T)=0$, writing $\tau:=T-t$ for the time to maturity and
    $U^m$ for the vector of interior grid values at $\tau_m := m\,\Delta t$.
    The local spatial part of $-\mathcal{L}^\nu$, namely
    $\rho g\,u_g-\tfrac{\sigma^2}{2}u_{gg}+r\,u$, is discretized implicitly:
    the mean-reversion drift by an upwind difference (which preserves the
    M-matrix structure of the local operator), the diffusion by central
    differences, and the discount on the diagonal, giving a sparse local
    matrix $D$; the Dirichlet data at the edges of the window enter the
    diffusion stencil of the adjacent interior nodes. The non-local term is
    approximated by quadrature of the L\'evy
    density on a truncated symmetric jump grid
    $z\in[-z_{\max},z_{\max}]\setminus(-\epsilon,\epsilon)$ with $z_{\max}=3$ and
    $\epsilon=0.05$; the post-jump state $g+\gamma(g,z)$ is mapped back to the grid
    by linear interpolation, producing a jump matrix $J$ with non-negative
    off-diagonal entries. The jumps are
    \emph{state-dependent} and proportional to the disagreement level,
    $\gamma(g,z)=c_J\,g\,z$, exactly as in the Kou example of
    Section~\ref{sec:existence}. One step of the march solves the linear
    complementarity problem
    \[
      \min\bigl(A\,U^{m+1}-U^{m},\;U^{m+1}-\Phi^{m+1}\bigr)=0,\qquad
      A:=I+\Delta t\,(D-J),
    \]
    with the \emph{coupled} obstacle
    $\Phi^{m+1}(g_i)=u(-g_i,\tau_{m+1})+\psi(g_i,\tau_{m+1})$ evaluated at the
    reflected node and at the \emph{same} time level. Each linear
    complementarity problem is solved by Projected Successive Over-Relaxation
    (PSOR), and the time-march is wrapped in an
    outer Picard iteration that updates the reflected value $u(-g,\cdot)$, repeated
    until $\lVert U^{(k+1)}-U^{(k)}\rVert_\infty<10^{-8}$.

    \begin{remark}[Fully implicit jump treatment]
    In the implementation both the local matrix $D$ and the dense jump
    matrix $J$ are treated implicitly. The matrix $A$ has non-positive
    off-diagonal entries and its diagonal dominance excess is at least
    $1+\Delta t\,r$, so it is a non-singular M-matrix for \emph{every}
    $\Delta t$: the scheme is unconditionally monotone, and the convergence
    framework of Section~\ref{sec:convergence} applies with the explicit part
    of the splitting empty, i.e.\ without the CFL restriction
    \eqref{eq:cfl_condition}.
    \end{remark}
    
    \begin{remark}[Convergence of the coupled iteration]
    The reflected coupling makes the obstacle problem genuinely nonlinear, and
    two implementation details are essential. First, each backward sweep must
    use the \emph{current} sweep's adjacent time level when forming the
    right-hand side, and the outer loop must be iterated to tolerance rather
    than truncated at a fixed small count: an insufficiently converged
    iteration underestimates $u(0,0)$ by more than an order of magnitude while
    leaving the obstacle-dominated peak near $g=g_{\max}$ almost unchanged.
    Second, the reflected value in the obstacle must be taken at the
    \emph{same} time level $\tau_{m+1}$ (from the previous outer iterate).
    Lagging it by one time level still converges, but the limit then violates
    the coupled constraint $u\ge u(-g,\cdot)+\psi$ by an
    $\mathcal{O}(\Delta t)$ margin ($\approx 3\times10^{-4}$ at this
    calibration), which distorts any tolerance-based reading of the contact
    set. With both in place the scheme converges in six outer iterations to
    an update below $10^{-9}$ for every model (Figure~\ref{fig:convergence}).
    \end{remark}
    
    The obstacle uses the BMS time profile evaluated at the time to maturity
    \[
      \alpha(t)=\bar\alpha(T-t),\qquad
      \bar\alpha(\tau):=\frac{1-e^{-(r+\lambda)\tau}}{r+\lambda},
    \]
    so that $\alpha(T)=0$ and $\alpha'(t)\le 0$, in accordance with the
    standing assumptions of Section~\ref{sec:bubble-levy}, and
    $\psi(g,t)=g\,\alpha(t)-c$. The parameters are listed in
    Table~\ref{tab:parameters}; all jump measures are symmetric, so the omitted
    $|z|\le1$ compensator integral $\int\gamma\,\mathbf 1_{|z|\le1}\nu(dz)$ vanishes
    identically and introduces no drift bias.
    
    \begin{table}[htbp]
    \centering
    \begin{tabular}{llc}
    \toprule
    \textbf{Parameter} & \textbf{Symbol} & \textbf{Value} \\ \midrule
    Risk-free / discount rate    & $r$                    & $0.05$ \\
    Mean-reversion rate          & $\rho$                 & $1.0$ \\
    Diffusive volatility         & $\sigma$               & $0.25$ \\
    Resale intensity (in $\bar\alpha$) & $\lambda$        & $1.0$ \\
    Transaction cost             & $c$                    & $0.1$ \\
    Jump-amplitude scale ($\gamma=c_J\,g\,z$) & $c_J$     & $0.1$ \\
    Terminal time                & $T$                    & $3.0$ \\
    Merton jump mean \& vol      & $\mu_J,\ \sigma_J$     & $0.0,\ 0.1$ \\
    VG parameters                & $C,\ G,\ M_{\!VG}$     & $0.2,\ 5.0,\ 5.0$ \\
    NIG parameters               & $\alpha,\ \beta,\ \delta$ & $15.0,\ 0.0,\ 0.5$ \\
    Kou parameters               & $\lambda_J,\ p,\ \eta_1,\ \eta_2$ & $1.0,\ 0.5,\ 10,\ 10$ \\
    Time / space grid            & $N,\ M$                & $200,\ 300$ \\
    Computational domain         & $[g_{\min},g_{\max}]$  & $[-2.0,\,2.0]$ \\ \bottomrule
    \end{tabular}
    \caption{Model and grid parameters used in the numerical experiments.}
    \label{tab:parameters}
    \end{table}
    
    \subsection{The Four L\'evy Specifications}
    
    To test the scheme across different jump structures, we evaluate the non-local operator for four L\'evy specifications, of both finite and infinite activity.

    The finite-activity \textbf{Merton} model \cite{Merton1976} models rare macroeconomic shocks by normally distributed jumps. Its L\'evy density is bounded at the origin,
    \begin{equation*}
        \nu(dz)=\frac{\lambda}{\sqrt{2\pi\sigma_J^2}} \exp\!\left(-\frac{(z-\mu_J)^2}{2\sigma_J^2}\right)\,dz.
    \end{equation*}
    
    The finite-activity \textbf{Kou} model \cite{Kou2002} replaces the Gaussian jumps by an asymmetric double-exponential law, with heavier tails. Its density is
    \begin{equation*}
        \nu(dz)=\lambda\bigl(p\,\eta_1 e^{-\eta_1 z}\mathbf 1_{\{z>0\}} +q\,\eta_2 e^{\eta_2 z}\mathbf 1_{\{z<0\}}\bigr)\,dz, \quad \text{with } p+q=1.
    \end{equation*}
    
    Among infinite-activity models, the \textbf{Variance Gamma (VG)} model \cite{Madan1998} produces infinitely many small jumps, and its density has a non-integrable singularity at the origin,
    \begin{equation*}
        \nu(dz)=C\bigl(e^{G z}|z|^{-1}\mathbf 1_{\{z<0\}} +e^{-M_{\!VG} z}z^{-1}\mathbf 1_{\{z>0\}}\bigr)\,dz.
    \end{equation*}
    
    The \textbf{Normal Inverse Gaussian (NIG)} model \cite{BarndorffNielsen1997} also has infinite activity, with semi-heavy tails, and gives a further test of the scheme for a singular kernel,
    \begin{equation*}
        \nu(dz)=\frac{\delta\alpha}{\pi}\,e^{\beta z}\,|z|^{-1}K_1(\alpha|z|)\,dz,
    \end{equation*}
    where $K_1$ is the modified Bessel function of the second kind.
    
    As Figure~\ref{fig:densities} shows, these four measures have quite different structure. The finite-activity Merton and Kou densities are bounded, whereas the infinite-activity VG and NIG densities diverge as $z \to 0$. Since the singular densities are not integrable at the origin, the quadrature excludes the shaded region ($|z|\le\epsilon$); the discarded small jumps are not otherwise compensated, and the effect of this truncation is quantified in the discussion at the end of the section.
    
    \begin{figure}[htbp]
    \centering
    \includegraphics[width=0.8\textwidth]{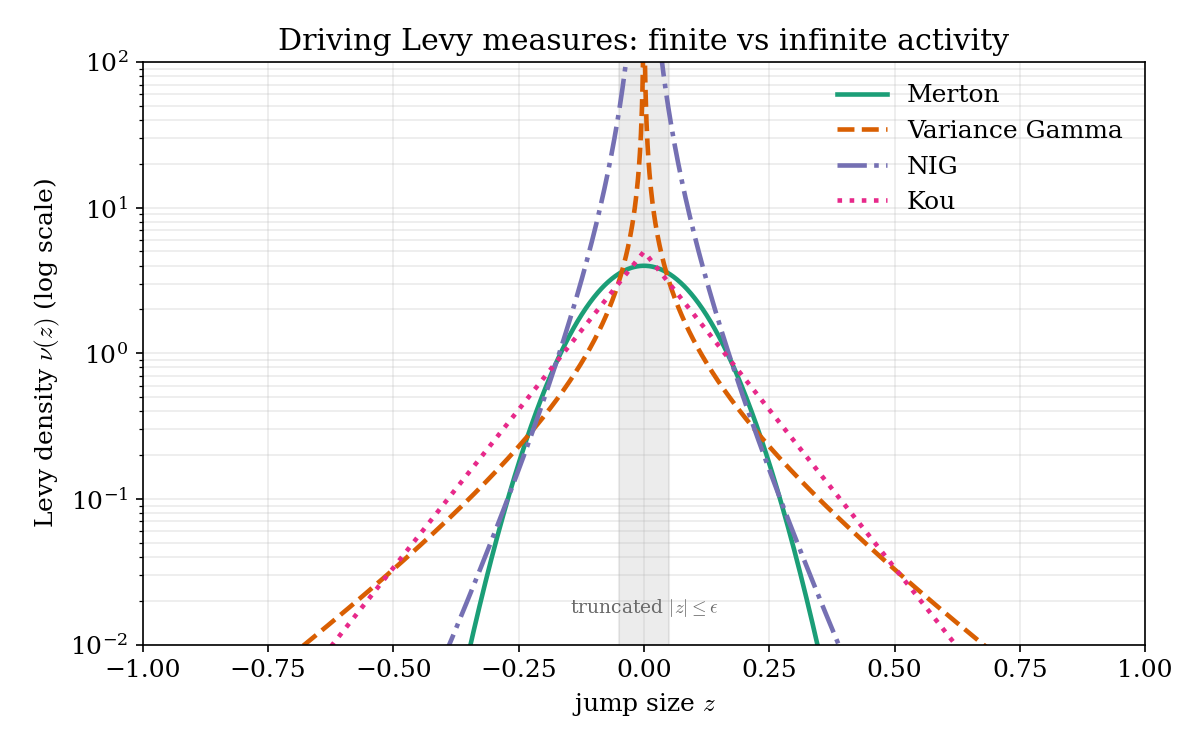}
    \caption{The four driving L\'evy densities $\nu(z)$ on a logarithmic scale. The Merton and Kou densities are bounded at the origin (finite activity); the VG and NIG densities diverge at $z=0$ (infinite activity). The shaded region indicates the domain $|z|\le\epsilon$ excluded from the quadrature.}
    \label{fig:densities}
    \end{figure}
    \subsection{The Bubble Surface and the Free Boundary}
    
    Figure~\ref{fig:surface} shows the computed surface $u_h(g,t)$ (left) and its
    contour map (right) for the representative Merton model. The solution is
    essentially flat and close to zero for $g<0$ and rises monotonically for $g>0$,
    peaking at the upper boundary with $u_h(g_{\max},0)\approx 1.7231$. Only the
    optimistic side ($g>0$) carries a positive resale premium. The exercise
    region is the contact set $\{u = u(-g,\cdot)+\psi\}$; at this calibration
    it is the interval $[g^*(\tau),\,g_{\max}]$, and the optimal exercise
    threshold $g^*(\tau)$ --- the lower edge of the contact set --- is plotted
    in Figure~\ref{fig:free_boundary}. The threshold \emph{decreases} as the
    horizon lengthens, from $g^*\approx 0.75$ at $\tau=0.15$ to a saturation
    value $g^*\approx 0.24$ for $\tau\gtrsim 1.5$: a longer resale horizon
    raises the time profile $\bar\alpha(\tau)$, so reselling becomes optimal
    at smaller levels of disagreement. This mirrors the behaviour of the
    uncoupled threshold $c/\bar\alpha(\tau)$ at which the payoff activates
    (it falls from $0.68$ to $0.11$ over the same range), shifted upward by
    the resale coupling $u(-g,\cdot)$.
    
    \begin{figure}[!htbp]
    \centering
    \includegraphics[width=\textwidth]{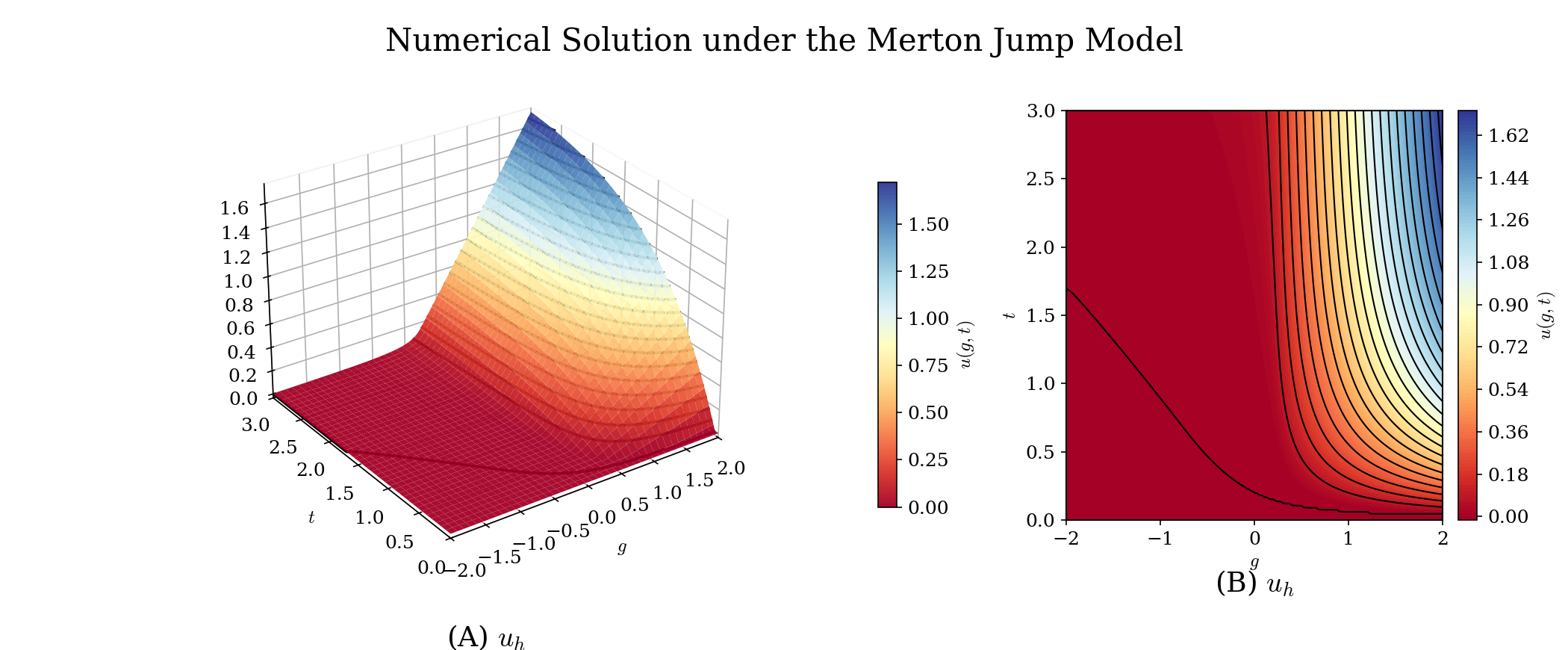}
    \caption{Representative bubble surface (Merton model). Left: $u_h(g,t)$ with the
    vertical axis denoting $\tau=T-t$. Right: contour map. The surfaces for the VG,
    NIG and Kou models are indistinguishable from this one (see
    Section~\ref{sec:robust}).}
    \label{fig:surface}
    \end{figure}
    
    \begin{figure}[!htbp]
    \centering
    \includegraphics[width=\textwidth]{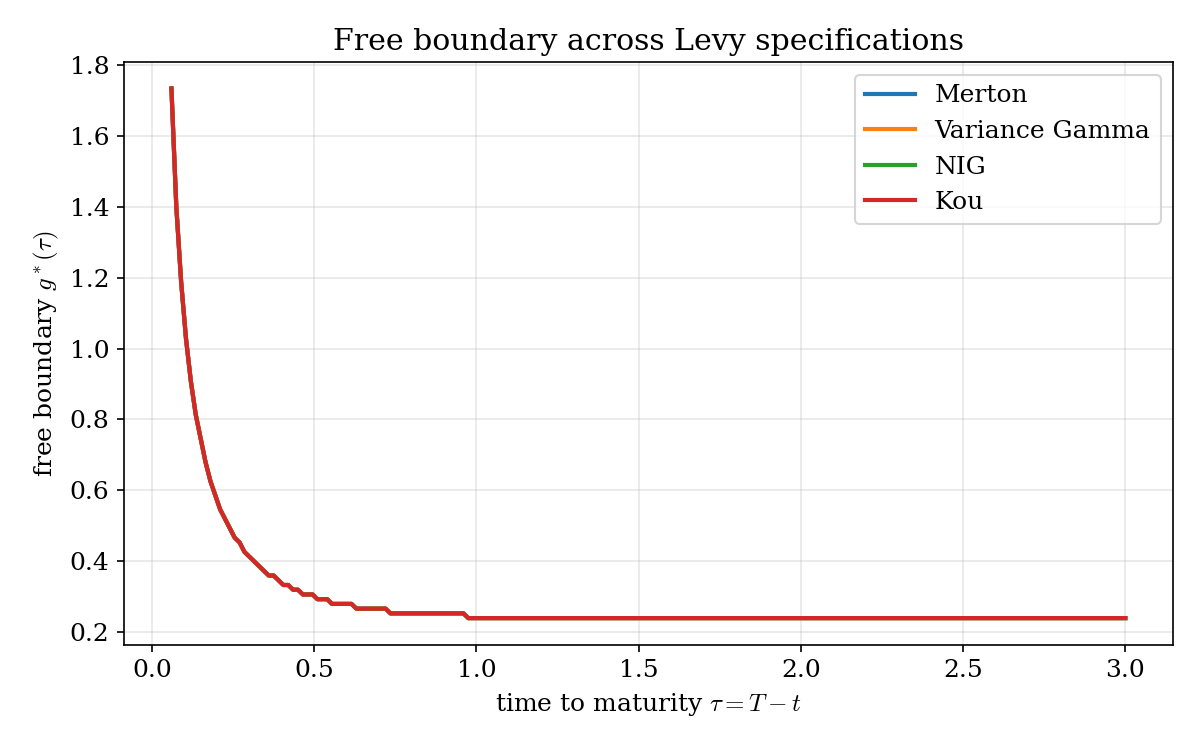}
    \caption{Optimal exercise threshold $g^*(\tau)$, computed as the lower edge
    of the discrete contact set $\{u=u(-g,\cdot)+\psi\}$. The curves for all four L\'evy
    specifications coincide to plotting accuracy.}
    \label{fig:free_boundary}
    \end{figure}
    
    \subsection{Robustness Across the L\'evy Specification}\label{sec:robust}
    
    Despite the pronounced differences between the driving measures
    (Figure~\ref{fig:densities}), the resulting bubble values are almost identical.
    Figure~\ref{fig:slice}(top) overlays the four value functions at $t=0$; the
    curves lie on top of one another. The bottom panel plots the maximum model
    spread $\max_i|u_i(g,0)-u_{\mathrm{Merton}}(g,0)|$ on a logarithmic scale:
    it peaks at only $\sim1.9\times10^{-5}$, at $g\approx0.16$ --- between the
    activation point of the payoff, $c/\bar\alpha(T)\approx 0.11$, and the
    saturated free boundary $g^*\approx 0.24$, where the solution has the
    least regularity --- and is orders of magnitude smaller elsewhere. The full
    space--time structure of the discrepancy is shown in
    Figure~\ref{fig:heatmap}: the disagreement between any model and Merton is
    concentrated in a band around the free boundary and grows
    mildly with the horizon $\tau$, but never exceeds the magnitudes in
    Table~\ref{tab:diffs}.
    
    \begin{figure}[htbp]
    \centering
    \includegraphics[width=\textwidth]{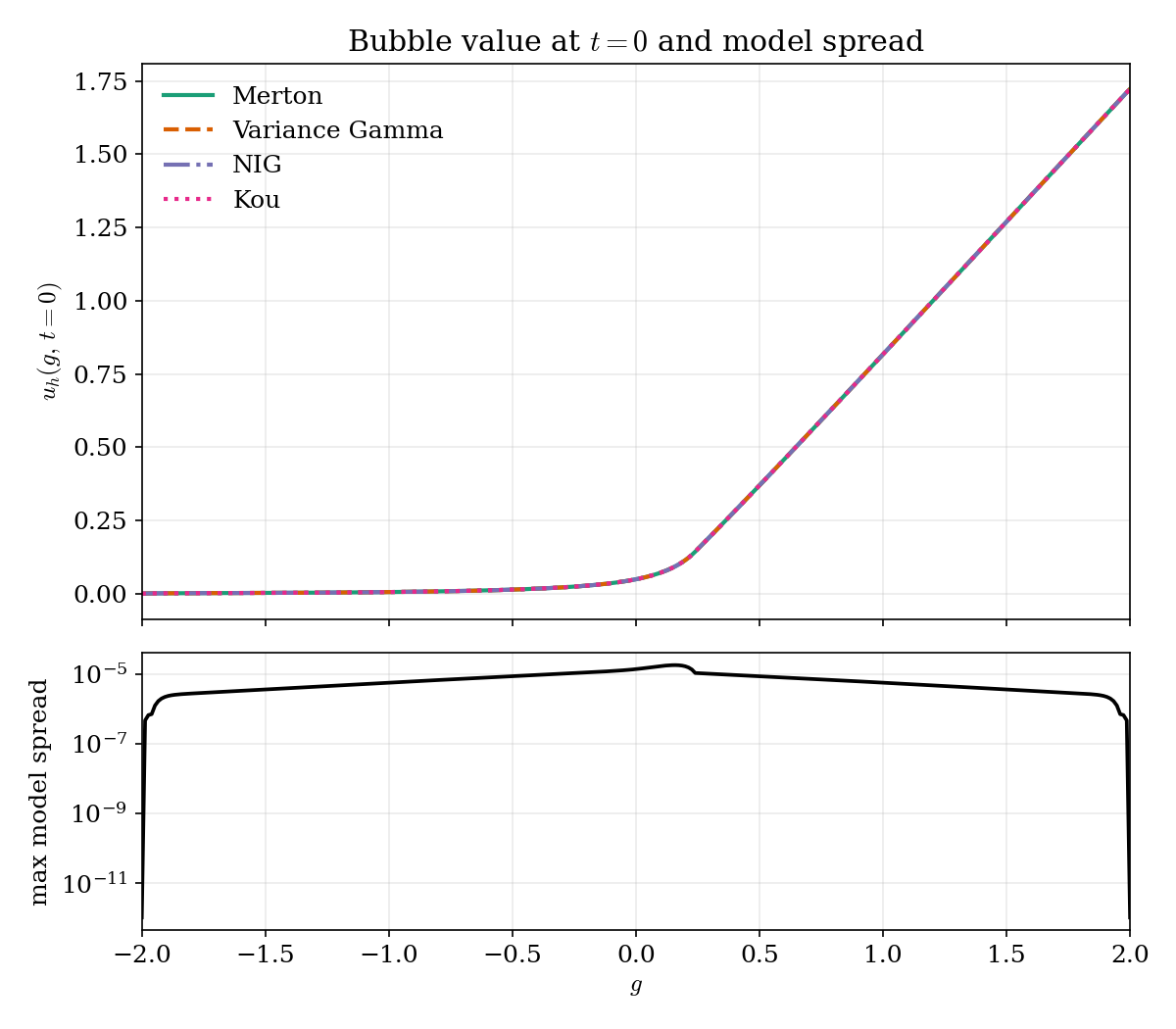}
    \caption{Top: bubble value $u_h(g,0)$ for the four models (curves coincide).
    Bottom: the maximum model spread (log scale), peaking at $\sim1.9\times10^{-5}$
    near the free boundary.}
    \label{fig:slice}
    \end{figure}
    
    \begin{figure}[htbp]
    \centering
    \includegraphics[width=\textwidth]{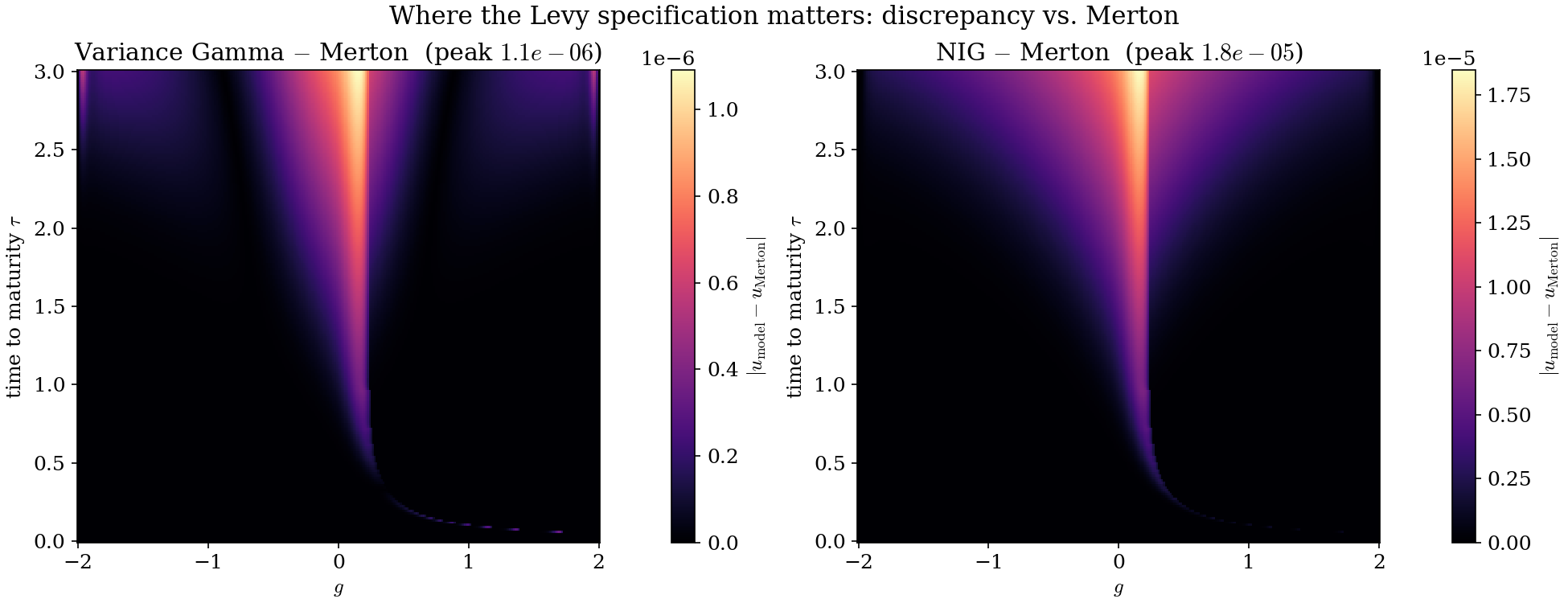}
    \caption{Pointwise discrepancy $|u_{\mathrm{model}}-u_{\mathrm{Merton}}|$ over
    the $(g,\tau)$ grid for VG (left) and NIG (right). Note the colorbar scales
    ($10^{-6}$ and $10^{-5}$): the differences are localized near the free boundary
    and lie far below the discretization accuracy of the scheme.}
    \label{fig:heatmap}
    \end{figure}
    
    \begin{table}[htbp]
    \centering
    \begin{tabular}{lc}
    \toprule
    \textbf{Model pair} & $\displaystyle\max_{g,t}\bigl|u_i-u_j\bigr|$ \\ \midrule
    Merton vs.\ VG   & $1.1\times10^{-6}$ \\
    Merton vs.\ NIG  & $1.8\times10^{-5}$ \\
    Merton vs.\ Kou  & $3.0\times10^{-6}$ \\
    VG vs.\ NIG      & $2.0\times10^{-5}$ \\
    VG vs.\ Kou      & $4.1\times10^{-6}$ \\
    NIG vs.\ Kou     & $1.6\times10^{-5}$ \\ \bottomrule
    \end{tabular}
    \caption{Maximum pointwise discrepancy between the bubble surfaces over the full
    $(g,t)$ grid. All differences lie far below the discretization accuracy of the
    scheme: the discretization biases are common to the four models and cancel in
    the differences.}
    \label{tab:diffs}
    \end{table}
    
    \begin{figure}[htbp]
    \centering
    \includegraphics[width=\textwidth]{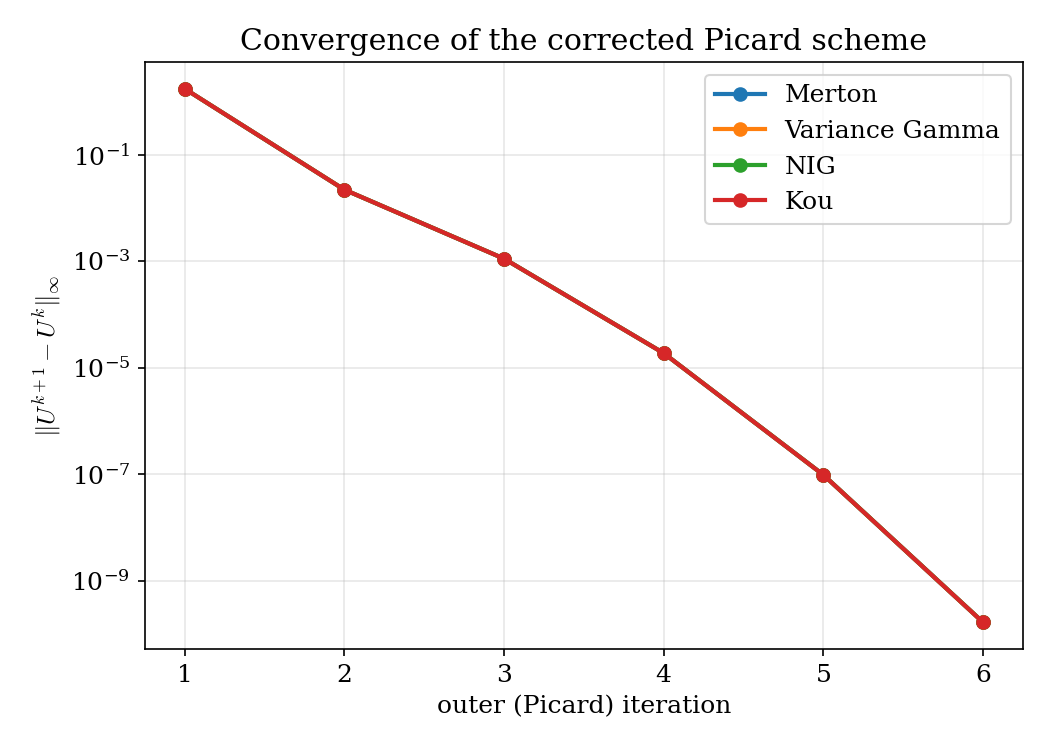}
    \caption{Convergence of the outer Picard iteration: the global update
    $\lVert U^{k+1}-U^{k}\rVert_\infty$ falls below $10^{-9}$ within six
    iterations for every model.}
    \label{fig:convergence}
    \end{figure}

    \subsection{Discussion}
    
    The experiments show that, at this calibration, the bubble value is almost
    insensitive to the fine structure of the
    driving L\'evy measure---finite activity (Merton, Kou) versus infinite activity
    (VG, NIG)---even though the measures
    differ sharply (Figure~\ref{fig:densities}). There are two reasons for this. First, the premium is set by the resale obstacle and the mean-reverting
    diffusion: the peak value is the Dirichlet datum imposed at the truncation
    boundary,
    $u_h(g_{\max},0)=\psi(g_{\max},0)=g_{\max}\,\bar\alpha(T)-c\approx1.7232$,
    independent
    of the jump law, and with $\gamma=c_J\,g\,z$ the discretized jump operator is
    uniformly small relative to the local part
    ($\Delta t\,\lVert J\rVert_\infty$ ranges from $1.2\times10^{-2}$ for VG to
    $6.7\times10^{-2}$ for NIG, against
    $\Delta t\,\lVert D\rVert_\infty\approx 15$). Second, because the jumps scale
    with the state, a post-jump value $g(1+c_Jz)$ leaving the window
    $[g_{\min},g_{\max}]$ is truncated by the quadrature, so the jump contribution
    saturates rather than growing without bound.
    
    The growth condition \eqref{eq:growth_cond} is needed for the existence and uniqueness in Theorem~\ref{thm:exist-pide},
    but the numerical scheme remains well-behaved outside this regime. When the parameters
    violate the condition (large $c$ or small $\eta_i$), the PSOR/Picard solver still
    converges, though more slowly, with more outer iterations.
    The computed functions stay non-negative and obstacle-compliant,
    which suggests that the regularization of the implicit scheme extends beyond the range covered by the theory.
    
    Two modeling caveats accompany this conclusion. The symmetric small-jump
    exclusion $(-\epsilon,\epsilon)$ removes the accumulation of infinitely many
    small jumps that distinguishes VG and NIG from finite-activity models: at
    $\epsilon=0.05$ it discards about $42\%$ of the NIG jump variance
    $\int z^2\,\nu(dz)$ (but only $2.7\%$ for VG and $3.1\%$ for Merton), so the
    infinite-activity models are effectively compared through their truncated
    measures. Resolving the genuine infinite-activity regime would require
    compensating the excluded mass by the state-dependent diffusion correction
    $\sigma_\epsilon^2(g)=\int_{|z|<\epsilon}\gamma(g,z)^2\,\nu(dz)$ and
    refining $\epsilon\to0$. The
    jump law would also exert a visible influence once $c_J$ or the intensity is
    increased so that $\Delta t\,\lVert J\rVert$ becomes comparable to the local
    operator. Within the present calibration, however, the computed value function
    is non-negative, vanishes as $g\to g_{\min}$, grows linearly toward $g_{\max}$
    in agreement with the $\mathcal{O}(|g|)$ uniqueness class of
    Section~\ref{sec:existence}, and reproduces the obstacle-driven free boundary,
    confirming the theoretical predictions and the robustness of the PSOR/Picard
    scheme across both finite- and infinite-activity L\'evy structures.

\bibliographystyle{alpha} 
\bibliography{Bubble-Levy}

\end{document}